\documentclass[10pt,a4paper]{article}

\usepackage{amsmath,amssymb,epsfig}
\usepackage{epsfig}
\usepackage{graphicx}

\numberwithin{equation}{section}


\newtheorem{theorem}{Theorem}[section]
\newtheorem{lemma}[theorem]{Lemma}

\newtheorem{corollary}[theorem]{Corollary}

\newtheorem{definition}[theorem]{Definition}

\newtheorem{remark}[theorem]{Remark}

\newtheorem{facts}[theorem]{Facts}
\newtheorem{claim}[theorem]{Claim}

\newtheorem{observation}[theorem]{Observation}

\newenvironment{proof}{{\bf Proof.}}{}

\begin{document}
\bigskip

\centerline{\bf DISTRIBUTIVE MEREOTOPOLOGY:}

\smallskip

\centerline{\bf Extended  Distributive Contact Lattices}

\smallskip

\centerline{\bf Tatyana Ivanova and Dimiter Vakarelov}

\smallskip

\centerline {Department of Mathematical logic and its
applications,}

\centerline{ Faculty of Mathematics and Informatics, Sofia University}

 \centerline { e-mails:  tatyana.m.ivanova@gmail.com, dvak@fmi.uni-sofia.bg}

\bigskip

  \begin{abstract} The notion of contact algebra is one of the main tools in the region based theory of space. It is an extension of Boolean algebra with an additional relation  $C$ called contact. The elements of the Boolean algebra are considered as formal representations of spatial regions as analogs of physical bodies  and Boolean  operations are considered as operations for constructing new regions from given ones and also to define  some mereological relations between regions as \emph{part-of}, \emph{overlap} and  \emph{underlap}. The contact relation is one of the basic mereotopological relations between regions expressing  some  topological nature. It is used also to define some other  important mereotopological relations like  \emph{non-tangential inclusion},  \emph{dual contact}, \emph{external contact} and others. Most of these definitions are given by means of the operation of \emph{Boolean complementation}. There are, however, some problems related to the motivation of the operation of Boolean  complementation. In order to avoid these problems we propose a generalization of the notion of contact algebra by dropping the operation of complement and replacing the Boolean part of the definition by distributive lattice. First steps in this direction were made in \cite{dmvw1,dmvw2} presenting the notion of \emph{distributive contact lattice} based on contact relation as the only mereotopological relation. In this paper we   consider  as non-definable  primitives  the relations of contact, nontangential inclusion and dual contact, extending considerably the language of distributive contact lattices.  Part I of the paper is devoted to a suitable axiomatization of the new language called \emph{extended distributive contact lattice} (EDC-lattice) by means of universal first-order axioms  true in all contact algebras. EDC-lattices may be considered also as an algebraic tool for certain subarea of mereotopology,  called in this paper \emph{distributive mereotopology.}  The main result of Part I of the paper is a representation theorem, stating that each EDC-lattice can be isomorphically embedded into a contact algebra, showing in this way that the presented axiomatization preserves the meaning of mereotopological relations without considering Boolean complementation. Part II of the paper is devoted to  topological representation theory of EDC-lattices, transferring into the distributive case important  results from the topological representation theory of contact algebras. It is shown that under minor additional assumptions on distributive lattices as \emph{extensionality} of the definable relations of \emph{overlap} or \emph{underlap} one can preserve the good topological interpretations of regions as regular closed or regular open sets in topological space.

  \end{abstract}

\bigskip

 {\bf Keywords:} mereotopology, distributive mereotopology, contact algebras, distributive contact algebras, extended distributive contact algebras, topological representations.

 \section{Introduction}

In this paper we continue the research line started in  the  publications \cite{dmvw1,dmvw2}, devoted to  certain
non-classical approach to the region-based theory of space (RBTS), which roots goes back mainly to Whitehead \cite{W}. In
contrast to the classical Euclidean  approach, in which the notion of \emph{point}
is taken as one of the basic primitive notions in geometry and geometric figures are considered as sets of points, RBTS
 adopts as  primitives the more realistic spatial notion
of region (as an abstraction of  spatial or physical   body),
together with some basic relations and operations on regions. Some
of these relations come from mereology (see \cite{S}): e.g., \emph{part-of} ($x\leq
y$), \emph{overlap} ($xOy$), its dual \emph{underlap} ($x\widehat{O}y$), and some others
definable in terms of these. RBTS
extends classical mereology by considering some new relations
among regions which are topological in nature,  such as \emph{contact}
($xCy$), \emph{nontangential part-of} ($x\ll y$), \emph{dual contact} ($x\widehat{C}y$),  and some others
definable  by means of the contact and part-of relations.
This is one of the reasons that the extension of mereology with
these new relations is commonly called \emph{mereotopology}.  There is no clear difference in the literature between RBTS and mereotopology, and by some authors  RBTS  is related rather to the so called \emph{mereogeometry}, while mereotopology is considered only as a kind of point-free topology, considering mainly topological properties of things. In this paper we consider all these names almost as synonyms representing collections of various point-free theories of space. According to Whitehead the point-free approach to space should not disregard points  at all   - on the contrary, they are suitable high level abstractions which, as such,  should not be put on the base of the theory, but  have to be definable  by means of the other primitive notions of the theory. The Whitehead's criticism is based on the fact that points, as well as the other primitive notions in Euclidean geometry like lines and planes, do not have separate existence in reality, while for instance, spatial bodies as cubes, prisms, pyramids, balls, etc are things having analogs in reality.    In this sense the point-free approach to space  can be  considered as certain equivalent re-formulation of the classical point-based approach by means of  more realistic primitive notions.

Survey papers about  RBTS (and mereotopology) are \cite{V,BD, HG}  (see also the handbook \cite{A} and \cite{BTV} for some logics of space). Let us mention that in a sense   RBTS had been  reinvented in computer science, because of its more simple way of representing qualitative spatial information and in fact it initiated a special field in Knowledge Representation (KR) called \emph{Qualitative Spatial Representation and Reasoning} (QSRR). One of the most popular systems in QSRR is the   \emph{Region Connection Calculus} (RCC)  introduced in  \cite{RCC}. Note that RCC  influenced  various investigations in the field both of theoretical and applied nature. Survey papers about applications of RBTS and mereotopology in various applied areas are, for instance,  \cite{CR} and the book \cite{Hazarika}.

Let us note that one  of the main  algebraic tools in mereotopology is the notion of \emph{contact algebra}, which appears in the literature under different names and formulations  as extensions of Boolean algebra with some mereotopological relations  \cite{deVries,Stell,vdb,VDDB, BD,dw,dv,DuV}. The simplest system,  called just \emph{contact algebra} was introduced in \cite{dv} as  an extension of Boolean algebra $\underline{B}=(B, 0, 1, ., +, *)$ with a binary  relation $C$ called \emph{contact} and  satisfying several simple axioms:

\begin{quote}
(C1)  If $aCb$, then $a\not=0$ and $b\not=0$,

(C2) If $aCb$ and $a\leq a'$ and $b\leq b'$, then $a'Cb'$,

(C3)  If $aC(b+c)$, then $aCb$ or $aCc$,

(C4)    If $aCb$, then $bCa$,

(C5)   If $a.b\not=0$, then $aCb$.
\end{quote}

 The elements of the Boolean algebra are called   regions and the Boolean operations can be considered as some constructions of new regions by means of given ones. In this definition Boolean algebra stands for the mereological component, while the contact relation $C$  stands for the mereotopological component of the system. For instance the mereological relations  overlap $O$, underlap (dual overlap) $\widehat{O}$ and part-of $\leq$ have the following  definitions:  $aOb\leftrightarrow_{def} a.b\not=0$, $a\widehat{O}b\leftrightarrow_{def} a+b\not=1$ and  $\leq $ is just the lattice ordering. The unite element 1 is the region containing as its parts all regions, and the zero region $0$ symbolize the non-existing region and can be used to define the ontological predicate of existence: $a$ \textbf{exists } $\leftrightarrow_{def} a\not=0$. According to these definitions the axiom (C1) says that if $a$ and $b$ are in a contact then they exist, and axiom (C5) says that overlapping regions are in a contact.

By means of the contact relation one can define other mereotopological relations:  dual contact $a\widehat{C}b \leftrightarrow_{def} a^{*}Cb^{*}$, non-tangential part-of $a\ll b \leftrightarrow_{def} a\overline{C}b^{*}$, and some others.

Intuitively if we consider regions as certain sets of points, then contact $aCb$ means that $a$ and $b$ share a  common point, part-of  $a\leq b$ means that all points of $a$ are points of $b$, overlap $aOb$ means that $a$ and $b$ share an existing region (just $a.b\not=0$ is a part both of $a$ and of $b$), underlap $a\widehat{O}b$ means that there exists a non-universal region containing  both $a$ and $b$ (just $a+b\not=1$ contains both $a$ and $b$).

Let us note that standard  model  of Boolean algebra is the algebra of subsets of a given universe, so in such a model regions are pure sets and the mereological relations between regions are just the Boolean relations between sets. In this model one can not distinguish boundary and internal points of a given region and hence it can not  express all kinds of contact, for instance, the so called  \emph{external contact}  in which the contacting regions share only a boundary point (external contact is definable by the formula $aCb \land a\overline{O}b$). For this reason standard point models of contact algebras are of topological nature and consist of the Boolean algebras of \emph{regular closed sets}  in a given  topological space and the contact between two such sets means that they have a common point. Another topological model of contact algebra is the Boolean algebra of \emph{regular open sets} of a topological space, but in this model contact is not so intuitive and is definable by the formula: $aCb\leftrightarrow_{def} Cl(a)\cap Cl(b)\not=\varnothing$, where $Cl(a)$ is the topological closure operation. Let us mention that the topological representation theory of contact algebras  can be treated just as a realization of the Whitehead's idea of defining points and of recreation the point-based structure of the corresponding kind of space within a point-free system (see, for instance, the surveys \cite{V,BD}).

One of the motivations to put Boolean algebra on the base of the notion of contact algebra is based on the remark given  by Tarski (see for this \cite{S}, page 25) that one of the most popular mereological systems, namely the system of Lesniewski, can be identified with the complete Boolean algebra with zero deleted. If we are not interested in infinite unions and intersections then we can accept just Boolean algebra (with zero considered as \emph{non-existing region}, as mentioned above). In the papers \cite{dmvw1,dmvw2} a generalization of the notion of contact algebra is presented just by replacing the Boolean algebra by means of a (bounded) distributive lattice and obtaining in this way the notion of \emph{distributive contact lattice}. Some  motivations for this generalization are the following. First,  that Boolean algebra is a bounded distributive lattice and that the axioms of the contact relation do not use the operation of Boolean complementation $^{*}$ and have the same formulation in the language of bounded distributive lattice. Second, that the same can be said for the  basic mereological relations part-of, overlap and underlap - they have definitions in the language of distributive lattice without the operation of Boolean complement. Third, that the representation theory for distributive lattices is quite similar to the corresponding theory of Boolean algebras and we wanted to see if this can help us in  transferring the topological representation theory of contact algebras to the more general  theory of distributive contact lattices, keeping  the topological meaning of regions as regular closed sets. And finally, one philosophical motivation: the meaning of the Boolean complementation $a^*$ is not well motivated: if the region  $a$ represents a physical body, then what kind of body represents $a^*$? In the point-based models this is  "the rest out of  $a$" from the "whole space", the latter identified with the sum of all observed regions, the unit region 1. However, if we extend the area of our observation  we will obtain another unit, and then $a^*$ will be changed. But it is natural to assume that physical bodies should not depend on the area of observation in which they are included. As a result of this generalization, one can see that the paper \cite{dmvw2} generalizes almost all from the topological representation theory of contact algebras  developed for instance in \cite{dw,dv} and even more;  on the distributive case one can see some deep features which can not be observed in the Boolean case. For instance in the Boolean case mereological relations have some hidden properties which in the distributive case are not always fulfilled and have to be postulated explicitly (this is the so called  \emph{extensionality property} for the underlap and overlap relations). However, the obtained generalization in \cite{dmvw1,dmvw2} has some open problems. The mereotopological relations of non-tangential part-of and dual contact in contact algebras have definitions by means of the operation of complementation. However these relations have a  meaning in topological representation of contact algebras which does not depend on the operation of complementation on regular closed sets. Namely, if $a$ and $b$ are regular closed subsets of a topological space $X$, then $a\ll b$ iff $a\subseteq Int(b)$ and $a\widehat{C}b$ iff $Int(a)\cup Int(b)\not=X$, where $Int$ is the topological operation of interior of a set. Thus, it will be interesting to add these relations  as primitives  to the language of distributive contact lattices and to axiomatize them by means of a set of universal first-order axioms and then to extend the topological representation theory from \cite{dmvw2}.  This is one of the main open problems in \cite{dmvw2} which positive solution is subject of the present paper. One of the motivations for  this extension of the language of distributive contact lattice is that in this way we obtain a system with full duality: contact $C$ is dual to the dual contact $\widehat{C}$ and non-tangential part-of $\ll$ is dual to it converse $\gg$ and this symmetry makes possible to obtain proofs by duality.  The obtained new algebraic mereotopological system is named \emph{Extended Distributive Contact Lattice}, EDC-lattice for short. We will consider in the paper the topological representation theory of some axiomatic extensions of EDC-lattices with new  axioms yielding representations in better topological spaces, generalizing in this way the existing representation theory for contact algebras. Since all these investigations form a special subfield of mereotopology based on distributive lattices, we introduce for this subfield a special name  - \emph{distributive mereotopology}, which is included in the title of the present paper. Having in mind this terminology, then the subarea of mereotopology based on Boolean algebras should be named \emph{Boolean mereotopology}. Similar special names for other subfields of mereotopology depending on the corresponding mereological parts also can be suggested: for  instance the mereotopology considered in \cite{HWG,WHG1,WHG2} is based on some non-distributive lattices - hence \emph{non-distributive mereotopology}, and the mereotopological structures considered, for instance, in \cite{NV,F} are pure relational and without any algebraic lattice-structure in the set of regions - hence \emph{relational mereotopology}.

The paper is divided in two parts. Part I is devoted  to the axiomatization of the three mereotopological relations of contact $C$, dual contact $\widehat{C}$ and non-tangential part-of $\ll$ taken as primitives on the base of distributive lattice by means of universal first-order axioms, which remain true in contact algebras. The main result of this part is the abstract notion of \emph{Extended Distributive Contact Lattice} (EDC-lattice) and an embedding theorem of EDC-latices  into  contact algebras, showing in this way that the meaning of the contact, dual contact and non-tangential part-of relations is preserved in the language of EDC-lattices.  The method is based on a certain generalization of the Stone representation theory of distributive lattices \cite{Stone,bd}. As a consequence of the embedding theorem one can consider EDC-lattice also as the universal fragment of contact algbera based on the signature of distributive lattice and mereotopological relations of contact $C$, dual contact $\widehat{C}$ and non-tangential inclusion $\ll$. Relations of EDC-lattices with other mereotopological systems are also considered: EDC-lattices are \emph{relational mereotopological systems} in the sense of \cite{NV}, and the well known RCC-8 system of mereotopological relations is definable in the language of EDC-lattices.

Part II of the paper is devoted to the topological representation theory of EDC-lattices and some of their axiomatic extensions yielding representations in $T_{1}$ and  $T_{2}$ spaces. Special attention is given to dual dense and dense  representations (defined in Section \ref{embedding properties}) in contact algebras of regular closed and regular open subsets of  topological spaces. The method is an extension of the representation theory of distributive contact lattices \cite{dmvw2} and adaptation of some constructions from the representation theory of contact algebras \cite{dv,DuV}. In the concluding Section we discuss some open problems and future plans with applications in qualitative spatial representation and reasoning.


\newpage
\noindent {\bf PART I: EXTENDED DISTRIBUTIVE CONTACT LATTICES: \\AXIOMATIZATION AND EMBEDDING IN CONTACT ALGEBRAS}

\section{Extended distributive contact lattices.\\ Choosing the right axioms }
 \subsection{Contact algebras, distributive contact lattices and extended distributive contact lattices}

 As it was mention in the Introduction, \emph{contact algebra} is  a Boolean algebra $\underline{B}=(B,\leq,0,1,\cdot ,+,*, C)$ with an additional binary relation $C$ called \emph{contact}, and satisfying the following axioms:
 \begin{quote}
(C1)  If $aCb$, then $a\not=0$ and $b\not=0$,

(C2) If $aCb$ and $a\leq a'$ and $b\leq b'$, then $a'Cb'$,

(C3)  If $aC(b+c)$, then $aCb$ or $aCc$,

(C4)    If $aCb$, then $bCa$,

(C5)   If $a.b\not=0$, then $aCb$.
\end{quote}
Let us note that on the base of (C4) we have (C3') $(a+b)Cc$ implies $aCc$ or $bCc$.
\begin{remark}\label{remark1}{\rm Observe that the above axioms are universal first-order conditions on the language of Boolean algebra with the $C$-relation and not containing the Boolean complementation $^*$. This fact says that the axioms of $C$ will be true in any distributive sublattice of $\underline{B}$. $\square$ }
\end{remark}
 The Remark \ref{remark1} was one of the formal motivations for the definition of \emph{distributive contact lattice} introduced in \cite{dmvw1,dmvw2}: the definition  is obtained  just by replacing the underlying Boolean algebra by a bounded distributive lattice $(D,\leq,0,1,+,\cdot)$ and taking for the relation $C$ the same axioms. This makes possible to consider the main standard models of contact algebras, namely the algebras of regular closed or regular open  sets of a topological space, also as the main models for distributive contact lattices, just by ignoring the Boolean complementation $^*$ in this models. This was guaranteed  by Theorem 7 from  \cite{dmvw2} stating that every distributive contact lattice can be isomorphically embedded into a contact algebra, which fact  indicates also that the choice of the set of axioms for distributive contact lattice is sufficient for proving this theorem. Since our main goal in the present paper is to obtain a definition of distributive contact lattice extended with relations of dual contact $\widehat{C}$ and nontangential part-of $\ll$, we will follow here the above  strategy, namely to choose universal firs-order statements for the  relations $C, \widehat{C}, \ll$  as additional axioms which are true in arbitrary contact algebras and which guarantee the embedding into a contact algebra. The obtained algebraic system will be called \emph{extended distributive contact lattice}.  The next definition is a result of several preliminary experiments for fulfilling the above program.

\begin{definition}
\label{EDCL} {\bf Extended distributive contact lattice.} Let $\underline{D}=(D,\leq,0,1,+,\cdot,C,\widehat{C},\ll )$ be a bounded distributive lattice with three additional relations $C, \widehat{C}, \ll$, called respectively \textbf{contact}, \textbf{dual contact} and \textbf{nontangential part-of}. The obtained system, denoted shortly by $\underline{D}=(D,C, \widehat{C}, \ll)$, is called \textbf{extended distributive contact lattice} ( EDC-lattice, for short) if it satisfies the axioms listed below.

Notations: if $R$ is one of the relations $\leq, C, \widehat{C}, \ll$, then its complement is denoted by $\overline{R}$. We denote by $\geq$ the converse relation of $\leq$ and similarly  $\gg$ denotes the converse relation  of $\ll$.

\smallskip

 \textbf{Axioms for $C$ alone:} The axioms (C1)-(C5) mentioned above.

\smallskip

 \textbf{Axioms for $\widehat{C}$ alone:}

\begin{quote}
 $(\widehat{C}1)$ If $a\widehat{C}b$, then $a,b\not=1$,

 $(\widehat{C}2)$ If $a\widehat{C}b$ and $a'\leq a$ and $b'\leq b$, then $a'\widehat{C}b'$,

$(\widehat{C}3)$ If $a\widehat{C}(b\cdot c)$, then $a\widehat{C}b$ or $a\widehat{C}c$,

$(\widehat{C}4)$ If $a\widehat{C}b$, then $b\widehat{C}a$,

$(\widehat{C}5)$ If $a+b\not=1$, then $a\widehat{C}b$.
\end{quote}

\textbf{Axioms for $\ll$ alone:}

\begin{quote}

$(\ll 1)$ $0\ll 0$,

$(\ll 2)$ $1\ll 1$,

$(\ll 3)$ If $a\ll b$, then $a\leq b$,

$(\ll 4)$  If $a'\leq a\ll b\leq b'$, then $a'\ll b'$,

$(\ll 5)$  If $a\ll c$ and $b\ll c$, then $(a+b)\ll c$,

$(\ll 6)$  If $c\ll a$ and $c\ll b$, then $c\ll (a\cdot b)$,

$(\ll 7)$ If $a\ll b$ and $(b\cdot c)\ll d$ and $c\ll (a+d)$, then $c\ll d$.

\end{quote}

\textbf{Mixed axioms:}

\begin{quote}
$(MC1)$ If $aCb$ and $a\ll c$, then $aC(b\cdot c)$,

$(MC2)$ If $a\overline{C}(b\cdot c)$ and $aCb$ and $(a\cdot d)\overline{C}b$, then $d\widehat{C}c$,

$(M\widehat{C}1)$ If $a\widehat{C}b$ and $c\ll a$, then $a\widehat{C}(b+c)$,

$(M\widehat{C}2)$ If $a\overline{\widehat{C}}(b+c)$ and $a\widehat{C}b$ and $(a+d)\overline{\widehat{C}}b$, then $dCc$,

$(M\ll1)$ If $a\overline{\widehat{C}}b$ and $(a\cdot c)\ll b$, then $c\ll b$,

$(M\ll2)$ If $a\overline{C}b$ and $b\ll (a+c)$, then $b\ll c$.

\end{quote}
\end{definition}

 \begin{observation} {\bf Duality principle.}\label{duality}  {\rm  For the language of EDCL we can introduce the following principle of duality: dual pairs $(0,1), (\cdot, +), (\leq, \geq), (C,\widehat{C}), (\ll, \gg)$. By means of these pairs for each statement (definition) $A$ of the language we can define in an obvious way its  dual $\widehat{A}$. Then by a routine verification one can see that for each axiom $Ax$ from the list of axioms of EDCL its dual $\widehat{Ax}$ is also true.  On the base of this observation the proofs of dual statements will be omitted. Note, for instance, that each axiom from the first group (axioms for C alone) is dually equivalent to the corresponding axiom from the second group (axioms for $\widehat{C}$ alone) and vice versa, the third and fourth groups of axioms  (axioms for $\ll$ alone and mixed axioms) are closed under duality, for instance the axiom $(M \widehat{C}1)$ is dually equivalent to the axiom $(M C1)$, and $(M \ll2)$ is dually equivalent to $(M \ll1)$. $\square$}
 \end{observation}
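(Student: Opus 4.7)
The plan is to make the dualisation map precise and then verify, one axiom at a time, that its image lies in the same list (possibly after renaming bound variables or rewriting $\geq,\gg$ as converses). Concretely, given a first-order formula $A$ in the signature of EDC-lattices, define $\widehat{A}$ by simultaneously replacing every occurrence of $0$ by $1$, of $\cdot$ by $+$, of $\leq$ by $\geq$, of $C$ by $\widehat{C}$, of $\ll$ by $\gg$, and vice versa, while leaving the propositional connectives and quantifiers untouched. Two preparatory remarks are needed. First, the bounded distributive lattice axioms (associativity, commutativity, idempotence, absorption, distributivity, and the two unit laws) are self-dual in the classical sense, so the algebraic part of the structure is preserved by $\widehat{\,\cdot\,}$. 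Second, since $\geq$ and $\gg$ are introduced only as shorthands for the converses of $\leq$ and $\ll$, every occurrence of $a\geq b$ in a dualised axiom can be immediately rewritten as $b\leq a$, and similarly for $\gg$; this allows a direct textual comparison with the original axiom list.

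With this in place, I would pair the axioms into dual blocks and carry out the verifications. The C-block and the $\widehat{C}$-block are paired term by term: the dual of $(Ci)$ is literally $(\widehat{C}i)$ for $i=1,\dots,5$. Inside the $\ll$-block, $(\ll 1)$ and $(\ll 2)$ are each other's duals, $(\ll 5)$ and $(\ll 6)$ likewise, while $(\ll 3)$ and $(\ll 4)$ are self-dual once the converses are resolved. The only axiom where one should write the substitution out explicitly is $(\ll 7)$: its dual reads ``if $b\ll a$ and $d\ll(b+c)$ and $(a\cdot d)\ll c$, then $d\ll c$'', which becomes $(\ll 7)$ itself under the renaming $a\leftrightarrow b$, $c\leftrightarrow d$. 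For the mixed block I would pair $(MC1)\leftrightarrow (M\widehat{C}1)$, $(MC2)\leftrightarrow (M\widehat{C}2)$, and $(M\ll 1)\leftrightarrow (M\ll 2)$; in each case the dual is obtained by a direct substitution, and the result matches the listed axiom verbatim after resolving $\gg$ into $\ll$.

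The verification is thus mechanical; the one mild obstacle is notational bookkeeping, namely keeping track of how $\geq$ and $\gg$ unfold in the dualised clauses and confirming that the resulting inequalities appear in exactly the same syntactic form as in the original axiom. Once this is done for every axiom the observation is proved, and any derivation performed from the axioms can be dualised clause by clause, which is what justifies omitting the proofs of dual statements throughout the rest of the paper.
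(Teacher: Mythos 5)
Your proposal is correct and takes essentially the same route as the paper, which simply asserts that the verification is routine and notes the pairings $(Ci)\leftrightarrow(\widehat{C}i)$, $(M\widehat{C}1)\leftrightarrow(MC1)$, $(M\ll 2)\leftrightarrow(M\ll 1)$; you merely carry out the substitutions explicitly, and your pairings (including the self-duality of $(\ll 3)$, $(\ll 4)$ and the variable renaming needed for $(\ll 7)$) all check out.
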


\subsection{Relational models of EDC-lattices}\label{relmodels}

In order to prove that the axioms of EDC-lattices are true in contact algebras we will introduce a relational models of EDCL which are slight modifications of the relational models of contact algebras introduced in \cite{DuV} and called there \emph{discrette contact algebras}. The model is defined as follows.

Let $(W,R)$ be a relational system where $W$ is a nonempty set and $R$ is a reflexive and symmetric relation in $W$ and  let $a,b$ be arbitrary subsets of $W$. Define a contact relation between $a$ and $b$ as follows

(Def $C_{R}$)  $aC_{R}b$ iff $\exists x\in a$ and $\exists y\in b$ such that $x R y$.

\noindent Then any Boolean  algebra of subsets of $W$ with thus defined contact is a contact algebra, and moreover, every contact algebra is isomorphic to a contact algebra of such a kind \cite{DuV}.

We will modify this model for EDCL as follows: instead of Boolean algebras of sets we consider only families of subsets containing the empty set $\varnothing$ and the set $W$ and closed under the set-union and set-intersection which are bounded distributive lattices of sets. Hence we interpret lattice constants and operations as follows: $0=\varnothing$, $1=W$, $a\cdot b=a\cap b$, $a+b=a\cup b$. For the contact relation we preserve the definition (Def $C_{R}$). This modification is just a model of distributive contact lattice  studied in  \cite{dmvw2}.

Having in mind the definitions   $a\widehat{C}b\leftrightarrow_{def} a^{*}Cb^{*}$ and $a\ll b\leftrightarrow_{def} a\overline{C}b^{*}$) in Boolean algebras, we introduce the following definitions for $\widehat{C}$ and $\ll$ (for some convenience we present the definition of the negation of $\ll$):

\begin{quote}
(Def $\widehat{C}_{R}$ )  $a\widehat{C}_{R}b$ iff $\exists x\not\in a$ and $\exists y\not\in b$ such that $x R y$, and

(Def $\not\ll_{R}$) $a\not\ll_{R} b$ iff $\exists x\in a$ and $\exists y\not\in b$ such that $x R y$.

\end{quote}

\begin{lemma}\label{RelationEDCL} Let $(W,R)$ be a relational system with reflexive and symmetric relation $R$ and let $\underline{D}$ be any collection of subsets of $W$ which is a bounded distributive set-lattice with relations $C, \widehat{C}$ and $\ll$ defined as above. Then $(\underline{D},C_{R},\widehat{C}_{R},\\\ll_{R})$ is an EDC-lattice.
\end{lemma}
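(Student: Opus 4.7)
The plan is to verify each axiom of Definition \ref{EDCL} directly from the three relational definitions, while exploiting the Duality Principle (Observation \ref{duality}) to halve the work. Note that under set complementation in $W$, the three defining clauses transform into one another in exactly the way the syntactic duality predicts: complementing swaps $\in$ with $\notin$, union with intersection, and $\varnothing$ with $W$, so $C_R$ goes to $\widehat{C}_R$ and $\ll_R$ goes to $\gg_R$. Thus once an axiom is verified for the relational model, its syntactic dual follows from the same verification applied to the membership-complemented witnesses. This means I only need to treat axioms (C1)--(C5), $(\ll 1)$--$(\ll 7)$, $(MC1)$, $(MC2)$, $(M\ll 1)$, $(M\ll 2)$; the $\widehat{C}$-axioms and the dual mixed axioms $(M\widehat{C}1)$, $(M\widehat{C}2)$ then drop out by duality.

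The axioms (C1)--(C5) are already known from the theory of distributive contact lattices \cite{dmvw2}: (C1), (C2), (C3) and its dual are immediate from the existential form of $C_R$; (C4) uses symmetry of $R$; (C5) uses reflexivity of $R$ (if $z \in a \cap b$ then $z R z$). For the $\ll$-axioms: $(\ll 1)$ and $(\ll 2)$ hold vacuously because $a \not\ll_R b$ requires witnesses in $a$ and in $W\setminus b$, and either $\varnothing$ or $W$ blocks one side. For $(\ll 3)$, if $a\not\subseteq b$ pick $x\in a\setminus b$; reflexivity gives $xRx$, so $a\not\ll_R b$. Axioms $(\ll 4)$, $(\ll 5)$, $(\ll 6)$ are direct from the shape of the definition. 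For $(\ll 6)$ for instance, a witness to $c\not\ll_R(a\cdot b)$ gives $y\notin a\cap b$, hence $y\notin a$ or $y\notin b$.

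The main obstacle is axiom $(\ll 7)$, which involves four sets and a subtle interplay of three premises. I will argue by contraposition: suppose $c\not\ll_R d$, so there exist $x\in c$, $y\notin d$ with $x R y$. From $c\ll_R (a+d)$ and $y\notin d$ we must have $y\in a$, else $(x,y)$ witnesses $c\not\ll_R(a+d)$. Then symmetry gives $yRx$, and $a\ll_R b$ combined with $y\in a$ forces $x\in b$. Hence $x\in b\cdot c$ and $y\notin d$ with $xRy$, contradicting $(b\cdot c)\ll_R d$. The mixed axioms are comparatively routine witness-tracking. For $(MC1)$, a contact witness $x\in a$, $y\in b$, $xRy$ combined with $a\ll_R c$ forces $y\in c$, so the same $(x,y)$ witnesses $aC_R(b\cdot c)$. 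For $(MC2)$, a $C$-witness $x\in a$, $y\in b$ with $xRy$ must satisfy $y\notin c$ (by $a\overline{C}_R(b\cdot c)$) and $x\notin d$ (by $(a\cdot d)\overline{C}_R b$), giving exactly a $\widehat{C}_R$-witness for $d\widehat{C}_R c$. For $(M\ll 1)$ and $(M\ll 2)$, contraposition together with symmetry of $R$ and the hypothesis $a\overline{\widehat{C}}_R b$ (respectively $a\overline{C}_R b$) forces the $\ll_R$-witness already produced to live, respectively, inside $a$ or outside $a$, which is precisely what is needed to contradict the remaining premise.
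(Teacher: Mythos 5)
Your proposal is correct and matches the paper's intent: the paper disposes of this lemma with the single line ``routine verification that all axioms are true,'' and what you have written is precisely that verification carried out, using the duality principle of Observation \ref{duality} (which the paper itself endorses for omitting dual proofs) to cut the case count in half. All the individual witness-chasing arguments check out, including the only nontrivial cases $(\ll 7)$ and the mixed axioms, so nothing further is needed.
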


\begin{proof} Routine verification that all axioms of EDC-lattice are true. $\square $
\end{proof}

EDC-lattice $\underline{D}=(D,C_{R}, \widehat{C}_{R}, \ll_{R})$ over a relational system $(W,R)$ will be called \emph{discrete EDC-lattice}. If $D$ is a set of all subsets of $W$ then $\underline{D}$ is called a \emph{full discrete EDC-lattice}.

\begin{corollary}The axioms of the relations $C,\widehat{C}$ and $\ll$ are true in contact algebras.
\end{corollary}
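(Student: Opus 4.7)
The plan is to reduce the statement to the relational case already settled by Lemma \ref{RelationEDCL}, using the known representation of contact algebras as subalgebras of powersets equipped with a $C_R$-contact. More precisely, I would invoke the result cited from \cite{DuV} (and recalled just above Lemma \ref{RelationEDCL}) that every contact algebra $(\underline{B},C)$ is isomorphic to some $(\underline{B}',C_R)$, where $\underline{B}'$ is a Boolean subalgebra of $2^W$ for some set $W$ and $R$ is a reflexive symmetric relation on $W$. Since all EDC-lattice axioms are first-order in $C,\widehat{C},\ll$, and $\widehat{C},\ll$ are defined in a contact algebra via Boolean complementation, verifying them in the image of the isomorphism is enough.

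The second step is to check that in such a set-theoretic contact algebra the derived relations coincide with the discrete ones. The Boolean complement of $a\in\underline{B}'$ is $W\setminus a$, so the definition $a\widehat{C}b\leftrightarrow a^{*}Cb^{*}$ unfolds directly into (Def $\widehat{C}_{R}$), and $a\overline{C}b^{*}$ unfolds into the negation of (Def $\not\ll_{R}$), so that $a\ll b$ matches $\ll_{R}$. Because $\underline{B}'$ is closed under $\cap,\cup$ and contains $\varnothing$ and $W$, it is in particular a bounded distributive sublattice of $2^W$; hence Lemma \ref{RelationEDCL} applies to $(\underline{B}',C_{R},\widehat{C}_{R},\ll_{R})$, yielding all the axioms of Definition \ref{EDCL}. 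Pulling them back through the isomorphism gives the corollary.

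The main (modest) obstacle is really only bookkeeping: one must carefully match the Boolean definitions of $\widehat{C}$ and $\ll$ against (Def $\widehat{C}_{R}$) and (Def $\not\ll_{R}$), and note that the representation from \cite{DuV} does produce a subalgebra containing all the joins and meets needed to evaluate each axiom in the representing structure. An alternative I would keep in reserve as a sanity check, but not use as the primary argument, is to verify each axiom directly inside an arbitrary contact algebra from (C1)--(C5) together with the Boolean definitions of $\widehat{C}$ and $\ll$; the less symmetric mixed axioms such as $(MC2)$, $(M\ll 1)$ and $(M\ll 2)$ become elementary Boolean manipulations in that route, but this is tedious and yields no additional insight over the relational reduction.
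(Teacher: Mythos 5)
Your proposal is correct and follows essentially the same route as the paper: reduce to the discrete relational model via the representation of contact algebras from \cite{DuV} and then invoke Lemma \ref{RelationEDCL}. The only difference is that you make explicit the bookkeeping step of unfolding the Boolean definitions of $\widehat{C}$ and $\ll$ into (Def $\widehat{C}_{R}$) and (Def $\not\ll_{R}$), which the paper leaves implicit; this is a welcome clarification but not a different argument.
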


\begin{proof} The proof follows by Lemma ref{RelationEDCL} and the fact that every contact algebra can be  isomorphically embedded into a discrete contact algebra over some relational system $(W,R)$ wit reflexive and symmetric relation $R$ \cite{DuV}. $\square$
\end{proof}
\section{Embedding  EDC-lattices into contact algebras}

The main aim of this  section is the proof a theorem stating that every EDC-lattice can be embedded into a full discrete EDC-lattice, which, of course is a Boolean contact algebra. As a consequence this will show that the axiomatization program for EDCL is fulfilled successfully. Since all axioms of EDC-lattice are universal first-order conditions, the axiomatization can be considered also as a characterization of the universal fragment of complement-free contact algebras based on the three relations.  We will use in the representation theory a Stone like technique developed in \cite{Stone} for the representation theory of distributive lattices.

\subsection{Preliminary facts about filters and ideals in \\distributive lattices}

We remaind some basic facts about filters and ideals in distributive lattices, for details see \cite{bd,Stone}.

Let $\underline{D}$ be a distributive lattice. A subset $F$ of $D$ is called a filter in $D$ if it satisfies the following conditions:
(f1) $1\in F$, (f2) if $a\in F$ and $a\leq b$ then $b\in F$, (f3) if $a,b\in F$ then $a.b\in F$. $F$ is a proper filter if $0\not\in F$, $F$ is  a prime filter if it is a proper filter  and $a+b\in F$ implies $a\in F$ or $b\in F$.

Dually, a subset $I$ of $D$ is an ideal if (i1) $0\in I$, (i2) if $a\in I$ and  $b\leq a$ then $b\in I$, (i3) if $a,b\in I$ then $a+b\in I$. I is a proper ideal if $1\not\in I$, $I$ is a prime ideal if it is a proper ideal and  $a.b\in I$ implies $a\in I$ or $b\in I$.

We will use later on some of the following facts without explicit mentioning.

\begin{facts}\label{facts} Let $\underline{D}$ be a bounded distributive lattice and Let $F, F_{1},F_{2}$ be filters and $I,I_{1},I_{2}$ be ideals.
\begin{enumerate}
\item  The complement of a prime filter is a prime ideal and  vice-versa.

\item $[a)=\{x\in D:a\leq x\}$ is the smallest filter containing $a$;

$(a]=\{x\in D: x\leq a \}$ is the smallest ideal containing $a$.

\item  $F_{1}\oplus F_{2}=\{c\in D: (\exists a\in F_{1}, b\in F_{2})(a\cdot b\leq c)\}=\{a\cdot b:a\in F_{1}, b\in F_{2}\}$ is the smallest filter containing $F_{1}$  and $F_{2}$.

 $[a)\oplus F=\{x\cdot y: a\leq x,\ y\in F\}$

 $I_{1}\oplus I_{2}=\{c\in D: (\exists a\in I_{1}, b\in I_{2})( c\leq a+b)\}=\{a + b:a\in I_{1}, b\in I_{2}\}$ is the smallest ideal containing $I_{1}$  and $I_{2}$.

 $(a]\oplus I=\{x+y: x\leq a,\ y\in I\}$.

 In both cases the operation $\oplus$ is associative and commutative.

 \item   $[a)\cap I=\varnothing$ iff $a\not\in I$

 If $(F\oplus [a))\cap I\not=\varnothing$ then $(\exists x\in F)(a\cdot x\in I)$,

 $(a]\cap F=\varnothing$ iff $a\not\in F$

 If $F\cap (I\oplus (a])\not=\varnothing$ then $(\exists x\in I)(a+x\in F)$.

\end{enumerate}

\end{facts}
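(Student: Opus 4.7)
The statement is a bundle of well-known facts about filters and ideals in bounded distributive lattices, so the plan is simply to verify each item directly from the definitions, leaning on the distributive law only where strictly necessary. I would treat the four items in the order listed and use the symmetry between filters and ideals (order duality) to halve the work.

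For item (1), set $I = D \setminus F$ where $F$ is a prime filter. I would check the three ideal axioms: $0 \in I$ because $F$ is proper; downward closure of $I$ follows from upward closure of $F$ by contraposition; closure of $I$ under $+$ follows from primality of $F$ by contraposition (if $a+b \in F$ then $a\in F$ or $b\in F$). Properness of $I$ is just $1\in F$, and primality of $I$ follows from the filter axiom $a,b\in F\Rightarrow a\cdot b\in F$ contrapositively. The converse direction is dual. Item (2) is essentially immediate: $[a)$ contains $a$, is upward closed by transitivity of $\leq$, closed under $\cdot$ because if $a\leq x,y$ then $a\leq x\cdot y$, and contains $1$; minimality is because any filter containing $a$ must, by upward closure, contain everything above $a$. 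The ideal statement is dual.

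For item (3) I would first show the two presentations of $F_1\oplus F_2$ coincide: the inclusion $\{a\cdot b:a\in F_1,b\in F_2\}\subseteq\{c:\exists a\in F_1,b\in F_2,\ a\cdot b\leq c\}$ is trivial; for the reverse, if $a\cdot b\leq c$ with $a\in F_1,b\in F_2$, then $c = c\cdot 1 = c$ and one can rewrite $c = (a\vee c)\cdot(b\vee c)$ using distributivity with $a\vee c\in F_1$ and $b\vee c\in F_2$ (both by upward closure), giving the meet presentation. Then I would check $F_1\oplus F_2$ is a filter (containing $1=1\cdot 1$, closed under meets because $(a_1\cdot b_1)\cdot(a_2\cdot b_2)=(a_1\cdot a_2)\cdot(b_1\cdot b_2)$, upward closed by the $\leq$ presentation), contains $F_i$ (take the other factor $=1$), and is contained in any filter containing both $F_1,F_2$ (which must contain every meet $a\cdot b$). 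The special case $[a)\oplus F$ is then immediate, and associativity/commutativity of $\oplus$ follow from the associativity/commutativity of $\cdot$ at the level of generators. The ideal clauses are dual.

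For item (4), $[a)\cap I=\varnothing\iff a\notin I$ is immediate: $a\in[a)$ always, so $a\in I$ forces nonempty intersection; conversely if $x\in[a)\cap I$ then $a\leq x\in I$ gives $a\in I$. For the nontrivial half, suppose $c\in (F\oplus[a))\cap I$. By the generator form of item (3), $c = x\cdot y$ with $x\in F$ and $a\leq y$; then $a\cdot x\leq y\cdot x = c\in I$, so $a\cdot x\in I$ by downward closure of $I$, which is exactly the desired witness $x\in F$. The two bottom-row clauses are order-dual to the top two and require no new ideas. The only step that is not completely mechanical is the equality of the two descriptions of $F_1\oplus F_2$, which is the single point at which distributivity is invoked; everything else is bookkeeping.
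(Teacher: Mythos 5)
Your verification is correct throughout; the one genuinely non-mechanical step, the identity $c=(a+c)\cdot(b+c)$ via distributivity to show the two presentations of $F_{1}\oplus F_{2}$ coincide, is exactly the standard argument. The paper itself gives no proof of these facts (it defers to the cited references on distributive lattices), so your direct verification simply supplies the routine details the paper omits, and there is nothing to compare beyond that.
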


The following three statements are well known in the representation theory of distributive lattices.

\begin{lemma}\label{feL}  Let $F_{0}$ be a filter, $I_{0}$ be an Ideal and $F_{0}\cap I_{0}=\varnothing$. Then:
\begin{enumerate}
\item \label{filter-extension Lemma} {\bf Filter-extension Lemma.}
There exists a prime filter $F$ such that $F_{0}\subseteq F$ and $F\cap I_{0}=\varnothing$.

\item \label{Ideal-extyension Lemma} {\bf Ideal-extension Lemma.} There exists a prime ideal $I$ such that $I_{0}\subseteq I$ and $F_{0}\cap I=\varnothing$.

\item \label{separation Lemma} {\bf Separation Lemma for filters and ideals.} There exist a (prime) filter $F$ and an (prime) ideal $I$ such that $F_{0}\subseteq F$, $I_{0}\subseteq I$, $F\cap I=\varnothing$, and $F\cup I=D$.

\end{enumerate}
\end{lemma}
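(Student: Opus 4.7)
The plan is to prove all three statements by a standard Zorn's-lemma extension argument, exploiting the duality between filters and ideals so that only one of the first two items needs serious work; the Separation Lemma will then fall out essentially for free.

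For the Filter-extension Lemma, I would consider the family $\mathcal{F}=\{F'\subseteq D: F' \text{ is a filter},\ F_{0}\subseteq F',\ F'\cap I_{0}=\varnothing\}$ partially ordered by inclusion. It is nonempty (it contains $F_{0}$) and the union of any chain in $\mathcal{F}$ is again a filter in $\mathcal{F}$, so Zorn's Lemma yields a maximal element $F$. The task is to verify that $F$ is prime. Suppose for contradiction that $a+b\in F$ while $a\notin F$ and $b\notin F$. By maximality, both $F\oplus [a)$ and $F\oplus [b)$ fail to belong to $\mathcal{F}$, so each meets $I_{0}$. Using item 4 of Facts \ref{facts}, this gives $x,y\in F$ with $a\cdot x\in I_{0}$ and $b\cdot y\in I_{0}$. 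Set $z=x\cdot y\in F$; then $(a+b)\cdot z\in F$, while by distributivity $(a+b)\cdot z = a\cdot x\cdot y + b\cdot y\cdot x \le a\cdot x + b\cdot y\in I_{0}$, so $(a+b)\cdot z\in I_{0}$ by downward closure of $I_{0}$, contradicting $F\cap I_{0}=\varnothing$. The main care here is in picking the right auxiliary element $z$ and applying distributivity cleanly; this is the one calculation I would not want to skip.

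The Ideal-extension Lemma is the exact order-dual statement and follows by running the same Zorn argument on the family of ideals containing $I_{0}$ and disjoint from $F_{0}$, then applying the dual version of item 4 of Facts \ref{facts} (together with the dual distributive inequality). Alternatively, it can be obtained at once from the Filter-extension Lemma by noting that, by item 1 of Facts \ref{facts}, the complement of a prime filter is a prime ideal.

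For the Separation Lemma I would simply invoke the Filter-extension Lemma to obtain a prime filter $F$ with $F_{0}\subseteq F$ and $F\cap I_{0}=\varnothing$, and then set $I=D\setminus F$. By item 1 of Facts \ref{facts} the set $I$ is a prime ideal; it contains $I_{0}$ (because $F\cap I_{0}=\varnothing$) and is disjoint from $F_{0}$ (because $F_{0}\subseteq F$); finally $F\cap I=\varnothing$ and $F\cup I=D$ hold by construction. This gives both the prime and the non-prime versions of the separation statement in one stroke, so no additional work is required for the parenthetical weakening.

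The only genuine obstacle is the primeness argument in the first lemma — everything else is either dualization or bookkeeping. I would therefore organize the write-up by proving (1) carefully, remarking that (2) follows by duality or by complementation, and then deriving (3) in two lines from (1) and the prime-filter/prime-ideal complementation fact.
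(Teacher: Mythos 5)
Your proposal is correct and follows essentially the same route the paper takes: the paper cites these statements as well known and, in Remark~\ref{Strong extension}, sketches exactly your Zorn's-lemma argument on the family of filters containing $F_{0}$ and disjoint from $I_{0}$, noting that each of the three statements easily implies the other two. Your write-up actually supplies the primeness verification (via $z=x\cdot y$ and distributivity) that the paper leaves as "it can be proved", and that calculation is correct.
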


\begin{remark}\label{Strong extension}{\rm Note that \emph{Filter-extension Lemma} is dual to the \emph{Ideal-extension Lemma} and that each of the three statement easily implies the other two. Normally they can be proved by application of the Zorn Lemma. The proof, for instance, of Filter-extension Lemma goes as follows. Apply the Zorn Lemma to the set $M=\{G: G $ is a filter, $F_{0}\subseteq G$ and $G\cap I_{0}=\varnothing\}$  and denote by $F$ one of its maximal elements. Then it can be  proved that $F$ is a prime filter, and this finishes the proof. The sketched proof gives, however, an additional property of the filter $F$, namely

 $(\forall x\not\in F)(\exists y\in F)(x\cdot y\in I_{0})$,

 \noindent which added to the formulation of the lemma makes it stronger. Since we will need later on this stronger version let us prove this property.

Suppose that $x\not\in F$ and consider the filter $F\oplus[x)$. Since $F$ is a maximal element of $M$, then $F\oplus[x)$ does not belong to $M$ and consequently $F\oplus[x)\cap I_{0}\not=\varnothing$. By the Fact \ref{facts}, 4, there exists $y\in F$ such that $x\cdot y\in I_{0}$. We formulate this new statement below as \emph{Strong filter-extension Lemma} and its dual as \emph{Strong ideal-extension Lemma}. We do not know if these two statements for distributive lattices are new, but we will use them in the representation theorem in the next section. $\square$ }

\end{remark}

\begin{lemma} \label{strong extension lemma} Let $F_{0}$ be a filter, $I_{0}$ be an Ideal and $F_{0}\cap I_{0}=\varnothing$. Then:
\begin{enumerate}
\item \label{strong filter-extension Lemma} {\bf Strong filter-extension Lemma.}
There exists a prime filter $F$ such that $F_{0}\subseteq F$ , $(\forall x\in F)(x\not\in I_{0})$ and $(\forall x\not\in F)(\exists y\in F)(x\cdot y\in I_{0})$.

\item \label{strong ideal-extyension Lemma} {\bf Strong ideal-extension Lemma.} There exists a prime ideal $I$ such that $I_{0}\subseteq I$, $(\forall x\in I)(x\not\in F_{0})$  and $(\forall x\not\in I)(\exists y\in I)(x + y\in F_{0})$.

\end{enumerate}
\end{lemma}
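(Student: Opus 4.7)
The plan is to formalize the Zorn argument already sketched in Remark 2.6 and supply the missing verification that the maximal filter is prime, then invoke duality for part 2.

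For part 1, I would first set $M = \{G : G \text{ is a filter},\ F_0 \subseteq G,\ G \cap I_0 = \varnothing\}$, ordered by inclusion. The set $M$ is nonempty since $F_0 \in M$, and the union of any chain in $M$ is again a filter containing $F_0$ and disjoint from $I_0$ (the filter axioms being preserved by directed unions, and disjointness being a "no element" condition that survives union). So Zorn's Lemma supplies a maximal element $F \in M$. This already yields $F_0 \subseteq F$ and the second clause $(\forall x \in F)(x \notin I_0)$.

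Next I would verify the strong condition $(\forall x \notin F)(\exists y \in F)(x \cdot y \in I_0)$. Given $x \notin F$, the filter $F \oplus [x)$ strictly contains $F$, so by maximality of $F$ in $M$ we must have $(F \oplus [x)) \cap I_0 \neq \varnothing$. By Facts 2.3(4), this gives some $y \in F$ with $x \cdot y \in I_0$, as required. (This is exactly the calculation from Remark 2.6.)

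The remaining and slightly more delicate step is to verify that $F$ is actually \emph{prime}. Suppose for contradiction $a + b \in F$ with $a, b \notin F$. Applying the strong condition just established to each of $a$ and $b$, pick $y_1, y_2 \in F$ with $a \cdot y_1 \in I_0$ and $b \cdot y_2 \in I_0$. Setting $y = y_1 \cdot y_2 \in F$ and using distributivity and downward closure of $I_0$, we get $a \cdot y \in I_0$ and $b \cdot y \in I_0$, hence $(a+b) \cdot y = a \cdot y + b \cdot y \in I_0$. But $(a+b) \cdot y \in F$ because both factors lie in $F$, contradicting $F \cap I_0 = \varnothing$. Hence $F$ is prime. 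This closes part 1.

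Part 2 is the exact dual: run the same Zorn argument on $N = \{J : J \text{ is an ideal},\ I_0 \subseteq J,\ F_0 \cap J = \varnothing\}$, replacing $\oplus$ on filters with $\oplus$ on ideals and $[x)$ with $(x]$. The only step requiring any attention, as above, is the primality of the maximal ideal, which follows by the symmetric computation using $I \oplus (x]$ and the dual clause in Facts 2.3(4). I expect no genuine obstacle: the whole argument is a careful packaging of the standard maximal-filter trick together with the sharper "witness" version of disjointness that the proof of maximality automatically provides; the only thing to be careful about is keeping the roles of filters and ideals straight and using distributivity at the right moment in the primality step.
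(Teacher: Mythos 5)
Your proof is correct and follows essentially the same route as the paper, which proves this lemma in Remark \ref{Strong extension} by applying Zorn's Lemma to the set $M$ and extracting the witness $y$ from the maximality of $F$ via $F\oplus[x)$ and Facts \ref{facts}(4). The only addition is that you spell out the primality verification (which the paper leaves as ``it can be proved that $F$ is a prime filter''), and your argument for it --- deriving the strong witness condition first and then using distributivity on $(a+b)\cdot y$ --- is sound.
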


\subsection{Filters and Ideals in EDC-lattices}

In the next two lemmas we list some constructions of filters and ideals in EDCL which will be used in the representation theory of EDC-lattices.

\begin{lemma}
\label{FiltersIdeals1} Let
$\underline{D}=(D,C, \widehat{C}, \ll)$ be an EDC-lattice. Then:

\begin{enumerate}
\item  The set $I(x\overline{C}b)=\{x\in D: x\overline{C}b\}$ is an ideal,

\item the set $F(x\overline{\widehat{C}}b)=\{x\in D:x\overline{\widehat{C}}b\}$ is a filter,

\item  the set $I(x\ll b)=\{x\in D: x\ll b\}$ is an ideal,

\item the set $F(x\gg b)=\{x\in D:x\gg b\}$ is a filter.

\end{enumerate}
\end{lemma}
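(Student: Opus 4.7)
The plan is to verify directly the three defining properties of an ideal (containing $0$, downward closure, closure under finite joins) for parts 1 and 3, and then obtain parts 2 and 4 by invoking the Duality Principle from Observation \ref{duality}. There is no real obstacle here: each item reduces to a one-line application of the axioms for $C$ or $\ll$ already listed in Definition \ref{EDCL}.

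For part 1, the set $I(x\overline{C}b)$ contains $0$ because (C1) rules out $0Cb$. Downward closure is immediate from the monotonicity axiom (C2): if $y\le x$ and $yCb$, then (C2) with $b\le b$ forces $xCb$, contradicting $x\in I(x\overline{C}b)$. Closure under $+$ uses (C3) together with the symmetry (C4) — that is, the derived (C3') noted right after the axioms: from $(x+y)Cb$ one gets $bC(x+y)$, then $bCx$ or $bCy$, then $xCb$ or $yCb$, contradicting $x,y\in I(x\overline{C}b)$.

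For part 3, the set $I(x\ll b)$ contains $0$ by combining ($\ll 1$) with ($\ll 4$): the chain $0\le 0\ll 0\le b$ gives $0\ll b$. Downward closure is a direct application of ($\ll 4$) in the form $y\le x\ll b\le b\Rightarrow y\ll b$. Closure under $+$ is exactly axiom ($\ll 5$).

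Finally, parts 2 and 4 follow by the Duality Principle. Part 2 is the dual of part 1: the dual of the notion ``ideal'' (defined via $0$, $\le$-downward closure, and closure under $+$) is ``filter'' (defined via $1$, $\ge$-downward closure, and closure under $\cdot$), and the dual of $\overline{C}$ is $\overline{\widehat{C}}$, so $F(x\overline{\widehat{C}}b)$ is the dual construction of $I(x\overline{C}b)$. Similarly, part 4 is the dual of part 3, since $\gg$ is dual to $\ll$. By Observation \ref{duality} the proofs transfer verbatim under replacement of each axiom by its dual (e.g.\ ($\widehat{C}1$)--($\widehat{C}4$) play the role of (C1)--(C4), and the converses of ($\ll 1$), ($\ll 2$), ($\ll 4$), ($\ll 6$) replace the ones used in part~3). $\square$
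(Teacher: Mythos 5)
Your proof is correct and follows essentially the same route as the paper: verify the ideal conditions for items 1 and 3 directly from the axioms (C1)--(C4) and ($\ll 1$), ($\ll 4$), ($\ll 5$), and obtain items 2 and 4 by the duality principle. The only cosmetic difference is that you spell out item 3 (which the paper leaves as ``similar'') and phrase the closure arguments contrapositively.
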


\begin{proof} \emph{1}. By axiom (C1) $0\overline{C}b$, so $0\in I(x\overline{C}b)$. Suppose $x\in I(x\overline{C}b)$ (hence $x\overline{C}b$) and $y\leq x $. Then by axiom (C2) $y\overline{C}b$). Let $x,y\in I(x\overline{C}b)$, hence $x\overline{C}b$ and $y\overline{C}b$. Then by axiom (C3) and (C4) we get $(x+y)\overline{C}b$ which shows that $x+y\in I(x\overline{C}b)$, which ends the proof of this case.

In a similar way one can proof \emph{3}. The cases \emph{2.} and \emph{4.} follow from \emph{1.}  and \emph{3.} respectively by duality. $\square$

\end{proof}

\begin{lemma}\label{FiltersIdeals2}
Let  $\underline{D}=(D,C, \widehat{C}, \ll)$ be an EDC-lattice and  Let $\Gamma$ be a prime filter in $\underline{D}$. Then:

\begin{enumerate}
\item  The set $I(x\overline{C}\Gamma)=\{x\in D: (\exists y\in \Gamma)(x\overline{C}y)\}$ is an ideal,

\item the set $F(x\overline{\widehat{C}}\overline{\Gamma})=\{x\in D:(\exists y\in \overline{\Gamma})(x\overline{\widehat{C}}y)\}$ is a filter,

\item  the set $I(x\ll \overline{\Gamma})=\{x\in D:(\exists y\in \overline{\Gamma})( x\ll y)\}$ is an ideal,

\item the set $F(x\gg\Gamma)=\{x\in D:(\exists y \in \Gamma)(x\gg y)\}$ is a filter.

\end{enumerate}
\end{lemma}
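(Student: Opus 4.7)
The plan is to verify the three defining properties of an ideal (or filter) for each of the four cases, using only the axioms listed for $C, \widehat{C}, \ll$ in the first and third groups of Definition \ref{EDCL}, together with the standard fact that the complement of a prime filter in a distributive lattice is a prime ideal (Facts \ref{facts}, 1). By the duality principle (Observation \ref{duality}), cases 2 and 4 will follow from cases 1 and 3 respectively, since the pair $(C,\widehat{C})$ and the pair $(\ll,\gg)$ are dual, and the pair ``prime filter $\Gamma$ vs.\ prime ideal $\overline{\Gamma}$'' is also interchanged under duality. So the actual work is in cases 1 and 3.

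For case 1, to see $I(x\overline{C}\Gamma)$ is an ideal, I would first note $0\in I(x\overline{C}\Gamma)$ by picking the witness $1\in\Gamma$, since $0\overline{C}1$ by (C1). Downward closure is immediate from (C2): if $x\overline{C}y$ with $y\in\Gamma$ and $z\leq x$, then $z\overline{C}y$. For closure under join, suppose $x_1,x_2\in I(x\overline{C}\Gamma)$ witnessed by $y_1,y_2\in\Gamma$; since $\Gamma$ is a filter, $y:=y_1\cdot y_2\in\Gamma$, and $y\leq y_i$ gives $x_i\overline{C}y$ by (C2). Then the contrapositive of (C3') (which follows from (C3) and (C4), as noted after the axioms) yields $(x_1+x_2)\overline{C}y$, so $x_1+x_2\in I(x\overline{C}\Gamma)$.

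For case 3, the key observation is that $\overline{\Gamma}$ is a prime ideal, in particular closed under $+$ and downward closed. Then $0\in I(x\ll\overline{\Gamma})$ using the witness $0\in\overline{\Gamma}$ (since $\Gamma$ is proper) together with $(\ll 1)$. Downward closure of $I(x\ll\overline{\Gamma})$ follows from $(\ll 4)$: if $x\ll y$ with $y\in\overline{\Gamma}$ and $z\leq x$, then $z\ll y$. For closure under join, given $x_1\ll y_1$ and $x_2\ll y_2$ with $y_1,y_2\in\overline{\Gamma}$, set $y:=y_1+y_2\in\overline{\Gamma}$ (closure of the prime ideal $\overline{\Gamma}$ under $+$); from $x_i\leq y_i\leq y$ and $(\ll 4)$ we get $x_i\ll y$, and then $(\ll 5)$ gives $(x_1+x_2)\ll y$.

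Cases 2 and 4 follow by a word-for-word dualization of cases 1 and 3, after replacing the prime filter $\Gamma$ by its prime-ideal complement $\overline{\Gamma}$ (and vice versa), the axioms of $C$ by the dual axioms of $\widehat{C}$, and $\ll$ by $\gg$; no new ideas are needed. There is no real obstacle here: the proof is essentially bookkeeping, and the only mildly substantive point is the use of (C3') (rather than (C3) itself) in case 1, and the fact that $\overline{\Gamma}$ is a prime ideal in case 3, both of which are standard.
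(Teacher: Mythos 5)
Your proof is correct and follows essentially the same route as the paper: verify (i1)--(i3) directly for case 1 using (C1), (C2) and (C3$'$) with the witness $y=y_1\cdot y_2$, handle case 3 analogously via $(\ll 1)$, $(\ll 4)$, $(\ll 5)$ and the fact that $\overline{\Gamma}$ is an ideal, and obtain cases 2 and 4 by the duality principle. The only difference is that you spell out case 3 and the easy clauses in more detail than the paper does.
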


\begin{proof} Note that the Lemma remains true if we replace $\Gamma$ by a filter and $\overline{\Gamma}$ by an ideal.

\emph{1}. The proof that $I(x\overline{C}\Gamma)$ satisfies the conditions (i1) and (i2) from the definition of ideal is easy. For the condition (i3) suppose $x_{1},x_{2}\in I(x\overline{C}\Gamma)$. Then $\exists y_{1},y_{2}\in \Gamma$ such that $x_{1}\overline{C}y_{1}$ and $x_{2}\overline{C}y_{2}$, Since $\Gamma$ is a filter then $y=y_{1}\cdot y_{2}\in \Gamma$. Since $y\leq y_{1}$ and $y\leq y_{2}$, then by axiom (C2) we get $x_1\overline{C}y$ and $x_2\overline{C}y$. Then applying (C3') we obtain $(x_{1}+x_{2})\overline{C}y$, which shows that $x_{1}+x_{2}\in I(x\overline{C}\Gamma)$.

In a similar way one can prove \emph{3}. The proofs of \emph{2} and \emph{4} follow by duality from \emph{1} and \emph{3}, taking into account that $\overline{\Gamma}$ is an ideal. $\square$

\end{proof}

\subsection{Relational representation theorem for EDC-lattices}\label{relrepresentation}

Throughout this section we assume that $\underline{D}=(D,C,\widehat{C},\ll)$ is an EDC-lattice and let $PF(D)$ and $PI(D)$ denote the set of prime filters of $\underline{D}$ and the set of  prime ideals of $D$.  Let    $h(a)=\{\Gamma\in PF(D): a\in \Gamma\}$ be the well known Stone embedding mapping.  We shall construct a canonical relational structure $(W^{c},R^{c})$ related to $\underline{D}$ putting $W^{c}=PF(D)$ and defining $R^{c}$ for $\Gamma,\Delta\in PF(D)$ as follows:

$\Gamma R^{c}\Delta\leftrightarrow_{def} (\forall a,b\in D)(a\in \Gamma, b\in \Delta \rightarrow aCb)\&(a\not\in \Gamma, b\not \in \Delta\rightarrow a\widehat{C}b)\&(a\in \Gamma, b\not\in \Delta\rightarrow a\not\ll b)\&(a\not\in\Gamma, b\in \Delta \rightarrow b\not\ll a)$

For some technical reasons  and in order to use duality we introduce also the dual canonical structure $(\widehat{W}^{c}, \widehat{R}^{c})$ putting $\widehat{W}^{c}=PI(D)$ and for $\Gamma,\Delta\in PI(D)$, $\Gamma \widehat{R}^{c}\Delta \leftrightarrow_{def} \overline{\Gamma}R^{c}\overline{\Delta}$.

Our aim is to show that the Stone mapping $h$ is an embedding from $\underline{D}$ into the EDC-lattice over $(W^{c},R^{c})$ (see Section \ref{RelationEDCL}). First we need several technical lemmas.

\begin{lemma}
The canonical relations $R^{c}$ and $\widehat{R}^{c}$ are reflexive and symmetric.
\end{lemma}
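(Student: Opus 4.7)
The plan is to verify the four defining clauses of $R^c$ separately for reflexivity and symmetry, then observe that the corresponding statements for $\widehat{R}^c$ follow by duality.

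For reflexivity of $R^c$, I want to show $\Gamma R^c \Gamma$ for every prime filter $\Gamma$. I would take each of the four clauses in turn. If $a,b \in \Gamma$, then since $\Gamma$ is a proper filter, $a \cdot b \in \Gamma$ and hence $a \cdot b \neq 0$; axiom (C5) then yields $aCb$. If $a,b \notin \Gamma$, then primeness gives $a + b \notin \Gamma$, so $a+b \neq 1$ (because $1 \in \Gamma$), and axiom $(\widehat{C}5)$ delivers $a\widehat{C}b$. Finally, if $a \in \Gamma$ and $b \notin \Gamma$, assume for contradiction that $a \ll b$; by $(\ll 3)$ we get $a \leq b$, and the filter property forces $b \in \Gamma$, a contradiction — so $a \not\ll b$. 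The symmetric clause $a \notin \Gamma, b \in \Gamma \Rightarrow b \not\ll a$ is settled by the same argument with the roles of $a,b$ exchanged.

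For symmetry of $R^c$, I assume $\Gamma R^c \Delta$ and verify the four clauses for $\Delta R^c \Gamma$. The first two are immediate from the symmetry of $C$ and $\widehat{C}$ (axioms (C4) and $(\widehat{C}4)$): if $a \in \Delta$ and $b \in \Gamma$, the hypothesis applied to the pair $(b,a)$ gives $bCa$, and (C4) turns this into $aCb$; the $\widehat{C}$-clause is analogous. The two $\ll$-clauses are already stated in mutually symmetric form in the definition of $R^c$: the third clause of $\Delta R^c \Gamma$ (with $a \in \Delta$, $b \notin \Gamma$) is precisely the fourth clause of $\Gamma R^c \Delta$ read with $(b,a)$ in place of $(a,b)$, and vice versa. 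No further axioms are needed here.

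Finally, $\widehat{R}^c$ is defined by $\Gamma \widehat{R}^c \Delta \Leftrightarrow \overline{\Gamma} R^c \overline{\Delta}$ on prime ideals. By Facts \ref{facts}(1), the complement of a prime ideal is a prime filter, so the reflexivity and symmetry of $R^c$ on $PF(D)$ transfer immediately: $\Gamma \widehat{R}^c \Gamma$ because $\overline{\Gamma} R^c \overline{\Gamma}$, and $\Gamma \widehat{R}^c \Delta$ implies $\overline{\Delta} R^c \overline{\Gamma}$, i.e.\ $\Delta \widehat{R}^c \Gamma$. I do not expect any serious obstacle here; the only point that requires care is not conflating the mixed axioms with these basic properties, and making sure the third and fourth clauses of $R^c$ really are swaps of one another rather than independent conditions — so the symmetry argument does not need any axiom about $\ll$ beyond its appearance in the definition itself.
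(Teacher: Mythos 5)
Your proof is correct and follows essentially the same route as the paper's: reflexivity clause by clause via (C5), $(\widehat{C}5)$ and $(\ll 3)$, symmetry from (C4), $(\widehat{C}4)$ and the built-in symmetry of the two $\ll$-clauses, and $\widehat{R}^{c}$ by duality through complements of prime ideals. Nothing to add.
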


\begin{proof} \textbf{( For $R^{c}$)}  Symmetry is obvious by the definition of $R^{c}$ and axioms (C4) and $(\widehat{C}4)$. In order to prove that $\Gamma R^{c}\Gamma$ suppose $a\in \Gamma$ and $b\in \Gamma$. Then $a\cdot b\in \Gamma$ and since $\Gamma$ is a prime filter, then $a.b\not=0$. Then by axiom (C5) we obtain $aCb$, which proves the first conjunct of the definition of $R^{c}$. For the second conjunct suppose that $a\not\in \Gamma$ and $b\not\in \Gamma$, then, since $\Gamma$ is a prime filter, $a+b\not\in \Gamma$ and hence $a+b\not=1$. Then by axiom $(\widehat{C}5)$ we get $a\widehat{C}b$. For the third conjunct suppose $a\in \Gamma$ and $b\not \in \Gamma$, which implies that $a\not\leq b$. Then by axiom $(\ll 3)$ we obtain $a\not \ll b$. The proof of the last conjunct is similar.

\textbf{(For $\widehat{R}^{c}$)} - by duality.
$\square$

\end{proof}

\begin{lemma}\label{C} (i) $aCb$ iff $(\exists\Gamma, \Delta\in PF(D))( a\in \Gamma$ and $b\in \Delta$ and $\Gamma R^{c}\Delta)$.

(ii)  $a\not\ll b$ iff $(\exists \Gamma, \Delta\in PF(D))(a\in \Gamma$ and $b\not\in \Delta$ and $\Gamma R^{c}\Delta)$.

\end{lemma}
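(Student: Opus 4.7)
The $(\Leftarrow)$ direction of both (i) and (ii) is immediate from the definition of $R^{c}$: the first (respectively third) conjunct of the defining conjunction yields $aCb$ (respectively $a\not\ll b$) directly. I focus on the $(\Rightarrow)$ direction and handle (i) and (ii) by parallel two-stage constructions.

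\emph{Construction of $\Gamma$ in case (i).} Given $aCb$, Lemma \ref{FiltersIdeals1}(1) makes $I_{0}:=\{x\in D: x\overline{C}b\}$ an ideal, and $aCb$ forces $a\notin I_{0}$, so $[a)\cap I_{0}=\varnothing$. The \emph{Strong Filter-extension Lemma} (Lemma \ref{strong extension lemma}(1)) applied to $[a)$ and $I_{0}$ produces a prime filter $\Gamma\ni a$ with $\Gamma\cap I_{0}=\varnothing$ and the auxiliary property that every $x\notin\Gamma$ admits a witness $y\in\Gamma$ with $x\cdot y\,\overline{C}\,b$. In particular $zCb$ for all $z\in\Gamma$.

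\emph{Construction of $\Delta$.} Reading off the four conjuncts of $\Gamma R^{c}\Delta$ and demanding $b\in\Delta$, a routine unwinding (using the symmetry of $C$ and $\widehat{C}$) shows this is equivalent to asking that $\Delta$ contain the filter
\[
F^{*}\;:=\;[b)\oplus F(x\gg\Gamma)\oplus F(x\overline{\widehat{C}}\,\overline{\Gamma})
\]
and be disjoint from the ideal
\[
I^{*}\;:=\;I(x\overline{C}\Gamma)\oplus I(x\ll\overline{\Gamma}),
\]
both being well-defined by Lemma \ref{FiltersIdeals2}. Once $F^{*}\cap I^{*}=\varnothing$ is proved, the Filter-extension Lemma (Lemma \ref{feL}(1)) supplies the prime filter $\Delta\supseteq F^{*}$ with $\Delta\cap I^{*}=\varnothing$, as required. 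Case (ii) runs on the same template: replace $I_{0}$ by $I(x\ll b)$ (Lemma \ref{FiltersIdeals1}(3)) for the construction of $\Gamma$, and then require $\overline{\Delta}$ to contain $(b]\oplus I(x\overline{C}\Gamma)\oplus I(x\ll\overline{\Gamma})$ and be disjoint from $F(x\gg\Gamma)\oplus F(x\overline{\widehat{C}}\,\overline{\Gamma})$, invoking the dual Ideal-extension Lemma \ref{feL}(2).

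\emph{Main obstacle.} The substance of the argument lies in the claimed disjointness. A hypothetical common element, written out using Facts \ref{facts}(3), takes the form
\[
b\cdot u\cdot v\;\leq\;p+q\qquad\text{(case (i))},
\]
where $z_{1}\ll u$, $v\,\overline{\widehat{C}}\,z_{2}$, $p\,\overline{C}\,z_{3}$, $q\ll z_{4}$ with $z_{1},z_{3}\in\Gamma$ and $z_{2},z_{4}\notin\Gamma$; in case (ii) the analogous inequality is $u\cdot v\leq b+p+q$. The contradiction must be extracted by a case analysis that calls on the mixed axioms (MC1), (MC2), ($M\widehat{C}1$), ($M\widehat{C}2$), ($M\ll 1$), ($M\ll 2$) applied to the various $z_{i}$, at several points trading an element of $\overline{\Gamma}$ for a companion in $\Gamma$ via the strong witness property of $\Gamma$ (typically: $x\notin\Gamma$ is replaced by some $y\in\Gamma$ with $x\cdot y\,\overline{C}\,b$ in case (i), or $x\cdot y\ll b$ in case (ii)) so that the mixed axioms have something in $\Gamma$ on which to fire. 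This is the genuine technical hurdle; an ordinary prime filter would carry no information about its complement, and the strong version of the extension lemma (Remark \ref{Strong extension}) is what makes the mixed axioms usable.
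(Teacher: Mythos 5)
Your skeleton coincides exactly with the paper's: the $(\Leftarrow)$ directions from the definition of $R^{c}$, the construction of $\Gamma$ from $[a)$ and $I(x\overline{C}b)$ via the Strong filter-extension lemma, the observation that $\Gamma R^{c}\Delta$ together with $b\in\Delta$ amounts to $\Delta\supseteq [b)\oplus F(x\gg\Gamma)\oplus F(x\overline{\widehat{C}}\overline{\Gamma})$ and $\Delta\cap\bigl(I(x\overline{C}\Gamma)\oplus I(x\ll\overline{\Gamma})\bigr)=\varnothing$, and the final appeal to the Filter-extension lemma. All of that is right, and you have correctly identified that the entire weight of the lemma rests on the disjointness $F^{*}\cap I^{*}=\varnothing$.

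But you do not prove that disjointness; you only assert that ``the contradiction must be extracted by a case analysis that calls on the mixed axioms.'' That is precisely the nontrivial content of the lemma, and it is not routine: it is not a case analysis so much as a single carefully orchestrated chain. In case (i) one must merge the two $\overline{\Gamma}$-witnesses $x_{1},x_{4}$ into $x=x_{1}+x_{4}\notin\Gamma$, invoke the strong witness property to get $y\in\Gamma$ with $(x\cdot y)\overline{C}b$, form $z=x_{2}\cdot x_{3}\cdot y\in\Gamma$ (so $zCb$), prove $z\overline{C}(b\cdot k_{1})$ via (MC1) and $z\overline{C}(b\cdot k_{2})$ via (C2), combine with (C3) and the inequality $k_{1}+k_{2}=k_{4}\cdot k_{5}\cdot k_{6}$ to get $z\overline{C}(b\cdot k_{4}\cdot k_{5})$, derive $zC(b\cdot k_{4})$ from (MC1) applied to $z\ll k_{4}$, establish $(z\cdot x)\overline{C}(b\cdot k_{4})$, and only then fire (MC2) to produce $a\widehat{C}$-statement contradicting the witness for $F(x\overline{\widehat{C}}\overline{\Gamma})$. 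Case (ii) needs a different chain through $(M\ll 1)$, $(\ll 6)$, $(\ll 7)$ and $(M\ll 2)$. Since the mixed axioms were chosen exactly so that these derivations go through, whether they suffice is the whole question the lemma answers; leaving it as a ``technical hurdle'' to be cleared leaves the proof essentially unproved. To complete your argument you must exhibit these derivations explicitly.
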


\begin{proof} (i) Note that the proof is quite technical, so we will present it with full details. The reasons for this are twofold: first to help the reader to follow it more easily, and second, to skip the details in a similar proofs.

($\Leftarrow$) If $a\in \Gamma$ and $b\in \Delta$ then by the definition of $R^{c}$ we obtain  $aCb$.

($\Rightarrow$) Suppose $aCb$.\\
The proof will go on several steps.

\noindent {\bf  Step 1: construction of $\Gamma$.} Consider the ideal $I(x\overline{C}b)=\{x\in D: x\overline{C}b\}$ (Lemma \ref{FiltersIdeals1}). Since $aCb$, $a\not\in \{x\in D: x\overline{C}b\} $. Then $[a)\cap \{x\in D: x\overline{C}b\}=\varnothing$ and $[a)$ is a filter (see Facts \ref{facts}). By the Strong filter-extension lemma (see Lemma \ref{strong extension lemma}) there exists a prime filter $\Gamma$ such that  $[a)\subseteq \Gamma$ and $(\forall x\in \Gamma)(x\not\in \{x\in D: x\overline{C}b\}$ and    $(\forall x\not\in \Gamma)(\exists y\in \Gamma)(x\cdot y\in \{x\in D: x\overline{C}b\}$. From here we conclude that $\Gamma$ satisfies the following two properties:

\smallskip
($\#0$) $a\in \Gamma$,

\smallskip

($\#1$) If $x\in \Gamma$, then $xCb$, and

\smallskip

($\#2$) If $x\not\in \Gamma$, then there exists $y\in \Gamma$ such that $(x\cdot y)\overline{C} b$.

\smallskip
\noindent {\bf  Step 2: construction of $\Delta$.} This will be done in two sub-steps.

{\bf  Step 2.1} Consider the filters and ideals definable by $\Gamma$ as in Lemma \ref{FiltersIdeals2}

 \noindent $I(x\overline{C}\Gamma)=\{x\in D: (\exists y\in \Gamma)(x\overline{C}y)\}$, $F(x\overline{\widehat{C}}\overline{\Gamma})=\{x\in D:(\exists y\in \overline{\Gamma})(x\overline{\widehat{C}}y)\}$, $I(x\ll \overline{\Gamma})=\{x\in D:(\exists y\in \overline{\Gamma})( x\ll y)\}$, and
$F(x\gg\Gamma)=\{x\in D:(\exists y \in \Gamma)(x\gg y\}$.
In order to apply the Separation Lemma we will prove the following condition:

$(\#3)$   $F(x\gg\Gamma)\oplus F(x\overline{\widehat{C}}\overline{\Gamma})\oplus [b)\cap I(x\overline{C}\Gamma)\oplus I(x\ll \overline{\Gamma})=\varnothing$.

  Suppose that $(\#3)$  is not true,  then for some $t\in D$ we have

\noindent (1)   $t\in F(x\gg\Gamma)\oplus F(x\overline{\widehat{C}}\overline{\Gamma})\oplus [b)$ and

\noindent (2) $t\in I(x\overline{C}\Gamma)\oplus I(x\ll \overline{\Gamma})$.

It follows from (2) that $\exists k_1,k_2$ such that\\
(3) $k_1\in I(x\ll\overline{\Gamma})$ and\\
(4) $k_2\in I(x\overline{C}\Gamma)$ and\\
(5) $t=k_1+k_2$.

It follows from (1) that $\exists k_4,k_5,k_6\in D$ such that\\
(6) $k_4\in F(x\gg\Gamma)$ and\\
(7) $k_5\in F(x\overline{\widehat{C}}\overline{\Gamma})$ and\\
(8) $k_6\in[b)$ and\\
(9) $t=k_4\cdot k_5\cdot k_6$.

From (5) and (9) we get\\
(10) $k_1+k_2=k_4\cdot k_5\cdot k_6$.

It follows from (3), (4), (6) and (7)  that\\
(11) $\exists x_1\in\overline{\Gamma}$ such that $k_1\ll x_1$,\\
(12) $\exists x_2\in\Gamma$ such that $k_2\overline{C}x_2$,\\
(13) $\exists x_3\in \Gamma$ such that $x_3\ll k_4$,\\
(14) $\exists x_4\in\overline{\Gamma}$ such that $k_5\overline{\widehat{C}}x_4$.

Let $x=x_1+x_4$. Since $\overline{\Gamma}$ is an ideal, we obtain by (11) and (14) that\\
(15) $x\in\overline{\Gamma}$ and $x\not\in\Gamma$. Then by $(\#2)$ we get\\
(16) $\exists y\in\Gamma$ such that $(x\cdot y)\overline{C}b$.

Let $z=x_2\cdot x_3\cdot y$. Then by (12), (13) and (16) we obtain that\\
(17) $z\in\Gamma$\\
and by $(\#1)$ that\\
(18)$zCb$.

From $x_1\leq x$ and (11) by axiom $(\ll4)$ we get\\
(19) $k_1\ll x$.

From $x_4\leq x$ and (14) by axiom $(\widehat{C}2)$ we obtain\\
(20) $k_5\overline{\widehat{C}}x$.

From $z\leq x_2$ and (12) by axiom $(C2)$ we get\\
(21) $k_2\overline{C}z$.

From $z\leq x_3$ and (13) by axiom $(\ll4)$ we obtain\\
(22) $z\ll k_4$.

We shall show that the following holds\\
(23) $z\overline{C}(b\cdot k_1)$.

Suppose for the sake of contradiction that $zC(b\cdot k_1)$. From $b\cdot k_1\leq k_1$ and (19) by axiom $(\ll4)$ we get $(b\cdot k_1)\ll x$. From this fact and $zC(b\cdot k_1)$ by axiom $(MC1)$ we obtain $(b\cdot k_1)C(z\cdot x)$. But we also have $b\cdot k_1\leq b$, $z\cdot x\leq y\cdot x$, so by axiom $(C2)$ we get $bC(y\cdot x)$ - a contradiction with (16).

The following condition holds \\
(24) $z\overline{C}(b\cdot k_2)$.

To prove this suppose for the sake of contradiction  that $zC(b\cdot k_2)$. We also have $b\cdot k_2\leq k_2$, so by axiom $(C2)$ we get $zCk_2$ - a contradiction with (21).

Suppose that $zC(b\cdot(k_{1}+k_{2}))$. By axiom (C3) we have $zC(b\cdot k_{1})$ or $zC(b\cdot k_{2})$ - a contradiction with (23) and (24). Consequently $z\overline{C}(b\cdot(k_{1}+k_{2}))$ and by (10) we obtain $z\overline{C}(b\cdot k_{4}\cdot k_{5}\cdot k_{6})$. But $b\leq k_{6}$ (from (8)), so $b\cdot  k_{4}\cdot k_{5}\cdot k_{6}=b\cdot k_{4}\cdot k_{5}$. Consequently \\
(25) $z\overline{C}(b\cdot k_4\cdot k_5)$.

From (18) and (22) by axiom $(MC1)$ we get\\
(26) $zC(b\cdot k_4)$.

We shall show that the following condition holds\\
(27) $(z\cdot x)\overline{C}(b\cdot k_4)$

For to prove this suppose the contrary $(z\cdot x)C(b\cdot k_4)$. We also have $z\cdot x\leq y\cdot x$, $b\cdot k_4\leq b$, so by axiom $(C2)$ we get $(y\cdot x)Cb$ - a contradiction with (16).

From (25), (26) and (27) by axiom $(MC2)$ we obtain $x\widehat{C}k_5$ - a contradiction with (20). Consequently $(\#3)$ is true.

{\bf Step 2.2: the construction of $\Delta$.} Applying the Filter extension Lemma to $(\#3)$ we obtain a prime filter $\Delta$ (and this is just the required $\Delta$) such that:

\begin{enumerate}
\item  $F(x\gg\Gamma)=\{x\in D:(\exists y \in \Gamma)(x\gg y\}\subseteq\Delta$,

\item $F(x\overline{\widehat{C}}\overline{\Gamma})=\{x\in D:(\exists y\in \overline{\Gamma})(x\overline{\widehat{C}}y)\}\subseteq \Delta$,

\item $b\in \Delta$,

\item  $I(x\overline{C}\Gamma)=\{x\in D: (\exists y\in \Gamma)(x\overline{C}y)\}\cap \Delta=\varnothing$,

\item   $I(x\ll \overline{\Gamma})=\{x\in D:(\exists y\in \overline{\Gamma})( x\ll y)\}\cap \Delta=\varnothing$.

\end{enumerate}

\noindent {\bf Step 3: proof of $\Gamma R^{c} \Delta$.} We will verify the four cases  of the definition of $R^{c}$.

\begin{itemize}
\item {\bf Case 1: $ y\in \Gamma$ and $x\in \Delta$.} We have to show $yCx$. Suppose $y\overline{C}x$. Then $x\overline{C}y$ and by $y\in \Gamma$ we get $x\in I(x\overline{C}\Gamma)$. Then by 4. $x\not\in \Delta$ - a contradiction, hence $yCx$.

\item {\bf Case 2: $y\in \Gamma$ and $x\not\in \Delta$.} Suppose $y\ll x$. Then $x\gg y$ and $y\in \Gamma$ implies $x\in F(x\gg \Gamma)$. By (1) $x\in \Delta$ - a contradiction, hence $y\not\ll x$.

\item {\bf Case 3: $y\not\in \Gamma$ and $x\in \Delta$.} Suppose $x\ll y$. Then $x\in I(x\ll\overline{\Gamma})$ and by 5. $x\not\in \Delta$ - a contradiction. Hence $x\not\ll y$.

\item {\bf Case 4: $y\not\in \Gamma$ and $x\not\in \Delta$.} Suppose $y\overline{\widehat{C}}x$. Then $x\overline{\widehat{C}}y$ and by 2. we obtain $x\in \Delta$ - a contradiction. Hence $y\widehat{C}x$.

\end{itemize}

Thus we have constructed prime filters $\Gamma$ and $\Delta$ such that: $a\in \Gamma$, $b\in \Delta$ (item 3 from Step 2.2) and $\Gamma R^{c}\Delta$ (Step 3).

\bigskip
\noindent {\bf Proof of (ii).} ($\Leftarrow$) If $a\in \Gamma$ and $b\not\in \Delta$ then by the definition of $R^{c}$ we obtain  $a\not\ll b$.

$(\Rightarrow)$ Suppose  $a\not\ll b$. The proof, as in (i), will go on several  steps.

\noindent {\bf  Step 1: construction of $\Gamma$.} Consider the ideal  $I(x\ll b)=\{x\in D:x\ll b\}$ (Lemma \ref{FiltersIdeals1}).

Since $a\not\ll b$, $a\not\in \{x\in D: x\ll b\} $. Then $[a)\cap \{x\in D: x\ll b\}=\varnothing$ and $[a)$ is a filter (see FACTS \ref{facts}). By the Strong filter-extension lemma (Lemma \ref{strong extension lemma}) there exists a prime filter $\Gamma$ such that  $[a)\subseteq \Gamma$ and $(\forall x\in \Gamma)(x\not\in \{x\in D: x\ll b\})$ and    $(\forall x\not\in \Gamma)(\exists y\in \Gamma)(x\cdot y\in \{x\in D: x\ll b\})$. From here we conclude that $\Gamma$ satisfies the following  properties:

\smallskip
($\#0$) $a\in \Gamma$,

\smallskip

($\#1$) If $x\in \Gamma$, then $x\not \ll b$, and

\smallskip

($\#2$) If $x\not\in \Gamma$, then there exists $y\in \Gamma$ such that $(x\cdot y)\ll b$.

\noindent {\bf  Step 2: construction of $\Delta$.} This will be done in two sub-steps.

{\bf  Step 2.1} Consider the filters and ideals definable by $\Gamma$ as in Lemma \ref{FiltersIdeals2}

 \noindent $I(x\overline{C}\Gamma)=\{x\in D: (\exists y\in \Gamma)(x\overline{C}y)\}$, $F(x\overline{\widehat{C}}\overline{\Gamma})=\{x\in D:(\exists y\in \overline{\Gamma})(x\overline{\widehat{C}}y)\}$, $I(x\ll \overline{\Gamma})=\{x\in D:(\exists y\in \overline{\Gamma})( x\ll y)\}$, and
$F(x\gg\Gamma)=\{x\in D:(\exists y \in \Gamma)(x\gg y\}$.
In order to apply the Filter-extension Lemma (Lemma \ref{feL})  we will prove the following condition:

$(\#3)$   $F(x\gg\Gamma)\oplus F(x\overline{\widehat{C}}\overline{\Gamma})\cap I(x\ll \overline{\Gamma})\oplus I(x\overline{C}\Gamma)\oplus(b]=\varnothing$

Suppose that $(\#3)$ is not true. Consequently $\exists t$ such that\\
(1) $t=k_1\cdot k_2=k_4+k_5+k_6$ for some $k_1,k_2,k_4,k_5,k_6\in D$ and\\
(2) $\exists x_1\in\Gamma$ such that $x_1\ll k_1$,\\
(3) $\exists x_2\in\overline{\Gamma}$ such that $k_2\overline{\widehat{C}}x_2$,\\
(4) $\exists x_3\in\overline{\Gamma}$ such that $k_4\ll x_3$,\\
(5) $\exists x_4\in\Gamma$ such that $k_5\overline{C}x_4$,\\
(6) $k_6\leq b$.

Let $z=x_2+x_3$. Then by (3) and (4) we obtain  $z\in\overline{\Gamma}$. By axiom $(\widehat{C}2)$ we get\\
(7) $k_2\overline{\widehat{C}}z$.

By (4) and  axiom $(\ll4)$ we get\\
(8) $k_4\ll z$.

By $z\not\in\Gamma$ and $(\#2)$ we have\\
(9) $\exists y\in\Gamma$ such that $(z\cdot y)\ll b$.

Let $x=x_1\cdot x_4\cdot y\cdot a$. Then by $(\# 0)$, (2), (5) and (9) we get $x\in\Gamma$. By axiom $(\ll4)$ we get\\
(10) $x\ll k_1$.

By (5), $x\leq x_{4}$ and axiom $(C2)$ we get\\
(11) $k_5\overline{C}x$.

 From $x\in\Gamma$  by $(\#1)$ we obtain\\
(12) $x\not\ll b$.

From (10) by axiom $(\ll4)$ we get\\
(13) $x\ll (b+k_1)$

From (7) by axiom $(\widehat{C}2)$ we obtain\\
(14) $z\overline{\widehat{C}}(b+k_2)$.

From (9) by axiom $(\ll4)$ we get\\
(15) $(z\cdot y)\ll (b+k_2)$.

From (14) and (15) by axiom $(M\ll1)$ we obtain $y\ll (b+k_2)$. We also have $x\leq y$ and by axiom $(\ll4)$ we get\\
(16) $x\ll (b+k_2)$.

From (13) and (16) by axiom $(\ll6)$ we get $x\ll (b+k_1)\cdot(b+k_2)$. We have $(b+k_1)\cdot(b+k_2)=b+k_1\cdot k_2=b+k_4+k_5+k_6=b+k_4+k_5$ (since $k_6\leq b$ from (6)). Thus:\\
(17) $x\ll (b+k_4+k_5)$.

Suppose  (in order to obtain a contradiction) that $x\ll (b+k_4)$. From (9) and $x\cdot z\leq z\cdot y$ (which follows from the definitions of $x$ and $z$) by axiom $(\ll4)$ we obtain $(x\cdot z)\ll b$. Using this fact, (8), $x\ll (b+k_4)$ and axiom $(\ll7)$ we get $x\ll b$ - a contradiction with (12). Consequently\\
(18) $x\not\ll (b+k_4)$.

From (11) and (17) by axiom $(M\ll2)$ we obtain $x\ll (b+k_4)$ - a contradiction with (18). Consequently $(\#3)$ is true.

\smallskip

{\bf Step 2.2: the construction of $\Delta$.} Applying the Filter-extension Lemma to $(\#3)$ we obtain a prime filter $\Delta$ (and this is just the required $\Delta$) such that:

\begin{enumerate}
\item  $F(x\gg\Gamma)=\{x\in D:(\exists y \in \Gamma)(x\gg y\}\subseteq\Delta$,

\item $F(x\overline{\widehat{C}}\overline{\Gamma})=\{x\in D:(\exists y\in \overline{\Gamma})(x\overline{\widehat{C}}y)\}\subseteq \Delta$,

\item $b\not\in \Delta$,

\item  $I(x\overline{C}\Gamma)=\{x\in D: (\exists y\in \Gamma)(x\overline{C}y)\}\cap \Delta=\varnothing$,

\item   $I(x\ll \overline{\Gamma})=\{x\in D:(\exists y\in \overline{\Gamma})( x\ll y)\}\cap \Delta=\varnothing$.

\end{enumerate}
\noindent {\bf Step 3: proof of $\Gamma R^{c} \Delta$.} The proof is the same as in the corresponding step in (i).

\smallskip

To conclude: we have constructed prime filters $\Gamma, \Delta$ such that $\Gamma R^{c}\Delta$, $a\in \Gamma$ and $b\not\in \Delta$, which finishes the proof of the lemma.$\square$

\end{proof}

\begin{lemma}\label{dual C}
(i) $a\widehat{C}b$ iff $(\exists \Gamma,\Delta \in PI(D))(a\in \Gamma$ and $b\in \Delta$ and $\Gamma\widehat{R}^{c}\Delta)$.

(ii) $a\widehat{C}b$ iff $(\exists \Gamma,\Delta \in PF(D))(a\not\in \Gamma$ and $b\not\in \Delta$ and $\Gamma R^{c}\Delta)$.

(iii) $a\not\gg b$ iff $(\exists \Gamma,\Delta \in PI(D))(a\in \Gamma$ and $b\not\in \Delta$ and $\Gamma\widehat{R}^{c}\Delta)$.

(iv) $a\not\gg b$ iff $(\exists \Gamma,\Delta \in PF(D))(a\not\in \Gamma$ and $b\in \Delta$ and $\Gamma R^{c}\Delta)$.
\end{lemma}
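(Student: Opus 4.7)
The plan is to derive all four parts from Lemma \ref{C} using the duality principle (Observation \ref{duality}) together with the complementation bijection between prime filters and prime ideals (Fact \ref{facts}(1)).

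First I observe that (i) is merely a reformulation of (ii), and similarly (iii) reformulates (iv). Indeed, by definition $\Gamma\widehat{R}^{c}\Delta\Leftrightarrow\overline{\Gamma}R^{c}\overline{\Delta}$, and $\Gamma\in PI(D)$ iff $\overline{\Gamma}\in PF(D)$. Substituting $\Gamma\mapsto\overline{\Gamma}$ and $\Delta\mapsto\overline{\Delta}$ converts the prime ideals of (i) into prime filters, turning $a\in\Gamma$ into $a\notin\overline{\Gamma}$ and $b\in\Delta$ into $b\notin\overline{\Delta}$; this produces the right-hand side of (ii). The same rewriting converts (iii) into (iv). So it is enough to establish (ii) and (iv).

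Statements (ii) and (iv) are the exact duals, in the sense of Observation \ref{duality}, of Lemma \ref{C}(i) and Lemma \ref{C}(ii) respectively: the duality swaps $C\leftrightarrow\widehat{C}$ and $\ll\leftrightarrow\gg$, and under complementation of prime filters it also swaps $\in$ and $\notin$. Accordingly, I propose to prove (ii) by running the three-step argument of Lemma \ref{C}(i) with every construct replaced by its dual, and likewise for (iv) from Lemma \ref{C}(ii). Concretely, in proving (ii) from $a\widehat{C}b$, Step 1 constructs a prime filter $\Gamma$ with $a\notin\Gamma$ by applying the Strong Ideal-extension Lemma (Lemma \ref{strong extension lemma}(2)) to the ideal $(a]$ and the filter $F(x\overline{\widehat{C}}b)$ from Lemma \ref{FiltersIdeals1}(2), whose disjointness encodes $a\widehat{C}b$; the resulting $\Gamma=\overline{I}$ satisfies the dual analogues of $(\#1)$ and $(\#2)$, namely \emph{if $x\notin\Gamma$ then $x\widehat{C}b$} and \emph{if $x\in\Gamma$ then there exists $y\notin\Gamma$ with $(x+y)\overline{\widehat{C}}b$}. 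Step 2 then builds $\Delta$ from the four auxiliary sets of Lemma \ref{FiltersIdeals2} with the roles of filter and ideal swapped: one verifies a condition $(\#3^{*})$ dual to $(\#3)$ where the $\oplus$ of filters becomes the $\oplus$ of ideals and the outer intersection is relocated, using axioms $(\widehat{C}1)$–$(\widehat{C}5)$, $(M\widehat{C}1)$, $(M\widehat{C}2)$, and $(M\ll 1)$ in place of $(C1)$–$(C5)$, $(MC1)$, $(MC2)$, and $(M\ll 2)$; then the Ideal-extension Lemma yields a prime ideal $J$ disjoint from the appropriate filter, and $\Delta:=\overline{J}$ is the required prime filter with $b\notin\Delta$. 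Step 3, verifying $\Gamma R^{c}\Delta$, dualizes unchanged. The proof of (iv) from Lemma \ref{C}(ii) is the analogous dualization, replacing the ideal $I(x\ll b)$ of that proof with the filter $F(x\gg b)$ and using the dual mixed axioms.

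The main obstacle is purely bookkeeping rather than conceptual: each of the many axiom invocations in the proof of Lemma \ref{C} must be paired with its dualized counterpart, and the sides of every inequality $a\leq b$ and every membership reversed. Since Observation \ref{duality} already guarantees that every such dualized axiom is an axiom of EDC-lattices, the argument goes through verbatim. In keeping with the convention announced by the authors that proofs of dual statements are omitted, the cleanest written version would simply cite Lemma \ref{C} and the duality principle once, and then deduce (i) and (iii) from (ii) and (iv) by the complementation bijection as above.
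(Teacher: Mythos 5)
Your proposal is correct and follows essentially the same route as the paper: the authors prove (i) and (iii) by dualizing the proof of Lemma \ref{C} (explicitly noting that the Strong ideal-extension Lemma is the tool in the dual setting) and then obtain (ii) and (iv) as corollaries via the prime filter/prime ideal complementation, which is exactly your argument read in the opposite order. The only discrepancy is a harmless bookkeeping slip: the proof of Lemma \ref{C}(i) does not actually invoke $(M\ll 2)$, so its dualization needs only $(M\widehat{C}1)$ and $(M\widehat{C}2)$; the $(M\ll 1)/(M\ll 2)$ swap is relevant only when dualizing part (ii).
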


\begin{proof} (i) by duality from Lemma \ref{C}. Note that in this case Strong ideal-extension Lemma is used. The proof can follow in a "dual way" the steps of the proof of Lemma \ref{C} (i).

(ii) is a corollary from (i).

(iii) by duality from Lemma \ref{C} (ii) with the same remark as above.

(iv) is a corollary from (iii).$\square$

\end{proof}

\begin{lemma}\label{canonical embedding}  Let $(W^{c}, R^{c})$ be the canonical structure of $\underline{D}=(D,C, \widehat{C}, \ll)$ and  $h(a)=\{U\in PF(D):a\in U\} $  be the Stone mapping from $D$ into the distributive lattice of all subsets of $W^{c}$. Then $h$ is an embedding of $\underline{D}$ into the EDC-lattice over $(W^{c}, R^{c})$.

\end{lemma}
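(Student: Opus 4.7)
The plan is simply to assemble the ingredients already proved. By the classical Stone representation theorem for bounded distributive lattices, the mapping $h(a)=\{U\in PF(D):a\in U\}$ is already an injective bounded-lattice homomorphism from $\underline{D}$ into the powerset lattice of $W^{c}$; that is, $h(0)=\varnothing$, $h(1)=W^{c}$, $h(a\cdot b)=h(a)\cap h(b)$, $h(a+b)=h(a)\cup h(b)$, and $a\leq b$ iff $h(a)\subseteq h(b)$. So the only thing that remains is to verify that $h$ preserves and reflects each of the three mereotopological relations $C$, $\widehat{C}$, $\ll$, interpreted on the image according to the definitions (Def $C_{R}$), (Def $\widehat{C}_{R}$), (Def $\not\ll_{R}$) from Section \ref{relmodels} applied to the canonical structure $(W^{c},R^{c})$.

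For $C$: unwinding (Def $C_{R}$) for the canonical relation gives $h(a)\,C_{R^{c}}\,h(b)$ iff there exist $\Gamma,\Delta\in PF(D)$ with $a\in\Gamma$, $b\in\Delta$ and $\Gamma R^{c}\Delta$. This is exactly the right-hand side of Lemma \ref{C}(i), which asserts this is equivalent to $aCb$. For $\widehat{C}$: unwinding (Def $\widehat{C}_{R}$) gives $h(a)\,\widehat{C}_{R^{c}}\,h(b)$ iff there exist $\Gamma,\Delta\in PF(D)$ with $a\notin\Gamma$, $b\notin\Delta$ and $\Gamma R^{c}\Delta$, which is the content of Lemma \ref{dual C}(ii), equivalent to $a\widehat{C}b$. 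For $\ll$: unwinding (Def $\not\ll_{R}$) gives $h(a)\not\ll_{R^{c}}h(b)$ iff there exist $\Gamma,\Delta\in PF(D)$ with $a\in\Gamma$, $b\notin\Delta$ and $\Gamma R^{c}\Delta$; this is Lemma \ref{C}(ii), equivalent to $a\not\ll b$, and negating both sides yields $a\ll b$ iff $h(a)\ll_{R^{c}}h(b)$.

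Putting these four items together ($h$ a bounded-lattice embedding, plus the three equivalences for $C$, $\widehat{C}$, $\ll$) immediately gives that $h$ is an embedding of EDC-lattices, as required. The proof is therefore essentially bookkeeping: the genuine obstacle — constructing, from a given instance of $aCb$, $a\widehat{C}b$ or $a\not\ll b$, a pair of prime filters related by $R^{c}$ that witness the relation — has already been overcome in Lemmas \ref{C} and \ref{dual C}, whose proofs invoked the Strong filter-extension and Strong ideal-extension lemmas together with the mixed axioms $(MC1)$, $(MC2)$, $(M\ll1)$, $(M\ll2)$ of EDC-lattices.
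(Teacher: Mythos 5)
Your proposal is correct and follows essentially the same route as the paper: invoke the classical Stone embedding for the lattice part, then observe that preservation and reflection of $C$, $\widehat{C}$ and $\ll$ are exactly the content of Lemma \ref{C}(i), Lemma \ref{dual C}(ii) and Lemma \ref{C}(ii) once the definitions (Def $C_{R}$), (Def $\widehat{C}_{R}$), (Def $\not\ll_{R}$) are unwound for $(W^{c},R^{c})$. The paper additionally cites Lemma \ref{dual C}(iv), but by symmetry of $R^{c}$ that clause is interchangeable with Lemma \ref{C}(ii), so your bookkeeping is complete.
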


\begin{proof} It is a well known fact that $h$ is an embedding of  distributive lattice into the distributive lattice of all subsets of the set of prime filters $PF(D)$ (see, \cite{Stone,bd}). The only thing which have to be done is to show the following equivalences for all $a,b\in D$:

(i) $aCb$ iff $h(a)C_{R^{c}} h(b)$,

(ii) $a\widehat{C}b$ iff $h(a)\widehat{C}_{R^{c}}h(b)$

(iii) $a\ll b$ iff $h(a) \ll_{R^{c}}h(b)$.

Note that these equivalences are another equivalent reformulation of Lemma \ref{C} (i) and (ii) and Lemma \ref{dual C} (ii) and (iv). $\square$

\end{proof}

\begin{theorem}\label{relational representation} {\bf Relational representation Theorem of EDC-latices.}  Let $\underline{D}=(D,C,\widehat{C},\ll)$ be an EDC-lattice. Then there is a relational system $\underline{W}=(W,R)$ with reflexive and symmetric $R$ and an embedding $h$ into the EDC-lattice of all subsets of $W$.
\end{theorem}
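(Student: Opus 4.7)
The plan is essentially to assemble the machinery already built in Section 3.3, since the theorem is a direct synthesis of the preceding lemmas. I would take the relational system $(W,R)$ to be the canonical structure $(W^{c}, R^{c})$, where $W^{c} = PF(D)$ is the set of prime filters of $\underline{D}$ and $R^{c}$ is the quaternary-conjunction relation defined at the beginning of Section \ref{relrepresentation}. The required embedding is the Stone map $h(a) = \{\Gamma \in PF(D) : a \in \Gamma\}$.

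First I would invoke the lemma (established just after the definition of $R^{c}$) showing that $R^{c}$ is reflexive and symmetric; that handles the structural requirement on $(W,R)$. Next, it is classical Stone theory for bounded distributive lattices that $h$ is an injective homomorphism of bounded distributive lattices into the powerset lattice of $PF(D)$, so $h$ preserves $0, 1, +, \cdot$ and reflects $\leq$. Finally, I would cite Lemma \ref{canonical embedding} to obtain the three equivalences
\[
aCb \iff h(a) C_{R^{c}} h(b), \quad a\widehat{C}b \iff h(a) \widehat{C}_{R^{c}} h(b), \quad a \ll b \iff h(a) \ll_{R^{c}} h(b),
\]
which state precisely that $h$ preserves and reflects the three mereotopological primitives. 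Combining these observations, $h$ is an embedding of $\underline{D}$ into the discrete EDC-lattice of all subsets of $W^{c}$ (a legitimate EDC-lattice by Lemma \ref{RelationEDCL}), which is exactly what the theorem asserts with $W = W^{c}$ and $R = R^{c}$.

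The main obstacle to this theorem has already been overcome in the preceding technical lemmas, specifically in the construction of $\Delta$ (Steps 2.1 and 2.2) in the proofs of parts (i) and (ii) of Lemma \ref{C}, where the mixed axioms $(MC1), (MC2), (M\ll1), (M\ll2)$ and the Strong Filter-Extension Lemma were used to establish the disjointness condition $(\#3)$. Once those lemmas are in hand, the representation theorem itself reduces to a short bookkeeping argument. Hence my write-up would be essentially a two-line proof that appeals to Lemma \ref{canonical embedding} and the reflexivity/symmetry lemma for $R^{c}$, with the remark that all the nontrivial content has been absorbed into those earlier results.
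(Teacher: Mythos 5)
Your proposal is correct and matches the paper's own argument: the paper proves Theorem \ref{relational representation} as an immediate corollary of Lemma \ref{canonical embedding}, which itself packages the Stone embedding together with the three equivalences from Lemmas \ref{C} and \ref{dual C}, exactly as you describe. Your additional remarks on reflexivity/symmetry of $R^{c}$ and on where the real work was done are accurate but add nothing beyond what the paper already established.
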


\begin{proof} The theorem is a corollary of Lemma \ref{canonical embedding}.$\square$

\end{proof}

\begin{corollary}\label{embedding in contact algebras} Every EDC-lattice can be isomorphically embedded into a contact algebra.

\end{corollary}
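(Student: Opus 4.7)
The plan is to read the corollary directly off Theorem~\ref{relational representation}, once one checks that the full powerset EDC-lattice over a relational system is actually a contact algebra (in which the two new relations $\widehat{C}$ and $\ll$ coincide with their standard Boolean definitions).

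First I would invoke Theorem~\ref{relational representation} to obtain a reflexive-symmetric relational system $(W,R)$ together with an embedding $h \colon \underline{D} \to (P(W), C_{R}, \widehat{C}_{R}, \ll_{R})$, where $P(W)$ denotes the full powerset lattice. Since $P(W)$ is not merely a bounded distributive lattice but a Boolean algebra under set-theoretic complement $a^{*}=W\setminus a$, it is enough to show that the target structure, equipped with this complement, is in fact a contact algebra in the sense of the Introduction, and that the two extra primitives on it are the ones definable from $C_{R}$ via complementation.

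The main (and only non-immediate) step is then the following direct verification on $P(W)$:
\begin{itemize}
\item $a\widehat{C}_{R}b$ iff $a^{*}C_{R}b^{*}$, because both sides unfold to ``$\exists x\notin a, \exists y\notin b, xRy$'';
\item $a\ll_{R} b$ iff $a\,\overline{C_{R}}\,b^{*}$, because both sides are the negation of ``$\exists x\in a, \exists y\notin b, xRy$''.
\end{itemize}
Together with the fact, recalled in Section~\ref{relmodels}, that $(P(W), C_{R})$ satisfies the contact axioms (C1)--(C5) over the Boolean algebra $P(W)$, this shows that $(P(W), C_{R}, \widehat{C}_{R}, \ll_{R})$ is a contact algebra in which $\widehat{C}_{R}$ and $\ll_{R}$ are precisely the standard definable relations.

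Since $h$ preserves $0,1,+,\cdot, C, \widehat{C}, \ll$ by Lemma~\ref{canonical embedding} and is injective, it is an isomorphic embedding of the EDC-lattice $\underline{D}$ into the contact algebra $(P(W),C_{R})$ enriched with its derived $\widehat{C}$ and $\ll$. I do not anticipate any genuine obstacle: the substantive content has already been absorbed into the relational representation theorem, and what remains is the routine set-theoretic dualisation above. The only mild subtlety is to make sure one uses \emph{full} subsets of $W$ (not an arbitrary sublattice), since complementation is needed to interpret the contact-algebra signature.
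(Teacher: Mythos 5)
Your proposal is correct and follows essentially the same route as the paper: the paper's proof simply observes that the powerset lattice of $W$ from Theorem~\ref{relational representation} is a Boolean algebra and concludes immediately. The only difference is that you explicitly verify that $\widehat{C}_{R}$ and $\ll_{R}$ on the full powerset coincide with the relations definable from $C_{R}$ via set complement --- a check the paper leaves implicit (having arranged the definitions in Section~\ref{relmodels} precisely so that this holds), and which is a reasonable detail to spell out.
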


\begin{proof} Since the lattice of all subsets  of a given set is a Boolean algebra, then this is a corollary of Theorem \ref{relational representation}. $\square$

\end{proof}

The following theorem states that the axiom system of EDC-lattice can be considered as an axiomatization of the universal fragment of contact algebras in the language of EDC-lattices.

\begin{theorem} \label{Universal fragment}  Let $A$ be an universal first-order formula in the language of EDC-lattices. Then $A$ is a consequence from the axioms of EDC-lattice iff $A$ is true in all contact algebras.

\end{theorem}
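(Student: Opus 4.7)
The plan is to prove the two directions separately, both of which reduce to standard observations once the embedding theorem (Corollary \ref{embedding in contact algebras}) and the fact that all EDC-lattice axioms hold in contact algebras are in hand.

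For the direction $(\Rightarrow)$ (soundness), I would invoke the Corollary following Lemma \ref{RelationEDCL}, which states that the axioms for $C$, $\widehat{C}$ and $\ll$ are all true in every contact algebra, together with the trivial observation that the purely lattice-theoretic axioms (bounded distributive lattice laws) likewise hold in any Boolean algebra. Hence every contact algebra, viewed as a structure in the language of EDC-lattices, is a model of the EDC-lattice axioms. By the completeness/soundness of first-order logic, any universal $A$ derivable from these axioms is automatically true in every contact algebra.

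For the direction $(\Leftarrow)$, the essential ingredient is the model-theoretic fact that universal first-order formulas are preserved under substructures: if $\underline{D}$ is a substructure of $\underline{B}$ and $A$ is universal, then $\underline{B}\models A$ implies $\underline{D}\models A$. I would proceed as follows. Let $\underline{D}=(D,C,\widehat{C},\ll)$ be an arbitrary EDC-lattice. By Corollary \ref{embedding in contact algebras}, there is an isomorphic embedding $h:\underline{D}\hookrightarrow \underline{B}$ for some contact algebra $\underline{B}$; in particular the image $h(\underline{D})$ is a substructure of $\underline{B}$ in the signature $(\leq,0,1,+,\cdot, C,\widehat{C},\ll)$ of EDC-lattices (note: we view the Boolean algebra $\underline{B}$ only in this signature, forgetting complementation, so that the notion of substructure makes sense). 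By hypothesis $A$ holds in every contact algebra, so in particular $\underline{B}\models A$. Since $A$ is universal and $h(\underline{D})$ is a substructure of $\underline{B}$, we obtain $h(\underline{D})\models A$, and by the isomorphism $\underline{D}\models A$. Since $\underline{D}$ was arbitrary, $A$ is true in every model of the EDC-lattice axioms, and by Gödel's completeness theorem $A$ is a consequence of those axioms.

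There is essentially no hard obstacle here — the theorem is a direct packaging of Corollary \ref{embedding in contact algebras} together with the classical preservation theorem for universal sentences. The only subtle point worth stating explicitly is the reduction of signature: contact algebras carry the extra Boolean operation $^{*}$, and the relations $\widehat{C}, \ll$ are defined in them via $^{*}$, whereas in an EDC-lattice $\widehat{C}$ and $\ll$ are primitive symbols. The embedding of Corollary \ref{embedding in contact algebras} is an embedding in the EDC-signature (preserving $\leq,0,1,+,\cdot,C,\widehat{C},\ll$), which is exactly what is needed to apply the substructure preservation lemma; this is what justifies transferring $A$ from $\underline{B}$ down to $\underline{D}$.
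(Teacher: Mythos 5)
Your proof is correct and follows essentially the same route as the paper's (very terse) argument: the soundness direction via the fact that every contact algebra satisfies the EDC-lattice axioms, and the converse via Corollary \ref{embedding in contact algebras} together with preservation of universal sentences under substructures. You merely spell out the details (including the signature-reduction point) that the paper leaves implicit.
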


\begin{proof} The proof is a consequence from  Corollary \ref{embedding in contact algebras} and   the fact that all axioms of EDC-lattice are universal first-order conditions and that $A$ is also an universal first-order condition.$\square$
\end{proof}
\section{Relations to other mereotopologies}
In this section we will compare EDC-lattices with other two mereotopologies: the \emph{relational mereotopology} and \emph{RCC-8}.

\subsection{Relational mereotopology}

Relational mereotopology is based on \emph{mereotopological structures} introduced in \cite{NV}. These are relational structures in the form $(W, \leq, O, \widehat{O}, \ll, C, \widehat{C} )$ axiomatizing the basic mereological relations part-of $\leq$, overlap $O$ and dual overlap (underlap) $\widehat{O}$, and the basic mereotopological relations non-tangential part-of $\ll$, contact $C$ and dual contact $\widehat{C}$. These relations satisfy the following list of universal first-order axioms:

\(\begin{array}{llll}
        (\leq 0)    &   a\leq b$ and $b\leq a \rightarrow a=b   & (\leq 1)    &   a\leq a,\\
        (\leq 2)    &   a\leq b$ and $b\leq c \rightarrow a\leq c\\\\
        (O1)    &   aOb \rightarrow bOa                     &(\widehat{O}1)      &   a\widehat{O}b \rightarrow  b\widehat{O}a\\
        (O2)    &   aOb \rightarrow aOa                     &(\widehat{O}2)      &   a\widehat{O}b \rightarrow a\widehat{O}a\\
        (\overline{O}\leq) & a\overline{O}a\rightarrow a\leq b
        &(\overline{\widehat{O}}\leq)&
        b\overline{\widehat{O}}b\rightarrow a\leq b \\
        (O\leq) &   aOb$ and $b\leq c \rightarrow  aOc      &(\widehat{O}\leq)   &   c\leq a$ and $a\widehat{O}b \rightarrow c\widehat{O}b\\\\
        (O\widehat{O})  &   aOa$ or $a\widehat{O}a  &   (\leq O\widehat{O}) &   c\overline{O}a$ and $c\overline{\widehat{O}}b \rightarrow a\leq b\\\\
        (C)    &   aCb\rightarrow  bCa                  &(\widehat{C})               &   a\widehat{C}b \rightarrow b\widehat{C}a\\
        (CO1)   &   aOb \rightarrow aCb                 &(\widehat{C}\widehat{O}1)   &   a\widehat{O}b \rightarrow a\widehat{C}b\\
        (CO2)   &   aCb\rightarrow aOa                  &(\widehat{C}\widehat{O}2)   &   a\widehat{C}b \rightarrow a\widehat{O}a\\
        (C\leq) &   aCb$ and $b\leq c \rightarrow aCc   &(\widehat{C}\leq)           &   a\widehat{C}b $ and $c\leq b \rightarrow a\widehat{C}c\\\\
        (\ll\leq 1)         &   a\ll b \rightarrow a\leq b\\
        (\ll\leq 2)         &   a\leq b$ and $b\ll c \rightarrow a\ll c &(\ll\leq 3)        &   a\ll b$ and $b\leq c \rightarrow a\ll c\\\\
        (\ll O)             &   a\overline{O}a \rightarrow  a\ll b      &(\ll \widehat{O})  &   b\overline{\widehat{O}}b \rightarrow a\ll b\\
        (\ll CO)            &   aCb$ and $b\ll c \rightarrow aOc & (\ll \widehat{C}\widehat{O})
                                                                                &   c\ll a$  and $ a\widehat{C}b \rightarrow c\widehat{O}b\\
        (\ll C\widehat{O})  &   c\overline{C}a$ and $ c\overline{\widehat{O}}b \rightarrow a\ll b   &(\ll \widehat{C} O)
                                                                        &   c\overline{O}a$ and $c\overline{\widehat{C}}b \rightarrow a\ll b.
    \end{array}\)\\\\

Note that all axioms of mereotopological structures  are universal first-order conditions which are true in contact algebras under the standard definitions of the three basic mereological relations.

It is proved in \cite{NV} that each mereotopological structure is embeddable into a contact algebra (Theorem 26).

The following theorem relates EDC-lattices to mereotopological structures.

\begin{theorem} Every EDC-lattice is a mereotopological structure under the standard definitions of the basic mereological relations.

\end{theorem}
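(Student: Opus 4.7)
The plan is to adopt the standard definitions $aOb \leftrightarrow a \cdot b \neq 0$ and $a\widehat{O}b \leftrightarrow a+b \neq 1$, retain the lattice order as $\leq$, and then verify the axioms of mereotopological structures group by group. The heavy lifting has already been done when we chose the axioms of EDC-lattice, so most of the verification reduces to rewriting axioms. I would organize the check into four blocks, ordered by increasing subtlety.

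First, I would dispose of the purely mereological block. The three $\leq$-axioms are immediate from the fact that $\leq$ is a lattice order. The axioms $(O1), (O2), (\widehat{O}1), (\widehat{O}2), (O\leq), (\widehat{O}\leq)$ follow by unfolding the definitions of $O$ and $\widehat{O}$ and using commutativity, idempotence, and monotonicity of $\cdot$ and $+$. The axioms $(\overline{O}\leq)$ and $(\overline{\widehat{O}}\leq)$ become: if $a=a\cdot a=0$ then $a\leq b$, and dually. The disjunctive axiom $(O\widehat{O})$ amounts to $a\neq 0$ or $a\neq 1$, i.e.\ non-triviality of the lattice. The key mereological axiom $(\leq O\widehat{O})$ is where distributivity earns its keep: from $c\cdot a=0$ and $c+b=1$ we deduce $a = a\cdot(c+b) = a\cdot c + a\cdot b = a\cdot b \leq b$.

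Next I would handle the topological blocks. The axioms $(C), (CO1), (CO2), (C\leq)$ translate respectively into $(C4)$, $(C5)$, $(C1)$ combined with $a\cdot a=a$, and $(C2)$; the six $\widehat{C}$-axioms are obtained dually from $(\widehat{C}1)-(\widehat{C}5)$. For $\ll$ alone, $(\ll\leq 1)$ is $(\ll 3)$; the transitivity-style rules $(\ll\leq 2)$ and $(\ll\leq 3)$ are exactly $(\ll 4)$; and $(\ll O), (\ll\widehat{O})$ follow because $a\overline{O}a$ forces $a=0$ (then use $(\ll 1)$ and $(\ll 4)$) and dually $b\overline{\widehat{O}}b$ forces $b=1$.

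The most substantive block is the four mixed mereotopological axioms; this is where the specific mixed axioms of EDC-lattice were designed to kick in. For $(\ll CO)$, given $aCb$ and $b\ll c$, I apply $(MC1)$ in the form $bCa \land b\ll c \Rightarrow bC(a\cdot c)$, then use $(C1)$ to conclude $a\cdot c \neq 0$, i.e.\ $aOc$. The axiom $(\ll\widehat{C}\widehat{O})$ is the exact dual via $(M\widehat{C}1)$ and $(\widehat{C}1)$. The remaining two are the interesting ones. For $(\ll C\widehat{O})$, from $c\overline{\widehat{O}}b$ we get $c+b=1$, so by $(\ll 2)$ and $(\ll 4)$ we have $a \ll c+b$; combined with $c\overline{C}a$, axiom $(M\ll 2)$ (with the natural substitution) yields $a\ll b$. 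Dually, for $(\ll\widehat{C}O)$, from $c\overline{O}a$ we obtain $c\cdot a = 0$, hence $(c\cdot a)\ll b$ trivially by $(\ll 1)$ and $(\ll 4)$; combined with $c\overline{\widehat{C}}b$, axiom $(M\ll 1)$ delivers $a\ll b$.

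The main obstacle is simply bookkeeping: matching each of the roughly twenty-five mereotopological axioms to the right axiom (or short combination) of EDC-lattice and keeping the variable renamings straight. The genuinely conceptual steps are only two, namely the distributive identity underlying $(\leq O\widehat{O})$ and the observation that $(M\ll 1), (M\ll 2)$ together with the trivial facts $0\ll b$ and $a\ll 1$ are exactly what converts $c\overline{C}a, c\overline{\widehat{O}}b$ (resp.\ $c\overline{O}a, c\overline{\widehat{C}}b$) into $a\ll b$. Everything else is a rewriting exercise, and the duality principle from Observation \ref{duality} halves the work.
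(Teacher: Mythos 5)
Your verification is correct, but it takes a genuinely different route from the paper. The paper proves this theorem in one line: since every axiom of a mereotopological structure is a universal first-order sentence true in all contact algebras, the statement is an immediate consequence of Theorem \ref{Universal fragment} (equivalently, of the embedding of every EDC-lattice into a contact algebra, Corollary \ref{embedding in contact algebras}). You instead derive each of the twenty-odd axioms directly and syntactically from the EDC-lattice axioms. What the paper's route buys is brevity and uniformity --- any universal sentence valid in contact algebras comes for free, with no case analysis --- at the cost of invoking the full representation machinery of Section 3. What your route buys is a self-contained, elementary proof that does not depend on the representation theorem at all, together with explicit information about which EDC axiom is responsible for which mereotopological axiom: your matchings are all correct, in particular the use of distributivity for $(\leq O\widehat{O})$, of $(MC1)$ with $(C1)$ for $(\ll CO)$ and dually $(M\widehat{C}1)$ with $(\widehat{C}1)$ for $(\ll \widehat{C}\widehat{O})$, and of $(M\ll 2)$ and $(M\ll 1)$ for $(\ll C\widehat{O})$ and $(\ll \widehat{C} O)$ respectively. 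The only caveat is $(O\widehat{O})$, which as you note amounts to $0\not=1$; this nondegeneracy assumption is implicit in the paper as well (the axiom fails equally in a one-element contact algebra), so it is not a defect of your argument relative to the paper's.
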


\begin{proof} Since all axioms of mereotopological structures are universal first-order sentences true in all contact algebras, then the statement follows from Theorem \ref{Universal fragment}.

\end{proof}

\subsection{RCC-8 spatial relations}

One of the most popular systems of topological relations in the
community of QSRR is RCC-8. The system  RCC-8 was introduced for the first time in \cite{EF}. It consists of 8
relations between non-empty regular closed subsets of arbitrary
topological space. Having in mind the topological representation
of contact algebras, it was given  in \cite{V} an equivalent
definition of RCC-8 in the language of contact algebras:

\begin{definition}\label{RCC-8Def} { The system \bf RCC-8.}
\begin{description}

\item[$\bullet$ disconnected --  DC$(a,b)$:]  $a\overline{C}b$,

\item[$\bullet$ external contact -- EC$(a,b)$:] $aCb$ and
$a\overline{O}b$,

\item[$\bullet$ partial overlap -- PO$(a,b)$:] $aOb$ and
$a\not\leq b$ and ${b\not \leq a}$,

\item[$\bullet$ tangential proper part -- TPP$(a,b)$:] $a\leq b$
and $a\not\ll b$ and ${b\not\leq a}$,

\item[$\bullet$  tangential proper part$^{-1}$ --
TPP$^{-1}(a,b)$:] $b\leq a$ and $b\not\ll a$ and ${a\not\leq b}$,

\item[$\bullet$ nontangential proper part NTPP$(a,b)$:] $a\ll b$
and ${a\not= b}$,

\item[$\bullet$ nontangential proper part$^{-1}$ --
NTPP$^{-1}(a,b)$:] $b\ll a$ and ${a\not=b}$,

\item[$\bullet$ equal -- EQ$(a,b)$:] $a=b$.

\end{description}
\end{definition}

\begin{figure}[h]
\centering
\begin{center}
 \includegraphics{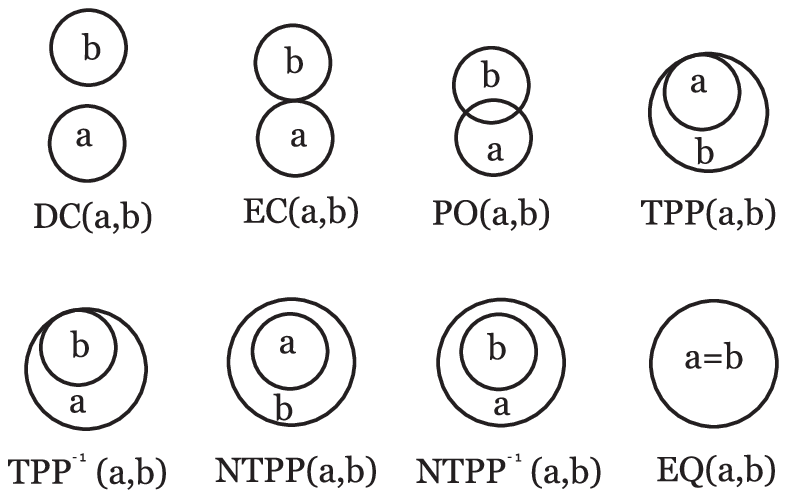}
\end{center}

\begin{center}{\bf RCC-8 relations}
\end{center}

 \end{figure}

Looking at this definition it can be easily seen that the RCC-8 relations are expressible in the language of EDC-lattices. Let us note that RCC-8 relations are not expressible in the language of distributive contact algebras from \cite{dmvw2}.

\section{Additional axioms}\label{additional axioms}
In this Section we will formulate several additional axioms for EDC-lattices which are  adaptations for the language of EDC-lattices of some known axioms considered in the context of contact algebras. First we will formulate some new lattice axioms for EDC-lattices - the so called extensionality axioms for the definable predicates of overlap - $aOb \leftrightarrow_{def} a\cdot b\not=0$ and underlap - $a\widehat{O}b \leftrightarrow_{def} a+b\not= 1$.

\noindent (Ext O)  $a\not\leq b\rightarrow (\exists c)(a\cdot c\not=0$ and $b\cdot c=0)$ - \emph{extensionality of overlap},

\smallskip

\noindent (Ext $\widehat{O}$)  $a\not\leq b\rightarrow (\exists c)(a+c=1$ and $b+c\not=1)$ - \emph{extensionality of underlap}.

\smallskip

We say that a lattice is \emph{O-extensional} if it satisfies (Ext O) and \emph{U-extensional} if it satisfies (Ext $\widehat{O}$). Note that the  conditions (Ext O) and (Ext $\widehat{O}$) are true in Boolean algebras but not always are true in distributive lattices (see \cite{dmvw2} for some examples, references and additional information about these axioms).

\smallskip

We will study also the following extensionality axioms.

\smallskip

\noindent (Ext C) $a\not=1\rightarrow(\exists b\not=0)(a\overline{C}b)$ -\emph{C-extensionality},

\smallskip

\noindent (Ext $\widehat{C}$) $a\not=0\rightarrow(\exists b\not=1)(a\overline{\widehat{C}}b)$ -\emph{$\widehat{C}$-extensionality}.

\smallskip

In contact algebras these two axioms are equivalent. It is proved in \cite{dmvw2} that (Ext $\widehat{O}$) implies that (Ext C) is equivalent to
the following extensionality principle considered by Whitehead \cite{W}

\smallskip

\noindent (EXT C) $a\not\leq b\rightarrow (\exists c)(aCc$ and $b\overline{C}c)$.

\smallskip

Just in a dual way one can show that (Ext O) implies that (Ext $\widehat{C}$) is equivalent to the following condition

\smallskip

\noindent (EXT $\widehat{C}$) $a\not\leq b\rightarrow (\exists c)(b\widehat{C}c$ and $a\overline{\widehat{C}}c)$.

\smallskip

Let us note that (EXT C) and (EXT $\widehat{C}$ ) are equivalent in contact algebras.

\smallskip

\noindent (Con C) $a\not=0, b\not=0$ and $a+b=1\rightarrow aCb$ - \emph{C-connectedness axiom } and

\smallskip

\noindent (Con $\widehat{C}$)  $a\not=1, b\not=1$ and $a\cdot b=0\rightarrow a\widehat{C}b$ - \emph{$\widehat{C}$-connectedness axiom }.

\smallskip

In contact algebras these axioms are equivalent and guarantee topological representation in connected topological spaces.

\smallskip

\noindent (Nor 1)  $a\overline{C}b\rightarrow (\exists c,d)(c+d=1, a\overline{C}c$ and $b\overline{C}d)$,

\smallskip

\noindent (Nor 2) $a\overline{\widehat{C}}b\rightarrow (\exists c,d)(c\cdot b=0, a\overline{\widehat{C}}c$ and $b\overline{\widehat{C}}d)$,

\smallskip

\noindent (Nor 3) $a\ll b\rightarrow (\exists c)(a\ll c\ll b)$.

\smallskip

Let us note that the above three axioms are equivalent in contact algebras and are known by different names.  For instance (Nor 1) comes from the proximity theory \cite{Thron} as \emph{Efremovich axiom}, (Nor 3) sometimes is called \emph{interpolation axiom}. We adopt the name \emph{normality axioms} for (Nor 1), (Nor 2) and (Nor 3) because in topological representations they imply some normality conditions in the corresponding topological spaces. It is proved in \cite{DuV} that (Nor 1) is true in the relational models $(W, R)$ (see Section \ref{relmodels}) if and only if the relation $R$ is transitive and that (Nor 1) implies representation theorem in transitive models. In the next lemma we shall prove similar result using all normality axioms.

\begin{lemma} \label{transitivity lemma} {\bf Transitivity lemma.} Let $\underline{D}=(D,C,\widehat{C},\ll)$ be a EDC-lattice satisfying the axioms (Nor1), (Nor 2) and (Nor 3) and let $(W^{c},R^{c})$ be the canonical structure of $\underline{D}$ (see Section \ref{relrepresentation}) Then:

(i) $R^{c}$ is a transitive relation.

(ii) $\underline{D}$ is representable in EDC-lattice over some system $(W,R)$ with an equivalence relation $R$.

\end{lemma}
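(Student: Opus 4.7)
The plan is to establish (i) by a direct case analysis on the four conjuncts defining $R^{c}$, using the primeness of the ``middle'' filter $\Delta$ to distribute the witnesses supplied by the relevant normality axiom between $\Gamma$--$\Delta$ and $\Delta$--$\Theta$. Part (ii) will then follow immediately: since $R^{c}$ is already known to be reflexive and symmetric, adding transitivity makes it an equivalence, and Lemma \ref{canonical embedding} supplies the Stone map $h$ as the required embedding of $\underline{D}$ into the EDC-lattice over $(W^{c},R^{c})$.

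For (i) I would fix prime filters $\Gamma,\Delta,\Theta$ with $\Gamma R^{c}\Delta$ and $\Delta R^{c}\Theta$ and verify each of the four clauses of $\Gamma R^{c}\Theta$ by contradiction. For the $C$-clause, suppose $a\in\Gamma$, $b\in\Theta$ and $a\overline{C}b$; (Nor 1) yields $c,d$ with $c+d=1$, $a\overline{C}c$, $b\overline{C}d$. Since $1=c+d\in\Delta$ and $\Delta$ is a prime filter, $c\in\Delta$ or $d\in\Delta$: the first possibility contradicts the $C$-clause of $\Gamma R^{c}\Delta$ applied to $a\in\Gamma, c\in\Delta$, the second contradicts the $C$-clause of $\Delta R^{c}\Theta$ applied to $d\in\Delta, b\in\Theta$. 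The $\widehat{C}$-clause is dual: assuming $a\notin\Gamma$, $b\notin\Theta$ and $a\overline{\widehat{C}}b$, (Nor 2) produces $c,d$ with $c\cdot d=0$, $a\overline{\widehat{C}}c$, $b\overline{\widehat{C}}d$; since $0\notin\Delta$ at least one of $c,d$ lies outside $\Delta$, and the corresponding $\widehat{C}$-clause of $\Gamma R^{c}\Delta$ or $\Delta R^{c}\Theta$ gives the contradiction.

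The interesting case uses (Nor 3). Suppose $a\in\Gamma$, $b\notin\Theta$ and $a\ll b$; pick $c$ with $a\ll c\ll b$. If $c\in\Delta$, then $c\in\Delta$, $b\notin\Theta$, $c\ll b$ contradicts the $\ll$-clause of $\Delta R^{c}\Theta$; if $c\notin\Delta$, then $a\in\Gamma$, $c\notin\Delta$, $a\ll c$ contradicts the $\ll$-clause of $\Gamma R^{c}\Delta$. The fourth conjunct (with $a\notin\Gamma$, $b\in\Theta$, $b\ll a$) is handled symmetrically by interpolating $b\ll c\ll a$ and splitting on whether $c\in\Delta$. The main technical subtlety here is to keep track of the orientation of the asymmetric relation $\ll$ so that the interpolant is fed into the correct clause of the canonical definition; once this is straight, each case collapses to a single application of the primeness of $\Delta$ followed by one use of the hypothesis $\Gamma R^{c}\Delta$ or $\Delta R^{c}\Theta$.

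For (ii), combining (i) with the reflexivity and symmetry of $R^{c}$ shown earlier makes $R^{c}$ an equivalence relation on $W^{c}=PF(D)$, so taking $(W,R):=(W^{c},R^{c})$ and applying Lemma \ref{canonical embedding} delivers the desired embedding into the EDC-lattice over a set equipped with an equivalence relation, completing the proof.
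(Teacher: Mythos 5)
Your proof is correct and follows essentially the same route as the paper: a case analysis on the four conjuncts of $R^{c}$, using (Nor 1), (Nor 2) and (Nor 3) together with the primeness of the middle filter $\Delta$ to derive a contradiction in each case, and then deducing (ii) from reflexivity, symmetry and Lemma \ref{canonical embedding}. You even supply the details of the $\widehat{C}$- and reverse-$\ll$-cases that the paper leaves to the reader (correctly reading the witness condition in (Nor 2) as $c\cdot d=0$).
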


\begin{proof} (i)  Let $\Gamma,\Delta$ and $\Theta$ be prime filters in $D$ such that

(1) $\Gamma R^{c}\Delta$ and

(2) $\Delta R^{c}\Theta$

and suppose for the sake of contradiction that

(3) $\Gamma\overline{R}^{c}\Theta$. By the definition of $R^{c}$ we have to consider four cases.

\smallskip

\noindent {\bf Case 1:} $\exists a\in \Gamma, b\in \Theta$ such that $a\overline{C}b$.
\begin{quote}Then by (Nor 1) there exists $c,d$ such that $c+d=1$, $a\overline{C}c$ and $b\overline{C}d$. Since $c+d=1$ then either $c\in \Delta$ or $d\in \Delta$. The case $c\in \Delta$ together with $a\in \Gamma$ imply  by (1) $aCc$ - a contradiction. The case $d\in \Delta$ together with $b\in \Theta$ imply by (2) $bCd$ - again a contradiction.
\end{quote}

\noindent {\bf Case 2:} $\exists a\in \Gamma, b\not\in \Theta$ such that $a\ll b$.

\begin{quote} Then by (Nor 3) $\exists c$ such that $a\ll c$ and $c\ll b$. Consider the case $c\not \in\Delta$. Then $a\in \Gamma$ and (1) imply $a\not \ll c$ a contradiction. Consider now $c\in \Delta$. Then $b\not\in \Theta$ imply $c\not \ll b$ - again a contradiction.

\end{quote}

In a similar way one can obtain a contradiction in the remaining two cases:

\noindent {\bf Case 3:} $\exists a\not\in \Gamma, b\in \Theta$ such that $b\ll a$ and

 \noindent {\bf Case 4:} $\exists a\not\in \Gamma, b\not\in \Theta$ such that $b\overline{\widehat{C}} a$.

(ii) The proof follows from (i) analogous to the proof of Theorem \ref{relational representation}.$\square$
\end{proof}

\medskip

Another kind of axioms which will be used in the topological representation theory in PART II are the so called rich axioms.

\smallskip

\noindent (U-rich $\ll $)  $a\ll b\rightarrow (\exists c)(b+c=1$ and $a\overline{C}c)$,

\smallskip

\noindent (U-rich $\widehat{C}$) $a\overline{\widehat{C}}b\rightarrow (\exists c, d)(a+c=1, b+d=1$ and $c\overline{C}d)$.

\smallskip

\noindent (O-rich $\ll$)  $a\ll b\rightarrow (\exists c)(a\cdot c=0$ and $c\overline{\widehat{C}}b)$,

\smallskip

\noindent (O-rich C)   $a\overline{C}b\rightarrow (\exists c,d)(a\cdot c=0, b\cdot d=0$ and $c\overline{\widehat{C}}d)$.

\smallskip

Let us note that U-rich axioms will be used always with the U-extensionality axiom and that O-rich axioms will be used always with O-extensionality axiom.

The following lemma is obvious.

 \begin{lemma}\label{rich} The axioms (U-rich $\ll $), (U-rich $\widehat{C}$), (O-rich $\ll$) and (O-rich C) are true in all contact algebras.

 \end{lemma}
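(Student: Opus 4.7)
The plan is to verify each of the four axioms directly in an arbitrary contact algebra by exhibiting explicit witnesses built from Boolean complementation. Recall that in a contact algebra the derived relations satisfy $a\widehat{C}b \leftrightarrow a^{*}Cb^{*}$ and $a\ll b \leftrightarrow a\overline{C}b^{*}$; equivalently $c\overline{\widehat{C}}d \leftrightarrow c^{*}\overline{C}d^{*}$. The Boolean identities $x + x^{*}=1$ and $x\cdot x^{*}=0$ will do essentially all the work.

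For (U-rich $\ll$), assume $a\ll b$, i.e.\ $a\overline{C}b^{*}$. I would take $c:=b^{*}$. Then $b+c = b+b^{*}=1$ and the condition $a\overline{C}c$ is exactly $a\overline{C}b^{*}$, which holds by hypothesis. For (U-rich $\widehat{C}$), assume $a\overline{\widehat{C}}b$, which unfolds to $a^{*}\overline{C}b^{*}$. Take $c:=a^{*}$ and $d:=b^{*}$; then $a+c=1$, $b+d=1$, and $c\overline{C}d$ is precisely $a^{*}\overline{C}b^{*}$.

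The two O-rich axioms are handled dually. For (O-rich $\ll$), assume $a\ll b$. Take $c:=a^{*}$; then $a\cdot c=0$, and unfolding $c\overline{\widehat{C}}b$ gives $c^{*}\overline{C}b^{*}$, i.e.\ $a\overline{C}b^{*}$, which is the hypothesis $a\ll b$. For (O-rich C), assume $a\overline{C}b$ and take $c:=a^{*}$, $d:=b^{*}$; then $a\cdot c=0$, $b\cdot d=0$, and $c\overline{\widehat{C}}d$ unfolds to $c^{*}\overline{C}d^{*}$, i.e.\ $a\overline{C}b$.

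There is no real obstacle here: the arguments are pure bookkeeping with the Boolean definitions of $\widehat{C}$ and $\ll$, and in each case the canonical witness is simply the Boolean complement of one of the given regions. The content of the lemma is essentially the observation that, when one does have a Boolean complement available, the rich axioms become tautological consequences of the very definitions of $\widehat{C}$ and $\ll$ from $C$; their non-trivial role will appear only in the distributive (complement-free) setting in Part II.
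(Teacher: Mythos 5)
Your verification is correct: each witness is the Boolean complement of the relevant region, and unfolding the definitions $a\widehat{C}b \leftrightarrow a^{*}Cb^{*}$ and $a\ll b \leftrightarrow a\overline{C}b^{*}$ reduces every clause to the hypothesis or to a Boolean identity. The paper offers no argument at all (it simply declares the lemma obvious), and your computation is exactly the intended one-line check that the authors left implicit.
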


\subsection{Some good embedding properties}\label{embedding properties}

Let $(D_{1}, C_{1}, \widehat{C}_{1}, \ll_{1})$ and $(D_{2}, C_{2}, \widehat{C}_{2}, \ll_{2})$ be two EDC-lattices. We will write $D_1 \preceq D_2$ if $D_{1}$ is a
substructure of $D_{2}$, i.e., $D_1$ is a sublattice of $D_2$, and
the relations $C_{1}, \widehat{C}_{1}, \ll_{1}$ are restrictions of the relations $C_{2}, \widehat{C}_{2}, \ll_{2}$ on $D_{1}$.  Since we want to prove embedding theorems, it is valuable to know under what
conditions  we have
equivalences of the form:
\begin{center}
$D_{1}$ satisfies some additional axiom iff  $D_{2}$ satisfies the same axiom.
\end{center}

\begin{definition}\label{dense and dual dense} {\bf Dense and dual dense sublattice.} Let $D_{1}$ be a  distributive sublattice of $D_{2}$. $D_{1}$ is
called a \emph{ dense} sublattice of $D_{2}$ if the following
condition is satisfied:

\smallskip

\noindent(Dense) \quad $(\forall a_{2}\in D_{2})(a_{2}\not=0
\Rightarrow(\exists a_{1}\in D_{1})( a_{1}\leq a_{2}$ and
$a_{1}\not=0))$.

\smallskip

If $h$ is an embedding of the lattice $D_{1}$ into the lattice
$D_{2}$ then we say that $h$ is a \emph{ dense} embedding if the
sublattice $h(D_{1})$ is a  dense sublattice of $D_{2}$.

\smallskip

Dually, $D_{1}$ is
called a \emph{dual dense} sublattice of $D_{2}$ if the following
condition is satisfied:

\smallskip

\noindent(Dual dense) \quad $(\forall a_{2}\in D_{2})(a_{2}\not=1
\Rightarrow(\exists a_{1}\in D_{1})( a_{2}\leq a_{1}$ and
$a_{1}\not=1))$.

\smallskip

If $h$ is an embedding of the lattice $D_{1}$ into the lattice
$D_{2}$ then we say that $h$ is a \emph{Dual dense} embedding if the
sublattice $h(D_{1})$ is a dually dense sublattice of $D_{2}$.

\end{definition}

Note that in Boolean
algebras, dense and dually dense conditions are equivalent;
in distributive lattices this equivalence does not hold (see \cite{dmvw2} for some known characterizations of density and dual density in distributive lattices).

\smallskip

For the case of contact algebras \cite{V} and distributive contact lattices \cite{dmvw2} we introduced the notion of \emph{C-separability} as follows. Let $D_{1} \preceq D_{2}$; we say that $D_{1}$ is a
\emph{$C$-separable sublattice of $D_{2}$} if
the following condition is satisfied:

\smallskip

\noindent(C-separable) $(\forall a_{2},b_{2}\in
D_{2})(a_{2}\overline{C}b_{2} \Rightarrow (\exists a_{1},b_{1}\in
D_{1})(a_{2}\leq a_{1}, \/ b_{2}\leq b_{1},
\/a_{1}\overline{C}b_{1}))$.

For the case of EDC-lattices we modified this notion adding two additional clauses corresponding to  the relations $\widehat{C}$ and $\ll$ just having in mind the definitions of these relations in contact algebras. Namely

\begin{definition}\label{C-separability1} {\bf C-separability.} Let $D_{1} \preceq D_{2}$; we say that $D_{1}$ is a
\emph{$C$-separable EDC-sublattice of $D_{2}$} if
the following conditions are satisfied:

\smallskip

\noindent(C-separability for C) -

$(\forall a_{2},b_{2}\in
D_{2})(a_{2}\overline{C}b_{2} \Rightarrow (\exists a_{1},b_{1}\in
D_{1})(a_{2}\leq a_{1}, \/ b_{2}\leq b_{1},
\/a_{1}\overline{C}b_{1}))$.

\smallskip

\noindent(C-separability for $\widehat{C}$) -

$(\forall a_{2},b_{2}\in
D_{2})(a_{2}\overline{\widehat{C}}b_{2} \Rightarrow (\exists a_{1},b_{1}\in
D_{1})(a_{2}+ a_{1}=1, \/ b_{2}+b_{1}=1,
\/a_{1}\overline{C}b_{1}))$.

\smallskip

\noindent(C-separability for $\ll$) -

$(\forall a_{2},b_{2}\in
D_{2})(a_{2}\ll b_{2} \Rightarrow (\exists a_{1},b_{1}\in
D_{1})(a_{2}\leq a_{1}, \/ b_{2}+ b_{1}=1,
\/a_{1}\overline{C}b_{1}))$.

\smallskip

If $h$ is an embedding of the lattice $D_{1}$ into the lattice
$D_{2}$ then we say that $h$ is a \emph{C-separable embedding} if the
sublattice $h(D_{1})$ is a C-separable sublattice of $D_{2}$.

\end{definition}

The notion of a $C$-separable embedding $h$ is defined similarly. The following lemma
is analogous  to a similar result from \cite{V} (Theorem 2.2.2) and from  \cite{dmvw2} (Lemma 5).

\begin{lemma}\label{dense}
Let $D_{1}, D_{2}$ be  EDC-lattices  and $D_{1}$ be a
$C$-separable EDC-sublattice of $D_{2}$. Then:

(i) If $D_{1}$ is a dually dense EDC-sublattice of $D_{2}$, then $D_{1}$
satisfies the axiom (Ext C) iff  $D_{2}$ satisfies the axiom
(Ext C),

(ii) $D_{1}$ satisfies the axiom (Con C) iff  $D_{2}$ satisfies the
axiom (Con C),

(iii) $D_{1}$ satisfies the axiom (Nor 1) iff  $D_{2}$ satisfies the
axiom (Nor 1),

(iv) $D_{1}$ satisfies the axiom (U-rich $\ll$) iff  $D_{2}$ satisfies the
axiom (U-rich $\ll$),

(v) $D_{1}$ satisfies the axiom (U-rich $\widehat{C}$) iff  $D_{2}$ satisfies the
axiom (U-rich $\widehat{C}$).
\end{lemma}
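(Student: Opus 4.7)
The overall plan is a bidirectional transfer for each of the five items. For the direction $D_1 \models$ (Ax) $\Rightarrow D_2 \models$ (Ax), I will start with arbitrary $D_2$-witnesses, invoke the appropriate clause of C-separability to produce dominating/surrounding $D_1$-witnesses, apply the axiom inside $D_1$, and push the conclusion back down to $D_2$ using the monotonicity axioms (C2) and ($\widehat{C}$2) and simple lattice arithmetic. For the converse direction $D_2 \models$ (Ax) $\Rightarrow D_1 \models$ (Ax), I start with $D_1$-elements, apply the axiom inside $D_2$ to get $D_2$-witnesses, and once again invoke C-separability to replace those witnesses by $D_1$-witnesses with inequalities preserving the conclusion.

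For item (i), dual density plays its expected role. Given $a_2 \neq 1$ in $D_2$, dual density supplies $a_1 \in D_1$ with $a_2 \leq a_1$ and $a_1 \neq 1$; then (Ext C) in $D_1$ yields $b_1 \neq 0$ with $a_1 \overline{C} b_1$, and (C2) gives $a_2 \overline{C} b_1$. Conversely, given $a_1 \neq 1$ in $D_1$, (Ext C) in $D_2$ produces some $b_2 \neq 0$ with $a_1 \overline{C} b_2$; C-separability for $C$ then gives $a_1', b_1 \in D_1$ with $a_1 \leq a_1'$, $b_2 \leq b_1$, and $a_1' \overline{C} b_1$, whence $a_1 \overline{C} b_1$ by (C2) and $b_1 \neq 0$ since $b_1 \geq b_2 \neq 0$. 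Dual density is needed only here because (Ext C) has an order-sensitive hypothesis $a \neq 1$, which does not automatically transfer up from $D_2$ to a useful $D_1$-element without it.

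Items (ii)--(v) follow the same recipe with the separability clause chosen to match the relation appearing negated in the hypothesis of the axiom. For (Con C) and (Nor 1) I use C-separability for $C$: the key combinatorial step is that whenever $a_2 \leq a_1$, $b_2 \leq b_1$ and $a_2 + b_2 = 1$, one gets $a_1 + b_1 = 1$, which lets the $D_1$-version of the axiom fire; the ``$D_1$-supplied'' witnesses $c_1, d_1$ in (Nor 1) similarly satisfy $c_1 + d_1 \geq c_2 + d_2 = 1$ on the return trip. For (U-rich $\ll$) item (iv), C-separability for $\ll$ already states essentially the rich axiom itself on mixed $D_1$/$D_2$-arguments, so the $\Rightarrow$-direction is immediate (take $c = b_1$), and the $\Leftarrow$-direction uses C-separability for $C$ to push a $D_2$-witness $c_2$ to a $D_1$-witness $c_1 \geq c_2$, preserving both $b_1 + c_1 = 1$ and $a_1 \overline{C} c_1$. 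Item (v) is handled identically using C-separability for $\widehat{C}$ on one side and for $C$ on the other.

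The proof contains no real obstacle beyond careful bookkeeping: for each axiom I must select the correct separability clause, verify that the inequalities it delivers are compatible with the $+$/$\cdot$ and $1$/$0$ constraints appearing in the axiom, and apply (C2) (or its dual) to collapse the intermediate $D_1$-elements back onto the original $D_2$-arguments. The argument is a direct adaptation of Theorem 2.2.2 of \cite{V} and Lemma 5 of \cite{dmvw2}, with the single new ingredient being that the two new separability clauses (for $\widehat{C}$ and for $\ll$) are tailored precisely so that this transfer mechanism still works for axioms mentioning $\widehat{C}$ and $\ll$.
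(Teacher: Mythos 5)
Your proof is correct and takes essentially the same route as the paper: the paper delegates (i)--(iii) to Theorem 2.2.2 of \cite{V} and writes out only (iv) (declaring (v) similar), and its argument for (iv) is exactly your two-step transfer --- C-separability for $\ll$ yields the $\Rightarrow$ direction immediately with $c=b_1$, while the $\Leftarrow$ direction applies the axiom in $D_2$ and then uses C-separability for $C$ together with (C2) and $1=b_1+c_2\leq b_1+b_1'$ to pull the witness back into $D_1$. Your fuller treatment of (i)--(iii), including the observation that dual density is needed only for the upward direction of (Ext C), matches the standard argument the paper cites.
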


\begin{proof} Conditions (i), (ii) and (iii) have the same proof as in Theorem 2.2.2 from  \cite{V}.

(iv) ($\Rightarrow$) Suppose that $D_{1}$ satisfies the axiom (U-rich $\ll$), $a_{2},b_{2}\in D_{2}$ and let $a_{2}\ll b_{2}$. Then by (C-separability for $\ll$) we obtain: $(\exists a_{1},b_{1}\in D_{1})(a_{2}\leq a_{1}, b_{2}+b_{1}=1, a_{1}\overline{C}b_{1})$. Since $D_{1}$ is a sublattice of $D_{2}$ then $a_{1},b_{1}\in D_{2}$. From $a_{2}\leq a_{1}$ and $a_{1}\overline{C}b_{1}$ we get $a_{2}\overline{C}b_{1}$. Thus we have just proved: $(a_{2}\ll b_{2}\rightarrow (\exists b_{1}\in D_{2})(b_{2}+b_{1}=1$ and $ a_{2}\overline{C}b_{1})$ which shows that $D_{2}$ satisfies (U-rich $\ll$).

($\Leftarrow$) Suppose that $D_{2}$ satisfies the axiom (U-rich $\ll$),  $a_{1},b_{1}\in D_{1}$ (hence $a_{1},b_{1}\in D_{2}$) and let $a_{1}\ll b_{1}$. Then by (U-rich $\ll$) for $D_{2}$  we get: $(\exists c_{2}\in D_{2})(b_{1}+c_{2}=1, a_{1}\overline{C}c_{2})$. Since $a_{1},c_{2}\in D_{2}$ and $a_{1}\overline{C}c_{2}$, then by (C-separability for C) we get: $(\exists a_{1}',b_{1}'\in D_{1})(a_{1}\leq a_{1}', c_{2}\leq b_{1}', a_{1}'\overline{C}b_{1}')$. Combining the above results we get: $1=b_{1}+c_{2}\leq b_{1}+b_{1}'$ and $a_{1}\overline{C}b_{1}'$. We have just proved the following: $a_{1}\ll b_{1}\rightarrow (\exists b_{1}'\in D_{1})(b_{1}+b_{1}'=1,a_{1}\overline{C}b_{1}')$ which shows that $D_{1}$ satisfies (U-rich $\ll$).

(v) The proof is similar to that of (iv). $\square$.
\end{proof}

The notion of $\widehat{C}$-separable sublattice can be defined in a dual way as follows:

\begin{definition}\label{dualCseparable} Suppose that $D_{1} \preceq D_{2}$; we say that $D_{1}$ is a
\emph{$\widehat{C}$-separable EDC-sublattice of $D_{2}$} if
the following condition is satisfied:

\smallskip

\noindent($\widehat{C}$-separability for C) -

$(\forall a_{2},b_{2}\in
D_{2})(a_{2}Cb_{2} \Rightarrow (\exists a_{1},b_{1}\in
D_{1})(a_{1}+a_{2}=1, \/ b_{1}+ b_{2}=1,
\/a_{1}\overline{\widehat{C}}b_{1}))$,

\smallskip

\noindent($\widehat{C}$-separability for $\widehat{C}$) -

$(\forall a_{2},b_{2}\in
D_{2})(a_{2}\widehat{C}b_{2} \Rightarrow (\exists a_{1},b_{1}\in
D_{1})(a_{1}\leq
a_{2}, \/ b_{1}\leq b_{2},
\/a_{1}\overline{\widehat{C}}b_{1}))$,

\smallskip

\noindent($\widehat{C}$-separability for $\ll$) -

$(\forall a_{2},b_{2}\in
D_{2})(a_{2}\ll b_{2} \Rightarrow (\exists a_{1},b_{1}\in
D_{1})(a_{1}+a_{2}=1, \/ b_{1}\leq b_{2},
\/a_{1}\overline{\widehat{C}}b_{1}))$.

\smallskip

The notion of a $\widehat{C}$-separable embedding $h$ is defined as in definition \ref{C-separability1}.

\end{definition}

The following lemma is dual to Lemma \ref{dense} and  can be proved in a dual way.

\begin{lemma}\label{dual dense}
Let $D_{1}, D_{2}$ be  EDC-lattices and $D_{1}$ be a
$\widehat{C}$-separable EDC-sublattice of $D_{2}$; then:

\smallskip

(i) If $D_{1}$ is a  dense EDC-sublattice of $D_{2}$, then $D_{1}$
satisfies the axiom (Ext $\widehat{C}$) iff  $D_{2}$ satisfies the axiom
(Ext $\widehat{C}$),

(ii) $D_{1}$ satisfies the axiom (Con $\widehat{C}$) iff  $D_{2}$ satisfies the
axiom (Con $\widehat{C}$),

(iii) $D_{1}$ satisfies the axiom (Nor 2) iff  $D_{2}$ satisfies the
axiom (Nor 2).

(iv) $D_{1}$ satisfies the axiom (O-rich $\ll$) iff  $D_{2}$ satisfies the
axiom (O-rich $\ll$).

(v) $D_{1}$ satisfies the axiom (O-rich $\widehat{C}$) iff  $D_{2}$ satisfies the
axiom (O-rich $\widehat{C}$).

\end{lemma}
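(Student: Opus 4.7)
The plan is to derive Lemma~\ref{dual dense} from Lemma~\ref{dense} by invoking the duality principle of Observation~\ref{duality}. The first step is to check that every ingredient of the statement is the syntactic dual of its counterpart in Lemma~\ref{dense}: Definition~\ref{dense and dual dense} already records that (Dense) and (Dual dense) form a dual pair; comparing Definitions~\ref{C-separability1} and~\ref{dualCseparable} shows that each clause of $\widehat{C}$-separability is obtained from the corresponding clause of $C$-separability by swapping $\leq$ with $\geq$, the constraint $x+y=1$ with $x\cdot y=0$, and $C$ with $\widehat{C}$; and the five relevant axioms line up as (Ext $\widehat{C}$)$\leftrightarrow$(Ext C), (Con $\widehat{C}$)$\leftrightarrow$(Con C), (Nor~2)$\leftrightarrow$(Nor~1), (O-rich $\ll$)$\leftrightarrow$(U-rich $\ll$), and (O-rich $\widehat{C}$)$\leftrightarrow$(U-rich $\widehat{C}$).

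Next I would invoke Observation~\ref{duality}, which guarantees that the axiom list of an EDC-lattice is closed under duality. Hence for every EDC-lattice $\underline{D}=(D,C,\widehat{C},\ll)$ the opposite structure $\underline{D}^{\mathrm{op}}=(D,\geq,\widehat{C},C,\gg)$ is again an EDC-lattice, and any inclusion $D_1\preceq D_2$ transports to $D_1^{\mathrm{op}}\preceq D_2^{\mathrm{op}}$. Under this translation a $\widehat{C}$-separable inclusion becomes a $C$-separable inclusion, a dense sublattice becomes a dually dense sublattice, and each of (Ext $\widehat{C}$), (Con $\widehat{C}$), (Nor~2), (O-rich $\ll$), (O-rich $\widehat{C}$) becomes its Lemma~\ref{dense} counterpart. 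Applying Lemma~\ref{dense} to the pair $(D_1^{\mathrm{op}},D_2^{\mathrm{op}})$ and reading the equivalences back through the duality delivers exactly items (i)--(v) of Lemma~\ref{dual dense}.

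For a self-contained alternative one can simply transpose the proof of Lemma~\ref{dense} clause by clause; for instance in (iv) the forward direction takes $a_2\ll b_2$ in $D_2$, invokes $\widehat{C}$-separability for $\ll$ to obtain witnesses $a_1,b_1\in D_1$, and transports $a_1\,\overline{\widehat{C}}\,b_1$ using axiom $(\widehat{C}2)$ in place of $(C2)$, while the reverse direction combines (O-rich $\ll$) in $D_2$ with $\widehat{C}$-separability for $\widehat{C}$ to pull the witness back into $D_1$ and then reassembles everything via the lattice operations. The one real pitfall, and the main obstacle of the proof, is that the monotonicity of $\widehat{C}$ runs against that of $C$, so the sign conventions $a\leq a_1$ versus $a_1\leq a$, and $b+b_1=1$ versus $b\cdot b_1=0$ built into Definitions~\ref{C-separability1} and~\ref{dualCseparable}, must be threaded through each dualization step consistently; once that bookkeeping is discharged the arguments are syntactic mirrors of those for Lemma~\ref{dense}.
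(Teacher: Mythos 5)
Your proposal matches the paper's approach exactly: the paper simply states that Lemma~\ref{dual dense} ``is dual to Lemma~\ref{dense} and can be proved in a dual way,'' which is precisely the duality argument you spell out (and your clause-by-clause transposition is just the expanded form of the same idea). Your version is correct and, if anything, more explicit than the paper about the bookkeeping of the dual pairs.
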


\begin{corollary}\label{limitation} Let $\underline{D}=(D,C,\widehat{C},\ll)$ be an EDC-lattice and $\underline{B}=(B,C)$ be a contact algebra. Then:

(i) If $h$ is a C-separable embedding of $\underline{D}$ into $\underline{B}$ then $\underline{D}$ must satisfy the axioms (U-rich $\ll$) and (U-rich $\widehat{C}$).

(ii) If $h$ is a $\widehat{C}$-separable embedding of $\underline{D}$ into $\underline{B}$ then $\underline{D}$ must satisfy the axioms (O-rich $\ll$) and (O-rich $\widehat{C}$).

\end{corollary}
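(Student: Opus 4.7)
The plan is to reduce the corollary to a direct application of Lemma \ref{rich} together with Lemmas \ref{dense} and \ref{dual dense}. The key observation is that any contact algebra $\underline{B}$ is, in particular, an EDC-lattice (since the axioms for $C$, $\widehat{C}$, $\ll$ hold in contact algebras under the standard definitions, as already noted in the excerpt). Thus $h$ being an embedding of EDC-lattices allows us to identify $\underline{D}$ with $h(\underline{D}) \preceq \underline{B}$ and apply the transfer lemmas.

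For part (i), the argument proceeds as follows. By Lemma \ref{rich}, the axioms (U-rich $\ll$) and (U-rich $\widehat{C}$) hold in every contact algebra, hence in $\underline{B}$. Since $h$ is a C-separable embedding, the image $h(\underline{D})$ is a C-separable EDC-sublattice of $\underline{B}$, so Lemma \ref{dense}(iv) and (v) — applied in the ``$D_2$ satisfies $\Rightarrow D_1$ satisfies'' direction of the biconditionals — yield that $h(\underline{D})$ satisfies (U-rich $\ll$) and (U-rich $\widehat{C}$). Since $h$ is an isomorphism from $\underline{D}$ onto $h(\underline{D})$, these universal first-order axioms transfer back to $\underline{D}$.

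Part (ii) is handled by the dual argument, replacing Lemma \ref{dense} with Lemma \ref{dual dense}(iv) and (v) and ``C-separable'' with ``$\widehat{C}$-separable''. Again Lemma \ref{rich} supplies validity of (O-rich $\ll$) and (O-rich $\widehat{C}$) in the ambient contact algebra, and the $\widehat{C}$-separability of the embedding allows the axioms to be pulled back to $\underline{D}$.

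No step here poses any real difficulty — the corollary is essentially a bookkeeping consequence of the preceding lemmas. The only point worth being careful about is that one uses the non-trivial ($\Leftarrow$) direction of the biconditionals in Lemmas \ref{dense} and \ref{dual dense}, which is precisely what makes C-separability (respectively $\widehat{C}$-separability) an intrinsic restriction on $\underline{D}$ when an embedding into a contact algebra with that separation property is sought. This is the content of the corollary: it identifies a necessary condition that any would-be C-separable (resp.\ $\widehat{C}$-separable) embedding imposes on the source EDC-lattice, motivating the inclusion of the rich axioms among the hypotheses of the topological representation theorems of Part II.
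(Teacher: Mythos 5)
Your proof is correct and follows exactly the paper's own argument: Lemma \ref{rich} gives that the ambient contact algebra satisfies the rich axioms, and the ($\Leftarrow$) direction of Lemma \ref{dense}(iv),(v) (resp.\ Lemma \ref{dual dense}) pulls them back along the C-separable (resp.\ $\widehat{C}$-separable) embedding. The extra care you take in identifying $\underline{D}$ with its image $h(\underline{D})\preceq\underline{B}$ is a detail the paper leaves implicit, but the substance is the same.
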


\begin{proof} (i)  Note that by Lemma \ref{rich} $\underline{B}$ satisfies the axioms (U-rich $\ll$) and (U-rich $\widehat{C}$). Then by Lemma \ref{dense} (iv) and (v) $\underline{D}$ satisfies the axioms (U-rich $\ll$) and (U-rich $\widehat{C}$).

(ii) Similarly to (i) the proof follows from Lemma \ref{rich} and Lemma \ref{dual dense}. $\square$

\end{proof}

\bigskip

\bigskip

\bigskip

\noindent {\bf PART II: TOPOLOGICAL REPRESENTATIONS OF EXTENDED DISTRIBUTIVE CONTACT LATTICES  }

\bigskip

\bigskip

The aim of this second part of the paper is to investigate several kinds of topological representations of EDC-lattices. We concentrate our attention mainly on topological representations with some  "good properties" in the sense of Section \ref{embedding properties}: dual density and C-separability, and their dual versions - density and $\widehat{C}$-separability.

\section{Topological models of EDC-lattices} We assume some familiarity of the reader with the basic theory of topological spaces:(see \cite{E}). First we recall some notions from topology. By a
topological space  we mean a set $X$ provided with a
family $C(X)$ of subsets, called closed sets, which contains the
empty set $\varnothing$,  the whole set $X$, and is closed with
respect to finite unions and arbitrary intersections. Fixing $C(X)$ we say that
$X$ is endowed with a topology. A subset $a\subseteq X$ is called
{\em open} if it is the complement of a closed set. A family of closed sets $\mathbf{CB}(X)$ is called
a {\em closed basis} of the topology if every closed set can be
represented as an intersection of sets from $\mathbf{CB}(X)$.
 In a similar way the topology of $X$ can be characterized by the family $O(X)$ of open sets: it contains the empty set, $X$ and is closed under finite intersections and arbitrary unions. A family $\mathbf{OB}(X)$  of open sets is called an \emph{open basis} of the topology if every open set can be represented as an union of sets from $\mathbf{OB}(X)$. $X$ is called \emph{semiregular space} if it has a closed  base of regular closed sets or an open base of regular open sets.

We remaind the definitions of two important topological operations on sets - \emph{closure operation} $Cl$, and \emph{interior operation} $Int$. Namely $Cl(a)$ is the intersection of all closed sets of $X$ containing $a$ and $Int(a)$ is the union of all open sets included in $a$. Note that the operations $Cl$ and $Int$ are interdefinable: $Cl(a)=-Int(-a)$ and $Int(a)=-Cl(-a)$.  Using the bases $\mathbf{CB}(X)$  and $\mathbf{OB}(X)$ the definitions of closure and interior operations have the following useful expressions:

\smallskip

$x\in Cl(a)$ iff $(\forall b\in \mathbf{CB}(X))( a\subseteq b\rightarrow x\in b)$,

\smallskip

$x\in Int(a)$ iff $(\exists b\in \mathbf{OB}(X))(b\subseteq a$ and $x\in b)$.

\smallskip

 We say that $a$ is a \emph{regular closed set} if $a=Cl(Int(a))$ and $a$ is a \emph{regular open} set if $a=Int(Cl(a))$. It is a well known fact that the set $RC(X)$ of all regular closed subsets of $X$ is a Boolean algebra with respect to the relations, operations and constants defined as follows: $a\leq b$ iff $a\subseteq b$, $0=\varnothing$, $1=X$, $a+b=a\cup b$, $a\cdot b= Cl(Int(a\cap b)$, $a^{*}=Cl(-a)$ where $-a=X\smallsetminus a$. If we define a contact $C$ by $aCb$ iff $a\cap b\not= \varnothing$ then we obtain the standard topological model of contact algebra.

 Another topological model of contact algebra is by the set $RO(X)$  of regular open subsets of $X$. The relevant definitions are as follows: $a\leq b$ iff $a\subseteq b$, $0=\varnothing$, $1=X$, $a\cdot b= a\cap b$, $a+b= Int(Cl(a\cup b)$, $a^{*}=Int-a$. The contact relation is $aCb$ iff $Cl(a)\cap Cl(b)\not=\varnothing$.

 Note that these two models are isomorphic.

  {\bf Topological model of EDC-lattice by regular-closed sets.} Consider the contact algebra  $RC(X)$ of regular closed subsets of $X$. Let us   remove the operation $a^{*}$ and define the relations $\widehat{C}$ and $\ll$ topologically  according to their definitions in contact algebra as follows:

 \smallskip

$a\widehat{C}b$ iff $ Cl(-a)\cap Cl(-b)\not=\varnothing $ iff (equivalently) $Int(a)\cup Int(b)\not= X$.

\smallskip

 $a\ll b$ iff $a\cap Cl(-b)=\varnothing$ iff (equivalently) $a\subseteq Int(b)$.

 \smallskip

 Obviously the obtained  structure is a model of EDC-lattice. Also any distributive sublattice of $RC(X)$ with the same definitions of the relations $C$, $\widehat{C}$ and $ \ll$ is  a model of EDC-lattice. These models   are considered as  \emph{standard topological models of EDC-lattice by regular closed sets.}

 \smallskip

 {\bf Topological model of EDC-lattice by regular-open sets.} Consider the contact algebra  $RO(X)$ of regular open subsets of $X$. Let us   remove the operation $a^{*}$ from the contact algebra $RO(X)$   and define the relations $\widehat{C}$ and $\ll$ topologically  according to their definitions in the contact algebra as follows:

 \smallskip

  $a\widehat{C}b$ iff $Cl(Int(-a)\cap Cl(Int(-b))\not=\varnothing$ iff (equivalently) $a\cup b\not=X$,

  \smallskip

  $a\ll b$ iff $Cl(a)\cap Cl(Int(-b))=\varnothing$ iff (equivalently) $Cl(a)\subseteq b$.

 Obviously the obtained  structure is another  standard topological model of EDC-lattice and any distributive sublattice of $RO(X)$ with the same relations $C$, $\widehat{C}$ and $ \ll$ is also a model of EDC-lattice.

 The main aim of PART II of the paper is the topological representation theory of EDC-lattices related to the above two standard models. The first simple result is the following representation theorem.

 \begin{theorem}\label{topological representation1}{\bf Topological representation theorem for EDC-lattices.} Let $\underline{D}=(D,C,\widehat{C},\ll)$ be an EDC-lattice. Then:

  (i) There exists a topological space $X$ and an embedding  of $\underline{D}$ into the contact algebra $RC(X)$ of regular closed subsets of $X$.

(ii) There exists a topological space $Y$ and an embedding  of $\underline{D}$ into the contact algebra $RO(Y)$ of regular open subsets of $Y$.
\end{theorem}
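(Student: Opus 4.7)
The plan is a short composition argument. By Corollary \ref{embedding in contact algebras}, every EDC-lattice $\underline{D}$ admits an isomorphic embedding $h_{1}: \underline{D} \to \underline{B}$ into some Boolean contact algebra $\underline{B} = (B, C)$; this embedding already preserves the three primitive relations $C, \widehat{C}, \ll$. On the other hand, the classical topological representation theorem for contact algebras (see \cite{dv, DuV}) supplies, for any contact algebra $\underline{B}$, a topological space $X$ together with a Boolean embedding $h_{2}: \underline{B} \to RC(X)$ that preserves the contact relation. The composition $h = h_{2} \circ h_{1}$ is the candidate embedding for part (i).

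To verify that $h$ is an EDC-lattice embedding it suffices to check that $\widehat{C}$ and $\ll$ are preserved and reflected. Since $h_{2}$ is a contact algebra embedding, it preserves the Boolean complement and $C$, hence also the Boolean-definable relations $a\widehat{C}b \leftrightarrow a^{*}Cb^{*}$ and $a\ll b \leftrightarrow a\overline{C}b^{*}$ of $\underline{B}$. The topological definitions of $\widehat{C}$ and $\ll$ on $RC(X)$ recalled just before the theorem ($a\widehat{C}b$ iff $Int(a)\cup Int(b)\not= X$, and $a\ll b$ iff $a\subseteq Int(b)$) coincide with the Boolean definitions inside $RC(X)$ via $a^{*} = Cl(X\setminus a)$, so the two interpretations agree on $RC(X)$. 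Combining this with the fact that $h_{1}$ preserves all three relations by Corollary \ref{embedding in contact algebras}, the composition $h$ preserves them as well, proving (i).

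For (ii), the plan is to invoke the classical isomorphism of contact algebras $RC(X) \cong RO(X)$ given by $a \mapsto Int(a)$ with inverse $u \mapsto Cl(u)$. This bijection respects the Boolean structure and $C$, therefore it also respects $\widehat{C}$ and $\ll$, and under it the standard topological interpretations of the three relations on $RC(X)$ are translated into the standard topological interpretations on $RO(X)$. Taking $Y = X$ and composing the embedding $h$ from part (i) with this isomorphism yields the required embedding into $RO(Y)$. I do not anticipate any genuine obstacle here: the substantive mereotopological work was carried out in Part I, and the only thing to be careful about is matching the topological with the Boolean-algebraic interpretations of $\widehat{C}$ and $\ll$, which is immediate from the formulas displayed just before the theorem.
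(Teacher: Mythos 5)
Your proposal is correct and matches the paper's own argument: the paper likewise obtains the result by composing Corollary \ref{embedding in contact algebras} with the classical representation of contact algebras by regular closed (dually, regular open) sets from \cite{dv}. You merely spell out the (correct and implicit) check that $\widehat{C}$ and $\ll$, being Boolean-definable from $C$, are automatically preserved by a contact-algebra embedding and agree with their topological interpretations on $RC(X)$ and $RO(Y)$.
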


\begin{proof} It is shown in \cite{dv} that every contact algebra is isomorphic to a subalgebra of the contact algebra $RC(X)$ of regular closed subsets of some topological space $X$,  and dually, that it is also isomorphic to a subalgebra of the contact algebra $RO(Y)$ of the regular open subsets of some topological space $Y$. Then the proof follows directly from this result and the Corollary \ref{embedding in contact algebras}.$\square$

\end{proof}

The above theorem is not the best one, because it can not be extended straightforwardly to EDC-lattices satisfying some of the additional axioms mentioned in Section \ref{additional axioms}. That is why we will study in the next sections representation theorems based on embeddings satisfying some of the good conditions described in Section \ref{embedding properties}.   Before going on let us remaind some other topological facts, which will be used later on.

 A topological space $X$ is called:

 $\bullet$  {\em normal} if every pair of closed disjoint sets can be
separated by a pair of open sets;

$\bullet$ {\em $\kappa$-normal}  \cite{Shchepin} if
every pair of regular closed disjoint sets can be separated by a
pair of open sets;

$\bullet$  {\em weakly regular}  \cite{dw} if it is
semiregular and for each nonempty open set $a$ there exits a
nonempty open set $b$ such that $Cl(a)\subseteq b$;

$\bullet$ {\em connected} if it can not be represented by a
sum of two disjoint nonempty open sets;

$\bullet$ {\em $T_0$} if for every pair of distinct points there
is an open set containing one of them and not containing the
other; $X$ is called {\em $T_1$} if every one-point set is a
closed set, and $X$ is called {\em Hausdorff } (or {\em $T_2$}) if
each pair of distinct points can be separated by a pair of
disjoint open sets.

$\bullet$ {\em compact} if it satisfies the
following condition: let $\{A_{i}: i\in I\}$ be a non-empty family
of closed sets of $X$ such that for every finite subset
$J\subseteq I$ the intersection $\bigcap\{A_{i}: i\in
J\}\not=\varnothing$, then $\bigcap\{A_{i}: i\in
I\}\not=\varnothing$.

The following lemma relates  topological properties to the properties of the relations $C$, $\widehat{C}$ and $\ll$ and shows the importance of the additional axioms for EDC-lattices.

\begin{lemma}\label{topoequivalents}

(i)  If $X$ is semiregular, then $X$ is weakly regular iff
$RC(X)$ satisfies any of the axioms (Ext C), (Ext $\widehat{C}$).

(ii)  $X$ is $\kappa$-normal iff  $RC(X)$ satisfies any of the axioms
(Nor 1), (Nor 2) and (Nor 3).

(iii)  $X$ is connected iff  $RC(X)$ satisfies any of the  axioms (Con C), (Con $\widehat{C}$).

(iv)  If $X$ is compact and Hausdorff, then  $RC(X)$ satisfies
(Ext C), (Ext $\widehat{C}$) and (Nor 1), (Nor 2) and (Nor 3) .

\end{lemma}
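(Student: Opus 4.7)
The plan is to translate each axiom into its topological content in $RC(X)$, using $aCb \Leftrightarrow a\cap b\ne\varnothing$, $a\ll b \Leftrightarrow a\subseteq Int(b)$, $a\widehat{C}b \Leftrightarrow Int(a)\cup Int(b)\ne X$, and the Boolean complement $a^{*}=Cl(-a)$. A useful simplification is that (Ext C) and (Ext $\widehat{C}$) are already equivalent in any contact algebra via $a\mapsto a^{*}$, and (Nor 1)--(Nor 3) are pairwise equivalent in contact algebras (noted in Section \ref{additional axioms}), so in parts (i) and (ii) it suffices to establish the topological equivalence with one representative, namely (Ext C) and (Nor 1) respectively.

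For (i), assuming $X$ semiregular I would argue that weak regularity (read in the standard way: for each nonempty open $a$ there is a nonempty open $b$ with $Cl(b)\subseteq a$) is exactly (Ext C) for $RC(X)$. From left to right: given $a\in RC(X)$ with $a\ne X$, apply weak regularity to the nonempty open set $-a$ to obtain a nonempty open $v$ with $Cl(v)\subseteq -a$; shrinking $v$ to a nonempty regular open subset by semiregularity makes $Cl(v)$ a nonempty regular closed witness disjoint from $a$. Conversely, given a nonempty open $W$, pick a nonempty regular open $w\subseteq W$ by semiregularity; if $Cl(w)\ne X$, apply (Ext C) to $Cl(w)$ to obtain a nonempty regular closed $e$ disjoint from $Cl(w)$, whose interior supplies a nonempty open set with closure inside $W$. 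The (Ext $\widehat{C}$) case then follows by the contact-algebra duality in $RC(X)$.

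For (ii), the forward direction takes disjoint regular closed $a,b$, applies $\kappa$-normality to pick disjoint open $U\supseteq a$, $V\supseteq b$, and then sets $c=Cl(Int(X\setminus U))$ and $d=Cl(Int(X\setminus V))$, which are regular closed by construction and disjoint from $a,b$ respectively (since $c\subseteq X\setminus U$ and $a\subseteq U$, and symmetrically). The decisive step is $c\cup d=X$: since $U\cap V=\varnothing$ and $U,V$ are open, $Cl(U)\cap Cl(V)$ has empty interior (any nonempty open set inside it would, as a neighbourhood of a point in $Cl(V)$, have to meet $V$, contradicting $U\cap V=\varnothing$ after intersecting with $U$), so $Int(X\setminus U)\cup Int(X\setminus V)=X\setminus(Cl(U)\cap Cl(V))$ is dense in $X$, forcing the union of its closures to equal $X$. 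The converse is direct: given disjoint regular closed $a,b$, apply (Nor 1) to produce $c,d$ with $c\cup d=X$ and $a\cap c=b\cap d=\varnothing$; the open sets $X\setminus c\supseteq a$ and $X\setminus d\supseteq b$ are disjoint because $c\cup d=X$.

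For (iii), the observation is that in $RC(X)$ a decomposition $a\cup b=X$ and $a\cap b=\varnothing$ with $a,b$ regular closed forces $a$ and $b$ to be complementary clopen sets; under this translation (Con C) becomes exactly the assertion that $X$ admits no nontrivial partition into nonempty clopen sets, i.e.\ connectedness, and both implications are then immediate. For (iv), compact Hausdorff implies normal (standard argument turning pointwise Hausdorff separation into uniform separation via finite subcovers), hence $\kappa$-normal, so (ii) delivers (Nor 1)--(Nor 3); and compact Hausdorff is regular, hence semiregular and weakly regular (inside any nonempty open $U$, regularity around a chosen point yields a smaller open whose closure lies in $U$), so (i) delivers (Ext C) and (Ext $\widehat{C}$). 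The main technical obstacle throughout is the covering claim in (ii): one cannot use direct set-theoretic containment, and must instead invoke the density of $Int(X\setminus U)\cup Int(X\setminus V)$ to recover $c\cup d=X$ after the regular-closed rounding.
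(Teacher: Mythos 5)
Your overall strategy is sound, and it is in fact considerably more detailed than the paper's own proof, which simply cites D\"untsch--Winter \cite{dw} for the (Ext C), (Nor 1), (Con C) cases and invokes the equivalence of the remaining axioms in $RC(X)$ --- a reduction you also use, correctly. Your arguments for (ii) (including the key density claim that $Int(Cl(U)\cap Cl(V))=\varnothing$ for disjoint open $U,V$), for (iii), for (iv), and for the forward direction of (i) are all fine.

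However, the backward direction of (i) contains a genuine error. Given a nonempty open $W$ and a nonempty regular open $w\subseteq W$, you apply (Ext C) to $Cl(w)$ and obtain a nonempty regular closed $e$ \emph{disjoint} from $Cl(w)$. Such an $e$ lies in the complement of $w$ and there is no reason whatsoever for it to lie inside $W$; e.g.\ in $X=[0,1]$ with $W=w=(0,\tfrac12)$ one may get $e=[\tfrac34,1]$, whose interior certainly does not have closure inside $W$. (You also silently drop the case $Cl(w)=X$, where (Ext C) cannot be applied at all.) The correct move is to apply (Ext C) to the \emph{complement} $a=X\smallsetminus w$, which is regular closed (since $w$ is regular open) and $\not=X$ (since $w\not=\varnothing$): this yields a nonempty regular closed $e$ with $e\cap a=\varnothing$, i.e.\ $e\subseteq w\subseteq W$, and then $V=Int(e)$ is a nonempty open set with $Cl(V)=e\subseteq W$, which is exactly what weak regularity requires. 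With this repair the whole proof goes through.
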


\begin{proof} A variant of the above lemma concerning only axioms (Ext C), (Nor 1) and (Con C) was proved, for instance, in  \cite{dw}. Having in mind the equivalence of some of the mentioned axioms in $RC(X)$, it is obvious that the present formulation is equivalent to the cited result from \cite{dw}.
\end{proof}

\subsection{Looking for  good topological representations of\\ EDC-lattices}

The following topological theorem proved in \cite{dmvw2} (Theorem 4) gives necessary and sufficient
conditions for a closed base of a topology to be semiregular.

\begin{theorem}\label{semiregularity1} {\bf First characterization theorem for
semiregularity.}\\
Let $X$ be a topological space and let $\mathbf{CB}(X)$ be a closed
basis for $X$. Suppose that "$\cdot$" is a binary operation defined
on the set $\mathbf{CB}(X)$ such that
$(\mathbf{CB}(X),\varnothing,X,\cup,\cdot)$ is a lattice. Then:
\begin{enumerate}
    \item The following conditions are equivalent:
    \begin{enumerate}
        \item $\mathbf{CB}(X)$ is $U$-extensional.
        \item $\mathbf{CB}(X)\subseteq RC(X)$.
        \item For all $a,b\in \mathbf{CB}(X)$, $a\cdot b=Cl(Int(a\cap
        b))$.
        \item $(\mathbf{CB}(X),\varnothing,X,\cup,\cdot)$ is a dually dense sublattice of the Boolean algebra $RC(X)$.
    \end{enumerate}
    \item If any of the (equivalent) conditions (a),(b),(c) or (d)
    of 1.\ is fulfilled then:
    \begin{enumerate}
        \item $(\mathbf{CB}(X),\varnothing,X,\cup,\cdot)$ is a $U$-extensional distributive
        lattice.
        \item $X$ is a semiregular space.
    \end{enumerate}
\end{enumerate}
\end{theorem}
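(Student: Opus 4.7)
The strategy is to establish the equivalences via a short circuit of implications, using as the central tool the open basis $\{X\setminus c : c\in \mathbf{CB}(X)\}$ induced by the closed basis: every open set of $X$ can be written as a union of sets of this form. Distributivity of the lattice $(\mathbf{CB}(X),\cup,\cdot)$ --- standard in the EDC-lattice setting of this paper --- will be used at the crucial step.

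For $(a)\Rightarrow (b)$, I would fix $a\in\mathbf{CB}(X)$ and prove $a\subseteq Cl(Int(a))$ by contradiction: a point $p\in a\setminus Cl(Int(a))$ is separated from $Cl(Int(a))$ by some $c\in\mathbf{CB}(X)$ containing the latter but not $p$ (exists because $Cl(Int(a))$ is closed and $\mathbf{CB}(X)$ is a closed basis). Applying $U$-extensionality to $a\not\le c$ gives $d\in\mathbf{CB}(X)$ with $a\cup d = X$ and $c\cup d\ne X$; any point outside $c\cup d$ then lies in the open set $X\setminus d\subseteq a$, hence in $Int(a)\subseteq c$ --- contradiction. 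The reverse $(b)\Rightarrow (a)$ runs dually: given $a\not\subseteq b$ in $\mathbf{CB}(X)\subseteq RC(X)$, pick $p\in a\setminus b$, use $a=Cl(Int(a))$ to locate $q\in (X\setminus b)\cap Int(a)$, shrink to a basic open $X\setminus e\subseteq (X\setminus b)\cap Int(a)$, and verify $a\cup e = X$ while $q\notin b\cup e$.

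The central step is $(b)\Rightarrow (c)$. One inclusion, $a\cdot b\subseteq Cl(Int(a\cap b))$, is immediate from $a\cdot b\in RC(X)$ together with $a\cdot b\subseteq a\cap b$. For the reverse, assume $p\in Cl(Int(a\cap b))\setminus(a\cdot b)$; the basis property yields $c_0\in\mathbf{CB}(X)$ with $a\cdot b\subseteq c_0$ and $p\notin c_0$, and then a basic open neighbourhood $X\setminus c_1\subseteq(X\setminus c_0)\cap Int(a\cap b)$ of some point $q$. This gives $a+c_1=X=b+c_1$ together with $a\cdot b\subseteq c_1$. Distributivity now forces $(a\cdot b)+c_1=(a+c_1)\cdot(b+c_1)=X$, which combined with $a\cdot b\subseteq c_1$ collapses the left side to $c_1$, giving $c_1=X$ and contradicting $q\notin c_1$. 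This is the main obstacle: without distributivity the final lattice manipulation fails.

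The closing implications and Part~2 are routine. For $(c)\Rightarrow (d)$, condition (c) identifies $\cdot$ with the meet of $RC(X)$, so $\mathbf{CB}(X)$ is a sublattice; dual density is automatic from the basis property, since for any $a\in RC(X)$ with $a\ne X$ the non-empty open set $X\setminus a$ contains some $X\setminus c$ giving $a\subseteq c\ne X$. $(c)\Rightarrow (b)$ is pure lattice absorption: $a=a\cdot(a\cup b)=Cl(Int(a\cap(a\cup b)))=Cl(Int(a))$. And $(d)\Rightarrow (b)$ is trivial. For Part~2, distributivity of $\mathbf{CB}(X)$ follows from $(d)$, since every sublattice of the Boolean algebra $RC(X)$ is distributive; and $X$ is semiregular by $(b)$, since $\mathbf{CB}(X)\subseteq RC(X)$ is by definition a closed basis of regular closed sets.
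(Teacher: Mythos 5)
The paper itself contains no proof of this theorem: it is imported from \cite{dmvw2} (Theorem 4 there) and stated without argument, so your proposal has to be judged on its own merits. Most of it holds up: the equivalence $(a)\Leftrightarrow(b)$ via the open basis of complements, the absorption argument for $(c)\Rightarrow(b)$, the steps $(c)\Rightarrow(d)$ and $(d)\Rightarrow(b)$, and Part~2 are all correct. The genuine gap is exactly where you yourself point: in $(b)\Rightarrow(c)$ you compute $(a\cdot b)+c_1=(a+c_1)\cdot(b+c_1)$, which requires distributivity of $(\mathbf{CB}(X),\varnothing,X,\cup,\cdot)$. But distributivity is \emph{not} a hypothesis of the theorem --- it is conclusion 2(a), which your scheme only reaches downstream via $(d)$. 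Calling it ``standard in the EDC-lattice setting'' does not discharge it; as the theorem is stated, only ``lattice'' is assumed, and your chain $(b)\Rightarrow(c)\Rightarrow(d)\Rightarrow\text{distributivity}$ is circular.

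The circularity is not cosmetic: without distributivity the implication $(b)\Rightarrow(c)$ is false, so no reorganization of your argument can close the gap. Take the finite space $X=\{a_1,a_2,a_3,a_4,b_{12},b_{23},b_{34},b_{14}\}$ with minimal open neighbourhoods $U_{a_i}=\{a_i\}$ and $U_{b_{ij}}=\{a_i,a_j,b_{ij}\}$ (indices forming the $4$-cycle $1\,2\,3\,4$), put $A_i=Cl(\{a_i\})$ and $A_S=\bigcup_{i\in S}A_i$, and let $\mathbf{CB}(X)$ consist of $\varnothing$ together with all $A_S$ for $S$ in the union-closure of $\{\{1,2\},\{2,3\},\{3,4\},\{1,4\}\}$ (the four adjacent pairs, the four triples, and $\{1,2,3,4\}$). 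One checks that this family is closed under $\cup$, is a closed basis of $X$, is a lattice under inclusion, and that every member is regular closed, so $(a)$ and $(b)$ hold; yet $A_{\{1,2,3\}}\cdot A_{\{1,3,4\}}=\varnothing$ while $Cl(Int(A_{\{1,2,3\}}\cap A_{\{1,3,4\}}))=A_1\cup A_3\neq\varnothing$, so $(c)$ and $(d)$ fail, and indeed $\{\varnothing,A_{\{1,2\}},A_{\{1,2,3\}},A_{\{1,3,4\}},X\}$ is a copy of the non-distributive lattice $N_5$. The upshot is that the statement needs distributivity of $\mathbf{CB}(X)$ as a standing hypothesis (which is how it is actually used in this paper, where $\mathbf{CB}(X)$ is always the image of a distributive lattice under a Stone-type map, as in Corollary \ref{ext:semireg1}); once that hypothesis is added, your proof is complete and conclusion 2(a) reduces to $U$-extensionality. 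You should either state that added hypothesis explicitly or flag that the theorem as transcribed cannot be proved.
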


The following is a  corollary of the above theorem.

\begin{corollary}\label{ext:semireg1} \cite{dmvw2}
Let $X$ be a topological space, let $L=(L,0,1,+,\cdot)$ be a lattice
and let $h$ be an embedding of the upper semi-lattice $(L,0,1,+)$
into the lattice $C(X)$ of closed sets of $X$. Suppose that the
set $\mathbf{CB}(X)=\{h(a): a\in L\}$ forms a closed basis for the
topology of $X$. Then:
\begin{enumerate}
    \item The following conditions are equivalent:
    \begin{enumerate}
        \item $L$ is $U$-extensional.
        \item $\mathbf{CB}(X)\subseteq RC(X)$.
        \item For all $a,b\in L$, $h(a\cdot b)=Cl(Int(h(a)\cap
        h(b)))$.
        \item $h$ is a dually dense embedding of $L$ into the Boolean algebra $RC(X)$.
    \end{enumerate}
    \item If any of the (equivalent) conditions (a),(b),(c) or (d)
    of 1.\ is fulfilled then:
    \begin{enumerate}
        \item $L$ is a $U$-extensional distributive lattice.
        \item $X$ is a semiregular space.
    \end{enumerate}
\end{enumerate}
\end{corollary}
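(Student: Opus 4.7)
The plan is to deduce this corollary directly from Theorem \ref{semiregularity1} by transporting the lattice structure of $L$ onto $\mathbf{CB}(X)$ via the embedding $h$. Since $h$ is an injective upper semi-lattice homomorphism from $(L,0,1,+)$ into $C(X)$, I would first define a binary operation $\cdot$ on $\mathbf{CB}(X)$ by $h(a)\cdot h(b):=h(a\cdot_L b)$. This is well-defined by injectivity of $h$, and trivially $(\mathbf{CB}(X),\varnothing,X,\cup,\cdot)$ becomes a bounded lattice which is isomorphic (as a lattice) to $L$ through $h$. Once this is done, all the hypotheses of Theorem \ref{semiregularity1} are in place.

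Next I would match the four conditions of part~1 of the corollary with the four conditions of part~1 of the theorem. Condition~(a), $U$-extensionality of $L$, is equivalent to $U$-extensionality of $\mathbf{CB}(X)$ through the lattice isomorphism $h$ (the predicates $a+b=1$ and $a\le b$ are preserved by $h$). Condition~(b) is identical in both statements. Condition~(c) unfolds via the definition of $\cdot$ on $\mathbf{CB}(X)$: $h(a\cdot b)=Cl(Int(h(a)\cap h(b)))$ is just a rewriting of $h(a)\cdot h(b)=Cl(Int(h(a)\cap h(b)))$. Finally, condition~(d) in the corollary says $h$ is a dually dense embedding into $RC(X)$, which via the isomorphism is precisely the assertion that $\mathbf{CB}(X)$ is a dually dense sublattice of $RC(X)$, i.e.\ condition~(d) of the theorem. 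The equivalences in part~1 then follow at once from Theorem \ref{semiregularity1}.1.

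For part~2 of the corollary, assuming any one of the equivalent conditions holds, Theorem \ref{semiregularity1}.2 gives that $\mathbf{CB}(X)$ is a $U$-extensional distributive lattice and that $X$ is semiregular. Distributivity and $U$-extensionality transfer back to $L$ across the isomorphism $h$, so (2a) follows; (2b) is already a statement about $X$ alone and requires no transfer.

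The only delicate point, and thus the potential obstacle, is the well-definedness of the transported meet on $\mathbf{CB}(X)$: one must verify that the operation $\cdot$ defined through $h$ genuinely makes $\mathbf{CB}(X)$ a lattice in the sense required by Theorem \ref{semiregularity1}, and in particular that the order it induces agrees with set inclusion on $\mathbf{CB}(X)$ (since $h$ preserves $+$ and injectivity forces $h(a)\subseteq h(b)$ iff $a\le_L b$). Once that bookkeeping is in place, the corollary reduces to a direct citation of the theorem, so no further work is needed.
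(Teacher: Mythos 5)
Your proposal is correct and is precisely the intended derivation: the paper states this result without proof as an immediate consequence of Theorem \ref{semiregularity1}, and transporting the meet of $L$ onto $\mathbf{CB}(X)$ via the injective semilattice homomorphism $h$ (checking that the induced order is set inclusion, as you do) is exactly the bookkeeping needed to match the two statements condition by condition.
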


A dual version of Theorem \ref{semiregularity1} is the following one.

\begin{theorem}\label{semiregularity} {\bf Second characterization theorem for
semiregularity.}\\
Let $X$ be a topological space and let $\mathbf{OB}(X)$ be an open
basis for $X$. Suppose that $+$ is a binary operation defined
on the set $\mathbf{OB}(X)$ such that
$(\mathbf{OB}(X),\varnothing,X,\cap,+)$ is a lattice. Then:
\begin{enumerate}
    \item The following conditions are equivalent:
    \begin{enumerate}
        \item $\mathbf{OB}(X)$ is $O$-extensional.
        \item $\mathbf{OB}(X)\subseteq RO(X)$.
        \item For all $a,b\in \mathbf{OB}(X)$, $a+b=Int(Cl(a\cup
        b))$.
        \item $(\mathbf{OB}(X),\varnothing,X,\cap,+)$ is a dually dense sublattice of the Boolean algebra $RO(X)$.
    \end{enumerate}
    \item If any of the (equivalent) conditions (a),(b),(c) or (d)
    of 1.\ is fulfilled then:
    \begin{enumerate}
        \item $(\mathbf{OB}(X),\varnothing,X,\cap,+)$ is an $O$-extensional distributive
        lattice.
        \item $X$ is a semiregular space.
    \end{enumerate}
\end{enumerate}
\end{theorem}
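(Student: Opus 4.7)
The plan is to derive this theorem directly from Theorem \ref{semiregularity1} by a duality argument based on set-complementation. Let $\phi$ denote the map $a\mapsto X\setminus a$. It is a bijection from the open sets of $X$ onto the closed sets, restricting to a bijection between $RO(X)$ and $RC(X)$, and it swaps $\cap$ with $\cup$, $\varnothing$ with $X$, and the inclusion order with its reverse. Given the hypotheses on $\mathbf{OB}(X)$, I would introduce the complementary family $\mathbf{CB}(X)=\{X\setminus a:a\in\mathbf{OB}(X)\}$, transport the operation $+$ to an operation $\cdot$ on $\mathbf{CB}(X)$ via $\phi(a)\cdot\phi(b)=\phi(a+b)$, and observe that $\mathbf{CB}(X)$ is then a closed basis of $X$ on which $(\mathbf{CB}(X),\varnothing,X,\cup,\cdot)$ is a lattice, namely the image of $(\mathbf{OB}(X),\varnothing,X,\cap,+)$ under the order-reversing isomorphism $\phi$.

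The next step is to check that each of the four conditions (a)--(d) for $\mathbf{OB}(X)$ corresponds, under $\phi$, to the analogous condition in Theorem \ref{semiregularity1}: $O$-extensionality translates to $U$-extensionality (the defining formulas are exact De Morgan duals); $a\in RO(X)$ iff $\phi(a)\in RC(X)$; the identity $a+b=Int(Cl(a\cup b))$ transforms into $\phi(a)\cdot\phi(b)=Cl(Int(\phi(a)\cap\phi(b)))$ using $Int(B)=X\setminus Cl(X\setminus B)$; and dual density in $RO(X)$ corresponds to the condition in $RC(X)$ obtained by applying $\phi$. Theorem \ref{semiregularity1} then yields the equivalence of (a)--(d) and the two consequences in part 2, since distributivity and semiregularity are preserved under the translation.

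If the referee prefers a self-contained argument, I would mirror the proof of Theorem \ref{semiregularity1} step by step, with open sets, $Int$, and intersections in place of closed sets, $Cl$, and unions. The key implication is (a)$\Rightarrow$(b): assume $O$-extensionality and, for contradiction, pick $a\in\mathbf{OB}(X)$ with $a\not=Int(Cl(a))$. Then some point $x\in Int(Cl(a))\setminus a$ admits a basic neighborhood $b\in\mathbf{OB}(X)$ with $x\in b\subseteq Int(Cl(a))\subseteq Cl(a)$. Since $x\notin a$, $b\not\subseteq a$, so by (a) there is $c\in\mathbf{OB}(X)$ with $b\cap c\ne\varnothing$ and $a\cap c=\varnothing$; but $c$ is open and meets $Cl(a)$ (via $b$), hence meets $a$, a contradiction. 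The implications (b)$\Rightarrow$(c)$\Rightarrow$(d)$\Rightarrow$(a) and the consequences in part 2 are then routine bookkeeping, following the pattern of Theorem \ref{semiregularity1}.

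The main point requiring care is the interpretation of ``dually dense sublattice of $RO(X)$''. The open-base hypothesis most directly yields the dense condition (every nonempty element of $RO(X)$ contains a nonempty basis element), whereas the complement translation of the dually dense conclusion of Theorem \ref{semiregularity1} yields precisely this dense condition on the open side. Hence in executing either strategy I would verify that, under (a)--(c), the relevant density property of $\mathbf{OB}(X)$ inside $RO(X)$ holds automatically from the open-base assumption, so that (d) adds nothing beyond (a)--(c). This bookkeeping around (d) is the only place where the proof is not pure symbol-by-symbol dualisation of the closed-set argument.
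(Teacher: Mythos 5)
Your proposal is correct and takes essentially the paper's own route: the paper gives no proof of Theorem \ref{semiregularity} at all, presenting it merely as ``a dual version'' of Theorem \ref{semiregularity1}, and your set-complementation map $a\mapsto X\setminus a$ is precisely the duality being invoked, with all four clauses translating as you describe. Your closing remark about clause (d) is the one substantive point: literal dualisation of ``dually dense in $RC(X)$'' yields ``dense in $RO(X)$'', which is what Corollary \ref{ext:semireg2} in fact asserts, so the word ``dually'' in clause (d) of the printed theorem appears to be a slip that your reading correctly repairs.
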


The following is a corollary of the above theorem.

\begin{corollary}\label{ext:semireg2}
Let $X$ be a topological space, let $L=(L,0,1,+,\cdot)$ be a lattice
and let $h$ be an embedding of the lower  semi-lattice $(L,0,1,\cdot)$
into the lattice $O(X)$ of open sets of $X$. Suppose that the
set $\mathbf{OB}(X)=\{h(a): a\in L\}$ forms an open basis for the
topology of $X$. Then:
\begin{enumerate}
    \item The following conditions are equivalent:
    \begin{enumerate}
        \item $L$ is $O$-extensional.
        \item $\mathbf{OB}(X)\subseteq RO(X)$.
        \item For all $a,b\in L$, $h(a + b)=Int(Cl(h(a)\cup
        h(b)))$.
        \item $h$ is a  dense embedding of $L$ into the Boolean algebra $RO(X)$.
    \end{enumerate}
    \item If any of the (equivalent) conditions (a),(b),(c) or (d)
    of 1.\ is fulfilled then:
    \begin{enumerate}
        \item $L$ is a $O$-extensional distributive lattice.
        \item $X$ is a semiregular space.
    \end{enumerate}
\end{enumerate}
\end{corollary}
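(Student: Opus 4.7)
The plan is to reduce Corollary \ref{ext:semireg2} to Theorem \ref{semiregularity} by transporting the lattice structure of $L$ onto $\mathbf{OB}(X)=h(L)$ via the embedding $h$. The first step is to define a binary operation $+$ on $\mathbf{OB}(X)$ by $h(a)+h(b):=h(a+b)$, which is well defined because $h$ is injective. Since $h$ is assumed to embed the lower semilattice $(L,0,1,\cdot)$ into $(O(X),\cap)$, we automatically have $h(a\cdot b)=h(a)\cap h(b)$, so the meet on $\mathbf{OB}(X)$ as a sublattice is ordinary set-theoretic intersection; together with the transported join and the bounds $\varnothing=h(0)$, $X=h(1)$, the structure $(\mathbf{OB}(X),\varnothing,X,\cap,+)$ becomes a bounded lattice and, by construction, $h\colon L\to\mathbf{OB}(X)$ is a lattice isomorphism. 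This is exactly the hypothesis required to apply Theorem \ref{semiregularity}.

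Next I would invoke that theorem and read off each equivalence in part 1 of the corollary from the corresponding equivalence in the theorem, using the isomorphism $h$. $O$-extensionality is a first-order lattice property, so (a) for $L$ matches (a) for $\mathbf{OB}(X)$; condition (b) is literally the same in both statements. For (c), the theorem's identity $u+v=Int(Cl(u\cup v))$ for $u,v\in \mathbf{OB}(X)$ translates, under $u=h(a)$, $v=h(b)$, $u+v=h(a+b)$, to exactly the identity $h(a+b)=Int(Cl(h(a)\cup h(b)))$ of the corollary. For (d), combining (c) with $\mathbf{OB}(X)\subseteq RO(X)$ gives that $h$ is a lattice embedding $L\to RO(X)$, and density of $\mathbf{OB}(X)$ as a sublattice of $RO(X)$ then upgrades $h$ to a dense embedding in the sense of Definition \ref{dense and dual dense}. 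Part 2 of the corollary transfers directly from part 2 of the theorem: $L$ inherits $O$-extensionality and distributivity through the isomorphism $h$, while semiregularity of $X$ is a property of $X$ alone.

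I do not anticipate any substantive obstacle; the whole argument is organised bookkeeping performed along the isomorphism $h$, and the content has already been supplied by Theorem \ref{semiregularity}. The only spot that deserves a moment of care is clause (d): one must check that ``dense'' (rather than the ``dually dense'' notion that appeared in the closed-set version, Corollary \ref{ext:semireg1}) is the correct sublattice condition to transport. This is forced by the direction of the duality, because an open basis is characterised by the property that every nonempty open set contains a nonempty basic open set, which is the density condition applied inside $RO(X)$.
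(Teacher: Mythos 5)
Your proposal is correct and matches the paper's approach: the paper states Corollary \ref{ext:semireg2} without a separate proof, presenting it as an immediate consequence of Theorem \ref{semiregularity}, and your argument is precisely the transport-along-$h$ bookkeeping that this deduction requires. Your closing observation about clause (d) is also well taken — the open-basis property yields the \emph{dense} condition of Definition \ref{dense and dual dense} inside $RO(X)$, consistent with how the paper later uses this corollary in Remark \ref{important conclusion}(ii).
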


\begin{remark}\label{important conclusion} {\rm  (i)  Let $\underline{D}=(D,C,\widehat{C}, \ll)$ be an EDC-lattice. Corollary \ref{ext:semireg1} shows that if we want to represent $\underline{D}$ by a dually dense embedding $h$ into the contact algebra $RC(X)$ of some topological space $X$ such that the topology of $X$ to be determined by the set $\mathbf{CB}(X)=\{h(a): a\in D\}$ considered as a closed base for $X$ we must require that the lattice $\underline{D}$ is U-extensional, i.e. to satisfy the axiom (Ext $\widehat{O}$)  (extensionality of underlap).
If in addition we want to apply the good properties of Lemma \ref{dense} then we must assume that $h$ is also a C-separable embedding into $RC(X)$. But then Corollary \ref{limitation} implies that $\underline{D}$ must satisfy also the axioms (U-rich $\ll $)  and (U-rich $\widehat{C}$).

(ii) Similar to the above conclusion is the following. Corollary \ref{ext:semireg2} shows that if we want to represent $\underline{D}$ by a  dense embedding $h$ into the contact algebra $RO(X)$ of some topological space $X$ such that the topology of $X$ to be determined by the set $\mathbf{OB}(X)=\{h(a): a\in D\}$ considered as an open base for $X$ we must require that the lattice $\underline{D}$ is O-extensional, i.e. to satisfy the axiom (Ext O)  (extensionality of overlap).
If in addition we want to apply the good properties of Lemma \ref{dual dense} then we must assume that $h$ is also a $\widehat{C}$-separable embedding into $RO(X)$. But then Corollary \ref{limitation} implies that $\underline{D}$ must satisfy also the axioms (O-rich $\ll $)  and (O-rich $\widehat{C}$). $\square$ }
\end{remark}

\begin{definition}\label{U-rich and O-rich lattices} {\bf U-rich and O-rich EDC-lattices.}  Let $\underline{D}=(D,C,\widehat{C}, \ll)$ be an EDC-lattice. Then:

(i)  $\underline{D}$ is called U-rich EDC-lattice if it satisfies the axioms (Ext $\widehat{O}$),  (U-rich $\ll $) and (U-rich $\widehat{C}$).

(ii) $\underline{D}$ is called O-rich EDC-lattice if it satisfies the axioms (Ext O),  (O-rich $\ll $) and (O-rich $\widehat{C}$).

\end{definition}

 The aim of the next sections is to develop the topological representation theory of U-rich and O-rich EDC-lattices.
\section{Topological representation theory of   U-rich EDC-lattices}\label{Uext}
 The aim of this section is to develop a topological representation theory for U-rich EDC-latices. According to Theorem \ref{semiregularity1} we will look for a representation with regular closed sets. To realize this we will follow the  representation theory of contact algebras by regular closed sets developed in \cite{dv,V}, updating the results of Section 4 from \cite{dmvw2} to the case of U-rich EDC-lattices. We will consider also extensions of U-rich  EDC-lattices with some of the additional axioms mentioned in Section \ref{additional axioms}.  The scheme of the representation procedure   is the following: for each
U-rich  EDC-lattice  $D$ from a given
class, determined by the additional axioms, we will  do the following:

\begin{itemize}

\item  Define a set $X(D)$ of "abstract points" of $\underline{D}$,

\item define a topology in $X(D)$ by the set $\mathbf{CB}(X(D))=\{h(a): a\in D\}$, considered as a closed base of the topology, where $h$ is the intended embedding of Stone type: $h(a)=\{\Gamma: \Gamma$ is "abstract point" and $a\in \Gamma\}$. $X(D)$ is called the \emph{canonical topological space of $\underline{D}$} and $h$ is called \emph{canonical embedding},

\item establish that $h$ is a dual dense embedding of the lattice $\underline{D}$ into the Boolean algebra $RC(X(D))$ of regular closed sets of  the space $X(D)$.

 \end{itemize}

  We will consider separately the cases of representations in $T_{0}$, $T_{1}$ and $T_{2}$ spaces which requires introducing different "abstract points".

\subsection{Representations in $T_{0}$ spaces}\label{T0repesentations}

Troughout this section we consider that $\underline{D}=(D,C, \widehat{C}, \ll)$ is a U-rich EDC-lattice.

\medskip

{\bf Abstract points of $\underline{D}$.}

\medskip

As in \cite{dmvw2}, we consider the abstract points of $\underline{D}$ to be clans (see \cite{dv} for the origin of this notion).  The definition is the following. A subset $\Gamma\subseteq D$ is a \emph{clan} if it satisfies the following conditions:

(Clan 1) $1\in \Gamma$, $0\not\in \Gamma$,

(Clan 2) If $a\in \Gamma$ and $a\leq b$, then $b\in \Gamma$,

(Clan 3) If $a+b\in \Gamma$, then $a\in \Gamma$ or $b\in \Gamma$,

(Clan 4) If $a,b\in \Gamma$ then $aCb$.

$\Gamma$ is a \emph{maximal clan} if it is maximal with respect to the set-inclusion. We denote by CLAN(D) (MaxCLAN(D) ) the set of all (maximal)  clans of $\underline{D}$.

The notion of clan is an abstraction  from the following natural example. Let $X$ be a topological space and $RC(X)$ be the contact algebra of regular-closed subsets of $X$ and let $x\in X$. Then the set $\Gamma_{x}=\{a\in RC(X):x\in a\}$ is a clan.

Now we will present a construction of clans which is similar to the constructions of clans in contact algebras. First we will introduce a new canonical relation between prime filters.

\begin{definition}\label{new canonical relation} Let $U,V$ be prime filters. Define a new  canonical relation  $R_{C}$  (\emph{$R_{C}$-canonical relation}) between prime filters as follows:

$UR_{C}V\leftrightarrow_{def} (\forall a\in U)(\forall b\in V)(aCb)$.

\end{definition}
Let us note that the  relation $R_{C}$ depends only on $C$ and can be defined also for filters. It is different from the canonical relation between prime filters defined in Section \ref{relrepresentation}, but  the presence of U-rich axioms makes it equivalent to $R^{c}$ as it can be seen from the following lemma.

\begin{lemma}\label{new canonical relation1}

(i) $R_{C}$ is reflexive and symmetric relation.

(ii) If $\underline{D}$ satisfies the axioms (U-rich $\ll $) and  (U-rich $\widehat{C}$) then $R_{C}=R^{c}$.

\end{lemma}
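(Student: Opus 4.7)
The plan is to prove (i) directly from the axioms for $C$ and from basic properties of prime filters, and to prove (ii) by showing both inclusions, where only the nontrivial inclusion $R_{C}\subseteq R^{c}$ uses the U-rich hypotheses.

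For (i), symmetry is immediate from axiom (C4). For reflexivity, I would take a prime filter $U$ and two elements $a,b\in U$; then $a\cdot b\in U$, and since $U$ is proper, $a\cdot b\neq 0$, so axiom (C5) yields $aCb$, as required.

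For (ii), the inclusion $R^{c}\subseteq R_{C}$ is trivial because the first conjunct in the definition of $R^{c}$ is exactly the defining condition of $R_{C}$. For the converse, I would assume $UR_{C}V$ and verify the three remaining conjuncts in the definition of $R^{c}$ by contradiction, in each case feeding the failure into a U-rich axiom and using primeness to derive a witness that contradicts $UR_{C}V$.

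\emph{Case $a\notin U,\ b\notin V$.} Suppose $a\,\overline{\widehat{C}}\,b$. By (U-rich $\widehat{C}$) there exist $c,d$ with $a+c=1$, $b+d=1$ and $c\,\overline{C}\,d$. Since $1\in U$ and $U$ is prime, $a\notin U$ forces $c\in U$; similarly $d\in V$. Then $UR_{C}V$ gives $cCd$, contradicting $c\,\overline{C}\,d$. Hence $a\,\widehat{C}\,b$.

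\emph{Case $a\in U,\ b\notin V$.} Suppose $a\ll b$. By (U-rich $\ll$) there exists $c$ with $b+c=1$ and $a\,\overline{C}\,c$. Primeness of $V$ together with $b\notin V$ gives $c\in V$, and then $UR_{C}V$ yields $aCc$, a contradiction. Hence $a\not\ll b$.

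\emph{Case $a\notin U,\ b\in V$.} Symmetric to the previous case, using symmetry of $R_{C}$ established in (i).

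No step looks like a serious obstacle; the only delicate point is choosing the correct U-rich axiom in each case so that the witnesses produced have the shape ($a+c=1$ or $b+c=1$) needed for primeness to extract an element of the appropriate filter. Once the cases are set up as above, each contradiction is one application of the defining condition of $R_{C}$.
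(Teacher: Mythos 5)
Your proof is correct and follows essentially the same route as the paper: symmetry from (C4) and reflexivity from primeness plus (C5) for part (i), and for part (ii) the trivial inclusion $R^{c}\subseteq R_{C}$ together with a case-by-case contradiction argument feeding each failure into the appropriate U-rich axiom and using primeness of the filters to extract the contradicting witness. The paper organizes the converse inclusion as four explicit claims, but the content is identical to your three cases plus the symmetry reduction.
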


\begin{proof} (i) follows from the axioms (C4) and (C5).

(ii) The inclusion $R^{c}\subseteq R_{C}$ follows directly by the definition of $R^{c}$. For the converse inclusion suppose $UR_{C}V$. To show
$U R^{c} V$  we have to inspect the four cases of the definition of $R^{c}$.

\smallskip

\noindent {\bf Claim 1}: $a\in U$ and $b\in V$ implies $aCb$. This is just by the definition of $R_{C}$ .

\smallskip

\noindent{\bf Claim 2}: $a\in U$ and $b\not\in V$ implies $a\not\ll b$. For the sake of contradiction suppose $a\in U$ and $b\not\in V$ but $a\ll b$. Then by axiom (U-rich $\ll $) ( $a\ll b\rightarrow (\exists c)(b+c=1$ and $a\overline{C}c)$, we obtain $b+c=1$ and $a\overline{C}c$. Conditions $b+c=1$ and
$b\not\in V$ imply $c\in V$. But $a\in U$, so $aCc$ - a contradiction.

\smallskip

\noindent{\bf Claim 3:} $a\not\in U$ and $b\in V$ implies $b\not\ll a$. The proof is similar to the proof of Claim 2.

\smallskip

\noindent{\bf Claim 4:} $a\not\in U$ and $b\not\in V$ implies $a\widehat{C}b$. The proof is similar to the proof of Claim 2 by the use of axiom

(U-rich $\widehat{C}$) $a\overline{\widehat{C}}b\rightarrow    (\exists c, d)(a+c=1, b+c=1$ and $c\overline{C}d)$. $\square$

\end{proof}

The following statement lists some facts about the relation $R_{C}$.

 \begin{facts}\label{facts for R_{C}} \cite{DuV,dv,dmvw2}.
 \begin{enumerate}
  \item Let $F,G$ be filters and $FR_{C}G$ then there are prime filters $U,V$ such that $F\subseteq U$, $G\subseteq V$ and $UR_{C}V$.

\item  For all $a,b\in D$: $aCb$ iff there exist prime filters $U,V$ such that $UR_{C} V$, $a\in U$ and $b\in V$.

 \end{enumerate}
 \end{facts}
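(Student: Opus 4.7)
The plan is to establish (2) as a relatively easy consequence of (1) together with axiom (C2), and then devote the main effort to (1), which is the principal extension lemma.

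For (2), the direction ($\Leftarrow$) is immediate: if $U R_{C} V$ with $a \in U$ and $b \in V$, then $aCb$ by the very definition of $R_{C}$. For ($\Rightarrow$), given $aCb$, I would set $F = [a)$ and $G = [b)$. These are filters by Facts \ref{facts}, and I claim $F R_{C} G$: for any $x \in F, y \in G$ we have $a \leq x$ and $b \leq y$, so $xCy$ follows from $aCb$ by axiom (C2). Then invoking (1) with these $F, G$ produces the desired prime filters $U \supseteq F$ and $V \supseteq G$ with $U R_{C} V$, and clearly $a \in U$, $b \in V$.

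The substantial work lies in (1). The strategy is a two-stage Zorn argument. First, consider the family $\mathcal{M}_{1} = \{F' : F' \text{ is a filter}, F \subseteq F', F' R_{C} G\}$. It is nonempty (it contains $F$) and closed under unions of chains, since $R_{C}$ is preserved under directed unions of the left component. Zorn yields a maximal element $U \in \mathcal{M}_{1}$. The key step is to show $U$ is prime. Suppose for contradiction that $a+b \in U$ while $a \notin U$ and $b \notin U$. Then $U \oplus [a) \supsetneq U$ and $U \oplus [b) \supsetneq U$, so by maximality neither satisfies the $R_{C}$-relation with $G$. Using Facts \ref{facts}(4)-style manipulation this gives $u_{1}, u_{2} \in U$ and $g_{1}, g_{2} \in G$ with $(u_{1} \cdot a) \overline{C} g_{1}$ and $(u_{2} \cdot b) \overline{C} g_{2}$. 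Setting $u = u_{1} \cdot u_{2} \in U$ and $g = g_{1} \cdot g_{2} \in G$, axiom (C2) gives $(u \cdot a) \overline{C} g$ and $(u \cdot b) \overline{C} g$, hence by (C3') and (C4), $(u \cdot (a+b)) \overline{C} g$. But $u \cdot (a+b) \in U$ and $g \in G$, contradicting $U R_{C} G$. Hence $U$ is prime.

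Once $U$ is fixed as a prime filter with $U R_{C} G$, I would repeat the argument symmetrically on the second coordinate: consider $\mathcal{M}_{2} = \{G' : G' \text{ is a filter}, G \subseteq G', U R_{C} G'\}$, take a Zorn-maximal $V \in \mathcal{M}_{2}$, and prove $V$ is prime by the dual of the argument above (using (C4) to switch coordinates). This yields prime filters $U \supseteq F$ and $V \supseteq G$ with $U R_{C} V$, completing (1). The main obstacle is the primality verification, which depends crucially on the interaction of axioms (C2), (C3), (C4) with the algebraic construction $U \oplus [a)$ and the extraction afforded by Facts \ref{facts}(4); the rest is bookkeeping in Zorn's lemma.
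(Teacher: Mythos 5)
Your proof is correct and takes essentially the same route as the standard argument in the sources \cite{DuV,dv,dmvw2} that the paper cites for these Facts (which it states without proof): a two-stage Zorn extension to prime filters, with primality extracted from the decomposition of $U\oplus[a)$ via Facts \ref{facts} together with axioms (C2), (C3') and (C4). The only detail worth adding explicitly is that the Zorn-maximal filter $U$ is automatically proper: since $1\in G$, having $0\in U$ would force $0C1$, contradicting (C1).
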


In the following lemma we list some facts about clans (see, for instance, \cite{dv,dmvw2}).

\begin{facts}\label{facts for clans}

\begin{enumerate}

\item  Every   prime filter is a clan.

\item  The complement of every clan is an ideal.

\item  If $\Gamma$ is a clan and $F$ is a filter such that $F\subseteq \Gamma$, then there is a prime filter $U$ such that $F\subseteq U\subseteq\Gamma$. In particular, if $a\in \Gamma$, then there exists a prime filter $U$ such that $a\in U\subseteq\Gamma$.

\item Every clan $\Gamma$ is the union of all prime filters contained in $\Gamma$.

\item Every clan is contained in a maximal clan.

\item Let $\Sigma$ be a  nonempty set of prime filters such that for every $U,V\in \Sigma$ we have $UR_{C}V$ and let $\Gamma$ be the union of the elements of $\Sigma$. Then $\Gamma$ is a clan and every clan can be obtained in this way.

\item Let $U,V$ be prime filters, $\Gamma$ be a clan  and $U,V\subseteq \Gamma$,. Then $UR_{C}V$ and $UR^{c}V$.
\end{enumerate}

\end{facts}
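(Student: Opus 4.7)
The plan is to prove the seven clauses sequentially, since each later fact leans on earlier ones. Throughout, the distributive-lattice background results (Facts~\ref{facts}, Lemma~\ref{feL}, Lemma~\ref{strong extension lemma}) and the identification $R_{C}=R^{c}$ from Lemma~\ref{new canonical relation1}(ii) will do essentially all the heavy lifting; the rest is checking clauses of the clan axioms. No new axiom of EDC-lattice beyond (C4), (C5) and the U-rich axioms will be invoked.

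For clause~1, I check the four clan axioms for a prime filter $U$: axioms (Clan~1)--(Clan~3) restate exactly what it means to be a proper prime filter, and (Clan~4) follows because $a,b\in U$ implies $a\cdot b\in U$, hence $a\cdot b\neq 0$, so $aCb$ by (C5). Clause~2 is a direct dualisation: $0\notin\Gamma$ gives $0\in\overline{\Gamma}$, downward closure of $\overline{\Gamma}$ is contrapositive to (Clan~2), and closure under $+$ is contrapositive to (Clan~3). Clause~3 is the key place where the Separation/Filter-extension machinery enters. Given $F\subseteq\Gamma$, clause~2 tells us that $I=\overline{\Gamma}$ is an ideal disjoint from $F$; applying the Filter-extension Lemma (Lemma~\ref{feL}(1)) to the pair $(F,I)$ yields a prime filter $U$ with $F\subseteq U$ and $U\cap\overline{\Gamma}=\varnothing$, i.e.\ $U\subseteq\Gamma$. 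Clause~4 is an immediate corollary: for each $a\in\Gamma$, apply clause~3 to $F=[a)$ to obtain a prime filter containing $a$ and contained in $\Gamma$.

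Clause~5 is a routine Zorn argument on the poset of clans extending $\Gamma$, ordered by inclusion; the only thing to verify is that the union of a chain of clans is again a clan, and all four clan axioms (in particular (Clan~3) and (Clan~4)) are preserved under arbitrary directed unions because their hypotheses involve only finitely many elements. Clause~6 is then the structural heart of the lemma. For the ``if'' direction, I verify (Clan~1)--(Clan~3) for $\Gamma=\bigcup\Sigma$ using the fact that each element of $\Sigma$ is a prime filter, and (Clan~4) using that any two elements $a,b\in\Gamma$ lie in some $U,V\in\Sigma$ respectively, with $UR_{C}V$ by hypothesis, giving $aCb$ by the definition of $R_{C}$. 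For the ``only if'' direction, given a clan $\Gamma$, take $\Sigma$ to be the set of all prime filters contained in $\Gamma$: it is nonempty (clause~3 applied to $F=[1)$), its union is $\Gamma$ by clause~4, and its elements are pairwise $R_{C}$-related by clause~7, which we prove next.

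Clause~7 is essentially a restatement of the definition of $R_{C}$: if $U,V\subseteq\Gamma$ and $a\in U$, $b\in V$, then $a,b\in\Gamma$, so $aCb$ by (Clan~4); since this holds for every such $a,b$, we get $UR_{C}V$. The assertion $UR^{c}V$ then follows immediately from Lemma~\ref{new canonical relation1}(ii), which equates $R_{C}$ with $R^{c}$ under the U-rich hypothesis standing throughout this section. The main obstacle is not in any single clause but in the logical ordering: clause~6 depends on clause~7 for its ``only if'' half, while clause~7 itself is a one-line consequence of (Clan~4), so the dependency is clean provided one proves the clauses in the order 1, 2, 3, 4, 5, 7, 6 rather than strictly top-to-bottom.
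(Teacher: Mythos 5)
Your proof is correct, and since the paper states these facts without proof (merely citing \cite{dv,dmvw2}), your arguments supply exactly the standard verifications the authors intend: clan axioms checked directly, the Filter-extension Lemma for clause~3, Zorn for clause~5, and Lemma~\ref{new canonical relation1}(ii) under the section's standing U-rich hypothesis for the $R^{c}$ part of clause~7. The reordering 1,\,2,\,3,\,4,\,5,\,7,\,6 that you point out is the right way to keep the dependencies acyclic.
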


\begin{lemma}\label{special property of clans} Let $\Gamma$ be a clan and $a\in D$. Then the following two conditions are equivalent:

(i) $(\forall c\in D)(a+c=1\rightarrow c\in \Gamma)$,

(ii) There exists a prime filter $U\subseteq \Gamma$ such that $a\not\in U$.

\end{lemma}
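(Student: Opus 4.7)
The plan is to prove the two implications separately, with the nontrivial direction being (i) $\Rightarrow$ (ii), which will be handled by a standard filter-extension argument once the right ideal is identified.

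For (ii) $\Rightarrow$ (i) the argument is immediate. Suppose $U \subseteq \Gamma$ is a prime filter with $a \notin U$, and let $c \in D$ satisfy $a+c=1$. Then $a+c = 1 \in U$, and since $U$ is prime, either $a \in U$ or $c \in U$; as $a \notin U$, we must have $c \in U \subseteq \Gamma$, so $c \in \Gamma$. No axiom of EDC-lattices is needed here — purely the primeness of $U$ and the inclusion $U \subseteq \Gamma$.

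For (i) $\Rightarrow$ (ii) I would exploit the fact from the preceding list that the complement $I_0 := D \setminus \Gamma$ of a clan is an ideal. Form the ideal $J := I_0 \oplus (a\,]$, which by Facts \ref{facts} equals $\{y+z : y \in I_0,\ z \leq a\}$. The key step is to verify that $1 \notin J$: otherwise $1 = y + z$ for some $y \in I_0$ and $z \leq a$, whence $y + a \geq y + z = 1$, so $y + a = 1$; but then hypothesis (i) forces $y \in \Gamma$, contradicting $y \in I_0$.

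Having established $\{1\} \cap J = \varnothing$, I would apply the Filter-extension Lemma (Lemma \ref{feL}.\ref{filter-extension Lemma}) to the trivial filter $[1) = \{1\}$ and the ideal $J$ to obtain a prime filter $U$ such that $U \cap J = \varnothing$. Since $I_0 \subseteq J$ this gives $U \cap I_0 = \varnothing$, i.e.\ $U \subseteq \Gamma$; and since $a \in (a\,] \subseteq J$, we get $a \notin U$, which is precisely (ii). I do not expect any real obstacle: the only non-routine ingredient is spotting that the ideal $I_0 \oplus (a\,]$ is exactly the object needed to convert the universal condition (i) into disjointness from $\{1\}$, and this is a familiar move in Stone-style arguments.
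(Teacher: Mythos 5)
Your proof is correct and follows essentially the same route as the paper: both directions use the same ideas, and in (i)$\Rightarrow$(ii) you construct exactly the ideal $\overline{\Gamma}\oplus(a]$ and apply the Filter-extension Lemma. The only (inessential) difference is that the paper seeds the extension with the filter $F=\{c: a+c=1\}$ and checks $F\cap(\overline{\Gamma}\oplus(a])=\varnothing$, whereas you start from the trivial filter $\{1\}$ and check only that $1\notin\overline{\Gamma}\oplus(a]$, which suffices and spares you the verification that $F$ is a filter.
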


\begin{proof} (i)$\rightarrow$ (ii). Suppose that (i) holds. It is easy to see that the set $F=\{c: a+c=1\}$ is a filter. The complement $\overline{\Gamma}$ of $\Gamma$ is an ideal (Facts \ref{facts for clans}) and hence $\overline{\Gamma}\oplus(a]$ is an ideal. We will show that $F\cap \overline{\Gamma}\oplus(a]=\varnothing$. Suppose the contrary. Then there is a $c$ such that $a+c=1$ (and hence by (i) $c\in \Gamma$) and $c\in \overline{\Gamma}\oplus(a]$. Then there is $x\in \overline{\Gamma}$ such that $c\leq x+a$. From here we get: $1=a+c\leq a+x+a=x+a$, hence $x+a=1$ and by (i) - $x\in \Gamma$, contrary to $x\in \overline{\Gamma}$. Now we can apply Filter-extension Lemma and obtain a prime filter $U$ extending $F$ such that $U\cap \overline{\Gamma}\oplus(a]=\varnothing$. It follows from here that $a\not\in U$, $U\cap \overline{\Gamma}=\varnothing$ which implies $U\subseteq \Gamma$.

(ii)$\rightarrow$(i). Suppose (ii) holds: $U\subseteq\Gamma$ and $a\not\in U$. Suppose $a+c=1$. Then $c\in U\subseteq\Gamma$, so $c\in \Gamma$ - (i) is fulfilled. $\square$

\end{proof}

\bigskip
{\bf Defining the canonical topological space $X(D)$ of $\underline{D}$ and the canonical  embedding $h$.}

\bigskip

Define the Stone like embedding: $h(a)=\{\Gamma\in CLAN(D): a\in \Gamma\}$ and consider   the set $\mathbf{CB}(X)=\{h(a): a\in D\}$ as a closed base of the topology in $X(D)=CLAN(D)$.

\begin{lemma}\label{T0embedding1} The space $X(D)$ is
semiregular  and $h$ is a dually dense  embedding of $\underline{D}$ into the
contact Boolean algebra $RC(X(D))$.
\end{lemma}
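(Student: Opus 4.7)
The plan is to reduce the lemma directly to Corollary \ref{ext:semireg1}, which already packages the heavy lifting: it says that once we have an upper semi-lattice embedding into closed sets whose image is a closed base, U-extensionality alone forces the image to consist of regular closed sets, the space to be semiregular, and the embedding to be dually dense into $RC(X)$ (with meets recovered via $Cl\circ Int$). Since being U-rich includes the axiom (Ext $\widehat{O}$), the U-extensionality hypothesis of the corollary is free. So the whole task reduces to checking: (a) $h$ is an embedding of the upper semi-lattice $(D,0,1,+)$ into $C(X(D))$, and (b) $\{h(a):a\in D\}$ really is a closed basis for the declared topology on $X(D)$.

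For (a), I would verify the required algebraic properties directly from the clan axioms. Clan axiom (Clan 1) gives $h(0)=\varnothing$ and $h(1)=X(D)$. For $h(a+b)=h(a)\cup h(b)$, the inclusion $\supseteq$ follows from upward closure (Clan 2) since $a,b\le a+b$, and $\subseteq$ is exactly (Clan 3). Injectivity and order-reflection are handled together: if $a\not\le b$, apply the Filter-extension Lemma \ref{feL} to $[a)$ and $(b]$ to produce a prime filter $U$ with $a\in U$ and $b\notin U$; by Facts \ref{facts for clans}(1) every prime filter is a clan, so $U\in X(D)$ and $U\in h(a)\setminus h(b)$. Thus $a\le b\iff h(a)\subseteq h(b)$, which also gives injectivity.

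For (b), the topology on $X(D)$ is defined by taking $\mathbf{CB}(X(D))=\{h(a):a\in D\}$ as the generating family of closed sets; to be a closed basis in the sense required by the corollary it suffices that this family be closed under finite unions and contain the extreme sets. Both hold by (a): $h(0)=\varnothing$, $h(1)=X(D)$, and $h(a)\cup h(b)=h(a+b)$. The collection of arbitrary intersections of elements of $\mathbf{CB}(X(D))$ is then closed under arbitrary intersections and, by distributivity, under finite unions, so it coincides with the closed sets of a topology and $\mathbf{CB}(X(D))$ is indeed a closed base.

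With (a), (b) and U-extensionality in hand, Corollary \ref{ext:semireg1} immediately yields $\mathbf{CB}(X(D))\subseteq RC(X(D))$, semiregularity of $X(D)$, the formula $h(a\cdot b)=Cl(Int(h(a)\cap h(b)))$ (i.e.\ $h$ preserves the meet of $RC(X(D))$), and the dual density of $h(D)$ in $RC(X(D))$. There is no real obstacle beyond bookkeeping; the only conceptually delicate point is that clans are not closed under meets, so $h$ does not a priori preserve $\cdot$ as set-intersection. This is precisely what U-extensionality (supplied by the U-rich hypothesis) repairs via the $Cl\circ Int$ correction, which is the whole content of Corollary \ref{ext:semireg1}.
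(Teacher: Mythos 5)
Your treatment of the lattice-theoretic part is correct and follows essentially the same route as the paper: verify that $h$ is an embedding of the upper semi-lattice $(D,0,1,+)$ into the closed sets of $X(D)$, that $\{h(a):a\in D\}$ is a closed base, and then invoke Corollary \ref{ext:semireg1} together with U-extensionality (available since $\underline{D}$ is U-rich) to get semiregularity, $\mathbf{CB}(X(D))\subseteq RC(X(D))$, and dual density. Your use of the Filter-extension Lemma for order-reflection is fine, and your remark about $h$ recovering meets only via $Cl\circ Int$ is the right observation.

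However, there is a genuine gap: the lemma asserts that $h$ is an embedding of $\underline{D}=(D,C,\widehat{C},\ll)$ into the \emph{contact} Boolean algebra $RC(X(D))$, so $h$ must also preserve and reflect the three mereotopological relations, i.e.\ one must prove $aCb$ iff $h(a)\cap h(b)\not=\varnothing$, $a\not\ll b$ iff $h(a)\cap Cl(-h(b))\not=\varnothing$, and $a\widehat{C}b$ iff $Cl(-h(a))\cap Cl(-h(b))\not=\varnothing$. Your proposal never addresses this, and it is where most of the work in the paper's proof lies (Claim \ref{preservation 1}). That part requires: the characterization of $\Gamma\in Cl(-h(a))$ as ``$(\forall c)(a+c=1\rightarrow c\in\Gamma)$'', equivalently the existence of a prime filter $U$ with $a\not\in U\subseteq\Gamma$ (Lemma \ref{special property of clans}); the hard relational Lemmas \ref{C} and \ref{dual C} producing prime filters $U,V$ with $UR^{c}V$ witnessing $aCb$, $a\not\ll b$, $a\widehat{C}b$; the fact that $U\cup V$ is then a clan, giving the forward directions; and Facts \ref{facts for clans}(7) for the converse directions. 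None of this follows from Corollary \ref{ext:semireg1}, so the reduction you propose does not by itself establish the lemma.
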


\begin{proof}  Using the
properties of clans, one can   easily  check  that
$h(0)=\varnothing$, $h(1)=X$, and that $h(a+b)=h(a)\cup h(b)$. This
shows that the set $\mathbf{CB}(X(D))=\{ h(a): a\in D\}$ is closed
under finite unions and, in fact, it is  a closed basis for the
topology of $X$. Also we have the implication: $a\leq b$ then $h(a)\subseteq h(b)$.

To show that $h$ is an embedding we use the fact that prime filters are clans and  prove that $a\not\leq b$ implies $h(a)\not\subseteq h(b)$. Indeed, from $a\not\leq b$ it follows by the theory of distributive lattices (see \cite{bd}) that there exists a prime filter U (which is also a clan) such that $a\in U$ (so $U\in h(a)$) and $b\not\in U$ (so, $U\not\in h(b)$), which proves that $h(a)\not\subseteq h(b)$.  Consequently, $h$ is an embedding of the upper semi-lattice $(D,0,1,+)$ into the lattice of closed sets of the space $X(D)$. By Corollary \ref{ext:semireg1}, $X(D)$ is a
semiregular space and $h$ is a dually dense embedding of $D$ into
the Boolean algebra $RC(X)$. It remains to show that $h$ preserves the relations $C, \widehat{C}$ and $\ll$. This follows from the following   claim.

\begin{claim}\label{preservation 1} (i) Let $\Gamma$ be a clan and $a\in D$.  Then following equivalence holds:

 $\Gamma\in h(a)$ iff there exists a prime filter $U$ such that $a\in U\subseteq \Gamma$.

(ii) Let $\Gamma$ be a clan and $a\in D$.  Then following conditions are  equivalent:

\begin{quote}
(I) $(\forall c\in D)(a+c=1\rightarrow c\in \Gamma)$,

(II)  $\Gamma\in Cl(-h(a))$,

(III)  There exists a prime filter $U$ such that $a\not\in U\subseteq \Gamma$.
\end{quote}

(iii) $aCb$ iff $h(a)\cap h(b)\not=\varnothing$,

(iv) $a\not\ll b$ iff $h(a)\cap Cl(-h(b))\not=\varnothing$.

(v) $a\widehat{C}b$ iff $Cl(-h(a))\cap Cl(-h(b))\not=\varnothing$,

\end{claim}

{\bf Proof of the claim.} (i) follows easily from Facts \ref{facts for clans} (3.).

(ii) The proof of $(I)\leftrightarrow (II)$ follows  by the following sequence of equivalences:

\smallskip
\begin{quote}
$(\forall c\in D)(a+c=1\rightarrow c\in \Gamma)$ iff

$(\forall c\in D)(h(a)\cup h(c)=X(D)\rightarrow \Gamma\in h(c))$ iff

$(\forall c\in D)(-h(a)\subseteq h(c)\rightarrow \Gamma\in h(c))$ iff

$\Gamma\in Cl(-h(a))$

\end{quote}

The first equivalence holds because $h$ is an embedding of the upper semi-lattice  $(D,0,1,+)$ into the lattice of closed sets of the space $X(D)$, the third equivalence uses the fact that the set $\{h(c):c\in D\}$ is a closed base of the topology of $X(D)$.

The equivalence $(I)\leftrightarrow(III)$ is just the Lemma \ref{special property of clans}.
\smallskip

(iii) ($\Rightarrow$ ) Suppose $aCb$, then by Lemma \ref{C} (i) there exist prime filters $U$, and $V$ such that $UR^{c}V$, $a\in U$ and $b\in V$. Let $\Gamma=U\cup V$. By Facts \ref{facts for clans}  $\Gamma$ is a clan, obviously containing $a$ and $b$, which implies $h(a)\cap h(b)\not=\varnothing$.

\smallskip

($\Leftarrow$) Suppose $h(a)\cap h(b)\not=\varnothing$. Then there exists a clan $\Gamma$ containing $a$ and $b$,  hence $aCb$.

(iv) ($\Rightarrow$ ) Suppose  $a\not\ll b$. Then by Lemma \ref{C} (ii) there exist prime filters $U,V$ such that $UR^{c}V$, $a\in U$ and $b\not\in V$. Let $\Gamma=U\cup V$, then $\Gamma$ is a clan containing $U$ and $V$. So, $a\in \Gamma$ and hence $\Gamma\in h(a)$. From the condition $b\not\in V\subseteq \Gamma$ we obtain by (ii) that $\Gamma\in Cl(-h(b))$ and hence $h(a)\cap Cl(-h(b))\not=\varnothing$.

$(\Leftarrow$) Suppose $h(a)\cap Cl(-h(b))\not=\varnothing$. Then there exists a clan $\Gamma\in h(a)$ and $\Gamma\in Cl(-h(b))$. It follows by (i) that there exists a prime filter $U$ such that $a\in U\subseteq\Gamma$ and by (ii) we obtain that there exists a prime filter $V$ such that $b\not\in V\subseteq \Gamma$. Condition   $U,V\subseteq\Gamma$ implies by Facts \ref{facts for clans} (7.) that $UR^{c}V$. Using the properties of the relation $R^{c}$ and $a\in U$ and $b\not\in V$ we get $a\not \ll b$.

(v) The proof of (v) is similar to the proof of (iv) with the use of Lemma \ref{dual C}.
This finishes the proof of Lemma \ref{T0embedding1} $\square$
\end{proof}

\begin{lemma}\label{$T_0$compact1}

The following conditions are true for the canonical space  $X(D)$:

 (i) $X(D)$ is $T_0$.

 (ii) $X(D)$ is compact.
\end{lemma}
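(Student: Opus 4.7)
The plan is to treat the two statements separately. For (i), the $T_0$ property, take two distinct clans $\Gamma_1 \neq \Gamma_2$ in $X(D)$. Without loss of generality there is $a \in \Gamma_1 \setminus \Gamma_2$, so $\Gamma_1 \in h(a)$ while $\Gamma_2 \in X(D) \setminus h(a)$; since $h(a)$ is basic closed, its complement is an open set containing $\Gamma_2$ but not $\Gamma_1$. This is immediate and requires no additional hypotheses on $\underline{D}$.

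For (ii), I would invoke the closed-set form of Alexander's subbase theorem applied to the closed base $\{h(a) : a \in D\}$: it suffices to show that every family $\{h(a_i) : i \in I\}$ with the finite intersection property has non-empty intersection. Unfolding the definition of $h$, the FIP hypothesis asserts that for every finite $J \subseteq I$ there exists a clan $\Gamma_J$ with $\{a_i : i \in J\} \subseteq \Gamma_J$, and the goal becomes to produce a single clan $\Gamma$ containing all the $a_i$ at once.

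I would construct $\Gamma$ by an ultrafilter argument. Let $\mathcal{D}$ denote the set of finite subsets of $I$, and for each $i \in I$ put $B_i = \{J \in \mathcal{D} : i \in J\}$. The family $\{B_i : i \in I\}$ has the finite intersection property in $\mathcal{D}$ since $\{i_1,\ldots,i_n\} \in B_{i_1} \cap \cdots \cap B_{i_n}$, so by Zorn's lemma there is an ultrafilter $\mathcal{F}$ on $\mathcal{D}$ containing every $B_i$. Using the axiom of choice, select a clan $\Gamma_J$ from the non-empty set $\bigcap_{i \in J} h(a_i)$ for each $J \in \mathcal{D}$, and define
\[
\Gamma = \{a \in D : \{J \in \mathcal{D} : a \in \Gamma_J\} \in \mathcal{F}\}.
\]

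The main task is verifying that $\Gamma$ is a clan containing every $a_i$. Conditions (Clan 1) and (Clan 2) transfer immediately from the $\Gamma_J$ using only that $\mathcal{F}$ is a proper filter. Condition (Clan 3) is where the ultrafilter property (primeness) is essential: if $a+b \in \Gamma$, then $\{J : a+b \in \Gamma_J\} \in \mathcal{F}$, and since each such $\Gamma_J$ satisfies (Clan 3), this set is contained in $\{J : a \in \Gamma_J\} \cup \{J : b \in \Gamma_J\}$, so primeness of $\mathcal{F}$ puts one of the two sets in $\mathcal{F}$. For (Clan 4), if $a, b \in \Gamma$ then the two corresponding sets both belong to $\mathcal{F}$, hence their intersection is non-empty, yielding some $J$ with $a, b \in \Gamma_J$, whence $aCb$ from (Clan 4) for $\Gamma_J$. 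Finally, $\{J : a_i \in \Gamma_J\} \supseteq B_i \in \mathcal{F}$ ensures $a_i \in \Gamma$ for all $i \in I$. I do not anticipate a serious obstacle here: the clan axioms are all of ``finitary'' nature and transfer along the ultrafilter in the standard way, so the main care is in the bookkeeping and in justifying that Alexander's theorem applies to the basic closed sets (which is legitimate, since a closed base is a fortiori a closed subbase).
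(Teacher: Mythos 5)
Your proof is correct. Note first that the paper itself gives no argument here: it simply states that the proof is the same as that of Lemma 19 of the cited paper on distributive contact lattices, so your write-up actually supplies a self-contained argument where the paper defers to a reference. Part (i) is the standard $T_0$ argument and is exactly what one expects. For part (ii), your reduction to families of basic closed sets with the finite intersection property is legitimate, though invoking Alexander's subbase lemma is more than you need: since $h(a)\cup h(b)=h(a+b)$, the family $\{h(a):a\in D\}$ is a closed base closed under finite unions, and for a closed base the FIP criterion for compactness is elementary (every closed set is an intersection of basic ones, and the resulting enlarged family inherits the FIP and has the same intersection). The heart of your argument, the ultrafilter limit $\Gamma=\{a\in D:\{J:a\in\Gamma_J\}\in\mathcal{F}\}$ of the clans $\Gamma_J$, is a clean and correct way to produce the required clan: conditions (Clan 1), (Clan 2) and the membership of each $a_i$ use only that $\mathcal{F}$ is a filter containing each $B_i$, (Clan 3) uses primeness of the ultrafilter exactly as you say, and (Clan 4) uses that two members of $\mathcal{F}$ meet, so that $aCb$ is witnessed by a single $\Gamma_J$. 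This is a standard compactness-via-ultraproduct argument and, whether or not it coincides with the proof in the cited source, it establishes the lemma completely.
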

\begin{proof} The proof is the same as the proof of Lemma 19 from \cite{dmvw2}.$\square$

\end{proof}

\begin{lemma}\label{C-separability} The mapping $h$ is a C-separable embedding
of $D$ into $RC(X(D))$.
\end{lemma}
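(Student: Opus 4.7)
The plan is to reduce all three clauses of C-separability to a single \emph{separation lemma}: if $A \in RC(X(D))$ and $B$ is any closed subset of $X(D)$ with $A \cap B = \varnothing$, then there exists $c \in D$ such that $A \subseteq h(c)$ and $h(c) \cap B = \varnothing$. Granting this, each of the three clauses follows by applying the lemma twice, after rewriting its hypothesis in purely set-theoretic form using the topological readings of the relations in $RC(X(D))$ together with Claim~\ref{preservation 1}: $a_2\overline{C}b_2$ becomes $a_2 \cap b_2 = \varnothing$; $a_2\overline{\widehat{C}}b_2$ becomes $Cl(-a_2) \cap Cl(-b_2) = \varnothing$; and $a_2 \ll b_2$ becomes $a_2 \cap Cl(-b_2) = \varnothing$. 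Whenever $u \in RC(X(D))$, $Cl(-u)$ is again regular closed, since $-u = Int(Cl(-u))$ is regular open, so in all three cases both sides of the disjointness are regular closed and suitable as the set $A$ of the separation lemma.

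For the separation lemma itself, I would argue as follows. For each $\Delta \in B$, since $\Delta \notin A$ and $\{h(c):c\in D\}$ is a closed base for $X(D)$, there exists $c_\Delta \in D$ with $A \subseteq h(c_\Delta)$ and $\Delta \in -h(c_\Delta)$. The basic opens $\{-h(c_\Delta)\}_{\Delta \in B}$ cover $B$; since $X(D)$ is compact by Lemma~\ref{$T_0$compact1} and $B$ is closed in $X(D)$, finitely many suffice, giving $B \cap h(c_{\Delta_1}) \cap \cdots \cap h(c_{\Delta_m}) = \varnothing$. I then put $c := c_{\Delta_1} \cdot c_{\Delta_2} \cdots c_{\Delta_m}$ in $D$. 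Since $c \leq c_{\Delta_i}$ for each $i$, we have $h(c) \subseteq \bigcap_i h(c_{\Delta_i})$, so $h(c) \cap B = \varnothing$, and it remains only to verify $A \subseteq h(c)$.

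This last inclusion is the delicate step and the point at which regular closedness of $A$ is essential; it is also the main obstacle of the argument. Indeed, $\{h(c):c\in D\}$ is closed under finite unions but not in general under finite intersections, so the intersection $\bigcap_i h(c_{\Delta_i})$ produced by compactness need not itself be of the form $h(c)$. By Corollary~\ref{ext:semireg1}(1)(c), however, $h(a\cdot b) = Cl(Int(h(a)\cap h(b)))$, and iterating gives $h(c) = Cl(Int(\bigcap_i h(c_{\Delta_i})))$. Since $A \subseteq h(c_{\Delta_i})$ for all $i$, we have $Int(A) \subseteq Int(\bigcap_i h(c_{\Delta_i}))$, and regular closedness $A = Cl(Int(A))$ then yields $A \subseteq Cl(Int(\bigcap_i h(c_{\Delta_i}))) = h(c)$, completing the separation lemma.

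Finally, the reductions proceed as follows. For (C-separability for $C$), apply the lemma first with $A:=a_2$, $B:=b_2$ to obtain $a_1 \in D$ with $a_2 \subseteq h(a_1)$ and $h(a_1) \cap b_2 = \varnothing$, and then with $A:=b_2$, $B:=h(a_1)$ to obtain $b_1$. For (C-separability for $\widehat{C}$), apply it first with $A:=Cl(-a_2)$, $B:=Cl(-b_2)$ to obtain $a_1$ with $-a_2 \subseteq h(a_1)$ (hence $a_2 + a_1 = 1$ in $RC(X(D))$) and $h(a_1)\cap Cl(-b_2) = \varnothing$, and then with $A:=Cl(-b_2)$, $B:=h(a_1)$ to obtain $b_1$. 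For (C-separability for $\ll$), combine the two flavours, first with $A:=a_2$, $B:=Cl(-b_2)$ and then with $A:=Cl(-b_2)$, $B:=h(a_1)$. In every case the outputs $a_1,b_1 \in h(D)$ satisfy the prescribed inequalities together with $h(a_1)\cap h(b_1)=\varnothing$, which by Claim~\ref{preservation 1}(iii) is exactly $a_1 \overline{C} b_1$, as demanded by Definition~\ref{C-separability1}.
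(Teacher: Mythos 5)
Your proof is correct and follows essentially the same route as the paper's: a compactness argument on the canonical space using the closed base $\{h(c):c\in D\}$, with the regular closedness of the set to be covered and the identity $h(a\cdot b)=Cl(Int(h(a)\cap h(b)))$ supplying the inclusion $\alpha\subseteq h(a)$. The only difference is organizational: you factor everything through a single separation lemma (one regular closed set against one closed set) applied twice per clause, whereas the paper runs the compactness argument on both sets simultaneously, proves only the clause for $C$ in detail, and declares the other two similar.
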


\begin{proof} This lemma was proved in \cite{dmvw2}  by a special construction. Since the definition of C-separability for EDC-lattices uses an extended definition for which the special construction from \cite{dmvw2} does not hold, in this paper we give a new proof deducing  the statement from the compactness of the space $X(D)$.

We have to proof the following three statements, corresponding to the three clauses of the condition of C-separability (see Definition \ref{C-separability1}).

\smallskip

\noindent (C-separability for C) $(\forall \alpha, \beta\in RC(X(D)))(\alpha\cap\beta=\varnothing\rightarrow(\exists a,b\in D)(\alpha\subseteq h(a), \beta\subseteq h(b), h(a)\cap h(b))=\varnothing$.

\smallskip

\noindent (C-separability for $\widehat{C}$)    $(\forall \alpha, \beta \in RC(X(D))( Cl(-\alpha)\cap Cl(-\beta)=\varnothing \rightarrow (\exists a, b \in D)(\alpha\cup h(a)= X(D), \beta \cup h(b)=X(D), h(a)\cap h(b)=\varnothing)$.

\smallskip

\noindent (C-separability for $\ll$) $(\forall \alpha, \beta \in RC(X(D))(\alpha\cap Cl(-\beta)=\varnothing\rightarrow (\exists a, b\in D)(\alpha\subseteq h(a), \beta\cup h(b)=X(D), h(a)\cap h(b)=\varnothing)$.

As an example we shall prove  the condition (C-separability for C). The proofs for the other two conditions are similar.

\smallskip

\textbf{Proof of (C-separability for C)}. Let $\alpha, \beta\in RC(X(D))$   and $\alpha\cap\beta=\varnothing$.  Since $\alpha$ and $\beta$ are closed sets they can be represented as intersections from the elements of the basis $\mathbf{CB}(X(D))=\{ h(c): c\in D\}$ of $X(D)$. So there are subsets $A, B\subseteq\mathbf{CB}(X(D))$ such that $\alpha=\bigcap\{h(c): h(c)\in A\}$ and $\beta=\bigcap\{h(c): h(c)\in B\}$. Then $\alpha\cap\beta=\bigcap\{h(c): h(c)\in A\}\cap \bigcap\{h(c): h(c)\in B\}=\varnothing$. By the compactness of $X(D)$ (Lemma \ref{$T_0$compact1} (ii)), there are finite subsets $A_{0}\subseteq A$ and $B_{0}\subseteq B$ such that $\alpha\cap\beta=\bigcap\{h(c): h(c)\in A_{0}\}\cap \bigcap\{h(c): h(c)\in B_{0}\}=\varnothing$. Let $A_{0}=\{h(c_{1}),..., h(c_{n})\}$ and
$B_{0}=\{h(d_{1}),..., h(d_{m})\}$ and let $a=c_{1}\cdot...\cdot c_{n}$ and $b=d_{1}\cdot...\cdot d_{m}$. Then $h(a)\subseteq h(c_{i})$, $i=1...n$ and from here we get $h(a)\subseteq h(c_{1})\cap...\cap h(c_{n})$. Analogously we obtain that $h(b)\subseteq h(d_{1})\cap...\cap h(d_{m})$. Consequently $h(a)\cap h(b)\subseteq (h(c_{1})\cap...\cap h(c_{n})\cap(h(d_{1})\cap...\cap h(d_{m}))=\varnothing$, so  $h(a)\cap h(b)=\varnothing$. Also we have $\alpha\subseteq h(c)$ for all $h(c)\in A$ and consequently for all $h(c)\in A_{0}$. Hence $\alpha\subseteq h(c_{1})\cdot...\cdot h(c_{n})=h(c_{1}\cdot...\cdot c_{n})=h(a)$, so $\alpha\subseteq h(a)$. Analogously we get $\beta\subseteq h(b)$. $\square$
\end{proof}

The following theorem is the main result of this section.

\begin{theorem}\label{rep:ext1} {\bf Topological representation theorem for
$U$-rich EDC-lattices}\\ Let
$\underline{D}=(D, C, \widehat{C}, \ll) $ be an $U$-rich EDC-lattice. Then there exists a compact semiregular $T_0$-space $X$ and
a dually dense and $C$-separable embedding $h$ of $\underline{D}$ into the
Boolean contact algebra $RC(X)$ of the regular closed sets of $X$.
Moreover:

(i) $\underline{D}$ satisfies (Ext C) iff  $RC(X)$ satisfies (Ext C); in
this case $X$ is weakly  regular.

(ii) $\underline{D}$ satisfies (Con C) iff  $RC(X)$ satisfies (Con C); in this
case $X$ is connected.

(iii) $\underline{D}$ satisfies (Nor 1) iff  $RC(X)$ satisfies (Nor 1); in this
case $X$ is $\kappa$-normal.
\end{theorem}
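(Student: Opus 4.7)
\textbf{(Proof proposal.)}
The plan is to assemble the theorem from the machinery developed in the preceding subsection together with the transfer lemmas from Section~\ref{embedding properties}, so essentially no new computation should be required.

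First, I would take $X := X(D) = CLAN(D)$ with the topology generated by the closed base $\mathbf{CB}(X)=\{h(a):a\in D\}$, where $h(a)=\{\Gamma\in CLAN(D):a\in\Gamma\}$ is the canonical Stone-like mapping. The first part of the theorem (existence of a compact semiregular $T_0$ space with a dually dense $C$-separable embedding) is then an immediate combination of the three lemmas already proved in Section~\ref{T0repesentations}: Lemma~\ref{T0embedding1} gives that $X$ is semiregular and that $h$ is a dually dense embedding of $\underline{D}$ into $RC(X)$ preserving $C$, $\widehat{C}$ and $\ll$; Lemma~\ref{$T_0$compact1} gives that $X$ is compact and $T_0$; and Lemma~\ref{C-separability} gives $C$-separability of $h$.

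For the three moreover clauses (i), (ii), (iii), the strategy is the same in each case: first transfer the axiom between $\underline{D}$ and $RC(X)$ via Lemma~\ref{dense}, then translate the axiom on $RC(X)$ into the corresponding topological property of $X$ via Lemma~\ref{topoequivalents}. Concretely, since $h$ embeds $\underline{D}$ as a dually dense and $C$-separable EDC-sublattice of $RC(X)$, clauses (i)--(iii) of Lemma~\ref{dense} yield the equivalences ``$\underline{D}$ satisfies (Ext C) iff $RC(X)$ satisfies (Ext C)'', and analogously for (Con C) and (Nor~1). Then Lemma~\ref{topoequivalents} yields, in the semiregular space $X$: (i) $RC(X)$ satisfies (Ext C) iff $X$ is weakly regular; (ii) $RC(X)$ satisfies (Con C) iff $X$ is connected; (iii) $RC(X)$ satisfies (Nor~1) iff $X$ is $\kappa$-normal.

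I do not anticipate a genuine obstacle, since everything has been factored out into the preceding lemmas; the only point requiring a sentence of care is checking the hypothesis of Lemma~\ref{dense}(i), which requires dual density in addition to $C$-separability, but both have already been secured by Lemmas~\ref{T0embedding1} and~\ref{C-separability}. The mild bookkeeping step is simply to verify that $h(\underline{D})$ sits inside $RC(X)$ (not merely inside the lattice of closed sets), which is part of Lemma~\ref{T0embedding1} via Corollary~\ref{ext:semireg1} and the $U$-extensionality of $\underline{D}$ guaranteed by the $U$-richness hypothesis. $\square$
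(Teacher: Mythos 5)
Your proposal is correct and follows essentially the same route as the paper, which also takes $X=X(D)$ with the canonical embedding and derives the theorem as a corollary of Lemma \ref{T0embedding1}, Lemma \ref{$T_0$compact1}, Lemma \ref{C-separability} and Lemma \ref{topoequivalents}. Your explicit invocation of Lemma \ref{dense} for transferring the axioms (Ext C), (Con C) and (Nor 1) between $\underline{D}$ and $RC(X)$ is a point the paper's one-line proof leaves implicit, so your write-up is if anything slightly more complete.
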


\begin{proof} Let $X$ be the canonical space $X(D)$ of $\underline{D}$ and $h$ be the
canonical embedding of $\underline{D}$. Then, the theorem is a corollary of
Lemma \ref{T0embedding1}, Lemma \ref{$T_0$compact1}, Lemma
\ref{C-separability} and Lemma \ref{topoequivalents}.$\square$
\end{proof}

Note that Theorem \ref{rep:ext1} generalizes several results from \cite{dv,dw} to the distributive case.

\subsection{Representations in $T_1$  spaces}\label{T1 space}
The aim of this section is to obtain representations of some U-rich EDC-lattices in $T_1$-spaces extending the corresponding results from \cite{dmvw2}. The constructions will be slight modifications of the corresponding constructions from the previous section, so we will  be sketchy.

Let $\underline{D}=(D, C, \widehat{C}, \ll) $ be an $U$-rich EDC-lattice.
In the previous section the abstract points were clans and this
guarantees that the representation space is $T_0$. To obtain representations in $T_{1}$ spaces we assume abstract points to be maximal clans, so for  the canonical space of $\underline{D}$ we put $X(D)=MaxCLAN(D)$ and define the canonical embedding $h$ to be $h(a)=\{\Gamma\in MaxCLAN(D):a\in \Gamma\}$. The topology in $X(D)$ is defined considering the set $\mathbf{CB}(X(D))=\{h(a): a\in D\}$ to be the closed base for the space. Note that in general, without additional axioms we can not prove in this case that $h$ is an embedding. In order to guarantee this we will assume that $\underline{D}$ satisfies additionally the axiom of C-extensionality

\smallskip

(Ext C) $a\not=1\rightarrow(\exists b\not=0)(a\overline{C}b)$.

\smallskip

\noindent Note that in this case, due to U-extensionality (see Section \ref{additional axioms}), the lattice $\underline{D}$ satisfies also the axiom

\smallskip

(EXT C) $a\not\leq b\rightarrow (\exists c)(aCc$ and $b\overline{C}c)$,

\smallskip

\noindent which is essential in the proof that $h$ is an embedding.

\begin{lemma}\label{T1embedding} The space $X(D)$ is a
semiregular  and $h$ is a dually dense  embedding of $D$ into the
contact Boolean algebra $RC(X(D))$.
\end{lemma}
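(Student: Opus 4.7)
The plan is to mirror the proof of Lemma~\ref{T0embedding1}, with maximal clans in place of clans, using (Ext C) (together with U\nobreakdash-extensionality built into U\nobreakdash-richness) to secure injectivity of $h$. First I would check the routine identities $h(0)=\varnothing$, $h(1)=X(D)$, $h(a+b)=h(a)\cup h(b)$ and the monotonicity $a\leq b\Rightarrow h(a)\subseteq h(b)$; these use only the clan axioms and confirm that $\mathbf{CB}(X(D))=\{h(a):a\in D\}$ is closed under finite unions and forms a closed base for the declared topology, realizing $h$ as a homomorphism of the upper semi-lattice $(D,0,1,+)$ into the lattice of closed sets of $X(D)$.

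The key step is injectivity. As noted in the text just before the lemma, (Ext~C) combined with U\nobreakdash-extensionality yields (EXT~C): $a\not\leq b\Rightarrow(\exists c)(aCc\land b\overline{C}c)$. Given such $a,b,c$, I would use Facts~\ref{facts for R_{C}} to produce prime filters $U\ni a$, $V\ni c$ with $UR_{C}V$; by Facts~\ref{facts for clans}(6) the union $U\cup V$ is a clan, and by Facts~\ref{facts for clans}(5) it extends to a maximal clan $\Gamma$. Then $a\in\Gamma$ and $c\in\Gamma$; if $b\in\Gamma$ then clan axiom (Clan~4) would give $bCc$, contradicting $b\overline{C}c$. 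Hence $\Gamma\in h(a)\smallsetminus h(b)$, so $h$ is injective. Once injectivity is in hand, Corollary~\ref{ext:semireg1} applies directly (since U\nobreakdash-richness supplies (Ext~$\widehat{O}$)), yielding that $X(D)$ is semiregular, $h$ maps into $RC(X(D))$, and the embedding is dually dense.

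It remains to verify that $h$ preserves the three mereotopological relations. For $C$, one direction is immediate from (Clan~4); for the other, given $aCb$ I would invoke Lemma~\ref{C}(i) and Lemma~\ref{new canonical relation1}(ii) to get prime filters $U R^{c}V$ with $a\in U$, $b\in V$, form the clan $U\cup V$, and extend it to a maximal clan in $h(a)\cap h(b)$. For $\widehat{C}$ and $\ll$ I would first transport Claim~\ref{preservation 1}(ii) to the maximal\nobreakdash-clan setting: for $\Gamma\in X(D)$, the three conditions ``$(\forall c)(a+c=1\Rightarrow c\in\Gamma)$'', ``$\Gamma\in Cl(-h(a))$'', and ``$\exists U\subseteq\Gamma$ prime filter with $a\notin U$'' are equivalent; the proof is the same sequence of equivalences as in the $T_{0}$ case, relying on Lemma~\ref{special property of clans} (whose statement is about general clans, hence covers maximal ones). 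With this tool the proofs of $a\widehat{C}b\Leftrightarrow Cl(-h(a))\cap Cl(-h(b))\neq\varnothing$ and $a\not\ll b\Leftrightarrow h(a)\cap Cl(-h(b))\neq\varnothing$ proceed exactly as in Lemma~\ref{T0embedding1}, producing the required prime filters by Lemmas~\ref{C} and~\ref{dual C} and then extending $U\cup V$ to a maximal clan.

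The main obstacle I expect is psychological rather than technical: one must be sure that passing from a clan to a maximal extension does not destroy the membership/non\nobreakdash-membership witnesses used to read off $\widehat{C}$ and $\ll$. Membership is preserved trivially (the extension only adds elements), and non\nobreakdash-membership is preserved because the witnesses live in a prime filter $U\subseteq U\cup V\subseteq\Gamma$ and the condition ``$\exists U\subseteq\Gamma$ prime filter with $a\notin U$'' is monotone in $\Gamma$. So the preservation proofs go through verbatim, and no new axiom beyond those already invoked in the $T_{0}$ case is needed for this lemma itself; (Ext~C) enters only to obtain injectivity via (EXT~C).
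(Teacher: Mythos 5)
Your proposal is correct and follows essentially the same route as the paper's proof: mirror the $T_{0}$ argument, use (EXT~C) (obtained from (Ext~C) plus U\nobreakdash-extensionality) to get a witness $c$ with $aCc$ and $b\overline{C}c$, build a clan from prime filters and extend it to a maximal clan excluding $b$ to secure injectivity, then invoke Corollary~\ref{ext:semireg1} and rerun the preservation arguments with the extra step of extending $U\cup V$ to a maximal clan. Your explicit remark that the membership and non\nobreakdash-membership witnesses survive the passage to a maximal extension is exactly the point the paper leaves implicit when it asserts that Claim~\ref{preservation 1} remains true.
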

\begin{proof} The proof is  similar to the proof of Lemma
\ref{T0embedding1}, so we will indicate only the differences. First we show that $h$ is  an embedding of the upper semi-lattice $(D,0,1,+)$ into the lattice of closed sets of the space $X(D)$. The only new thing which we have to show is: If $a\not\leq b$ then $h(a)\not \subseteq h(b)$. To do this suppose  $a\not\leq b$. Then by axiom (EXT C) there exists $c\in D$ such that $aCc$ but $b\overline{C}c$. Condition $aCc$ implies that there exist prime filters $U,V$ such that $UR^{c}V$, $a\in U$ and $c\in V$. Let $\Gamma_{0}=U\cup V$. $\Gamma_{0}$ is a clan and by Facts \ref{facts for clans} it is contained in a maximal clan $\Gamma$. Obviously $a,c\in\Gamma$, so $\Gamma\in h(a)$. But $b\overline{C}c$ implies that $b\not\in \Gamma$ (otherwise we will get $bCc$). Conditions $\Gamma\in h(a)$ and $\Gamma\not\in h(b)$ show that $h(a)\not\subseteq h(b)$. Thus, by Corollary \ref{ext:semireg1} $h$ is a dually dense embedding of $\underline{D}$ into the Boolean algebra $RC(X(D))$. It remains to show that $h$ preserves the relations $C, \widehat{C}$ and $\ll$. The proof is almost the same as in the corresponding proof of Lemma \ref{T0embedding1}. The only new thing is when we construct a certain clan from prime filters satisfying the relation $UR^{c}V$ in the form $U\cup V$, then we extend it into a maximal clan. Note also that Claim \ref{preservation 1} remains true. We demonstrate this by considering only the preservation of $\ll$. We have to show:

\smallskip $a\not\ll b$ iff $h(a)\cap Cl(-h(b)\not=\varnothing$

($\Rightarrow$) Suppose $a\not\ll b$. Then by Lemma \ref{C}  $(\exists U, V\in PF(D))(a\in U$ and $b\not\in V$ and $U R^{c}V)$. Define $\Gamma_{0}=U\cup V$. $\Gamma_{0}$ is a clan containing $U$ and $V$. Extend $\Gamma_{0}$ into a maximal clan $\Gamma$. Then $\Gamma$ contains $a$, so $\Gamma\in h(a)$. We have also that $b\not\in V\subseteq\Gamma$, so by the Claim \ref{preservation 1} $\Gamma\in Cl(-h(b))$.
\smallskip

($\Leftarrow)$ The proof is identical to the corresponding proof from Lemma \ref{T0embedding1}. $\square$
\end{proof}

\begin{lemma}\label{T1compact1} The space $X(D)$ satisfies the following conditions:

(i) $X(D)$ is $T_{1}$,

(ii) $X(D)$ is compact,

(iii) $h$ is C-separable embedding.

\end{lemma}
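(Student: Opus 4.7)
My plan is to handle (i), (ii), (iii) in sequence, with compactness doing the heavy lifting. For (i) I will use maximality directly: if $\Gamma\neq\Delta$ are two maximal clans then neither can be contained in the other (else maximality of the smaller one forces equality), so there exist $a\in\Gamma\setminus\Delta$ and $b\in\Delta\setminus\Gamma$. The open sets $X(D)\setminus h(b)$ and $X(D)\setminus h(a)$ then separate the two points in the $T_1$ sense.

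For (ii), since every closed set of $X(D)$ is an intersection of basic closed sets $h(a)$, the standard base reduction shows that it is enough to verify the finite intersection property (FIP) for families of basic closed sets. Suppose $\{h(a_i)\}_{i\in I}$ has FIP, so every finite subfamily $\{a_{i_1},\ldots,a_{i_n}\}$ lies in some maximal clan. I would apply Zorn's lemma to the collection
\[
\mathcal P=\{S\subseteq D:\{a_i\}_{i\in I}\subseteq S\text{ and every finite subset of }S\text{ lies in some clan}\},
\]
which is non-empty by FIP and closed under unions of chains. Pick a maximal $\mathcal S^*\in\mathcal P$ and prove that $\mathcal S^*$ is itself a maximal clan. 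Clan 1 is immediate; Clan 2 (upward closure) holds because adding $b\geq a\in\mathcal S^*$ preserves membership in $\mathcal P$, so maximality forces $b\in\mathcal S^*$; Clan 4 is immediate too, since any pair in $\mathcal S^*$ is a two-element finite subset lying in some witnessing clan. The delicate axiom is primeness (Clan 3): if $a+b\in\mathcal S^*$ but $a,b\notin\mathcal S^*$, maximality of $\mathcal S^*$ in $\mathcal P$ supplies finite $F_a,F_b\subseteq\mathcal S^*$ such that neither $F_a\cup\{a\}$ nor $F_b\cup\{b\}$ is contained in any clan; but $F_a\cup F_b\cup\{a+b\}$ is a finite subset of $\mathcal S^*$, hence lies in some clan $\Gamma$, and Clan 3 applied to $\Gamma$ forces $a\in\Gamma$ or $b\in\Gamma$, contradicting the choice of $F_a$ or $F_b$. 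Maximality of $\mathcal S^*$ within $\mathcal P$ upgrades to maximality among clans, because any strictly larger clan would again lie in $\mathcal P$. Thus $\mathcal S^*\in\bigcap_i h(a_i)$, establishing compactness.

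For (iii), once $X(D)$ is compact the $C$-separability argument reproduces the compactness-plus-base reduction already used in the proof of Lemma \ref{C-separability}: for each of the three clauses in Definition \ref{C-separability1} I would write the relevant regular closed sets as intersections of basic closed sets $h(c)$, apply compactness to extract a finite sub-intersection, and then collapse the finite collection into a single $h$-image using the lattice meet. No new structural input beyond compactness is needed, so the proof transfers verbatim.

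The principal obstacle I expect is the verification of primeness for $\mathcal S^*$ in step (ii); (i) is a direct maximality argument and (iii) is formal once compactness is in place.
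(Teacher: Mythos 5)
Your proof is correct and follows essentially the same route as the paper's: part (i) rests on exactly the same observation (two distinct maximal clans cannot be comparable, so the basic closed sets $h(a)$ separate them --- the paper phrases this as $Cl(\{\Gamma\})=\{\Gamma\}$, you phrase it as open-set separation, which is equivalent), and part (iii) is, as in the paper, a transfer of the compactness argument of Lemma \ref{C-separability}. For part (ii) the paper only cites the earlier work \cite{dmvw2}; your explicit Zorn's-lemma construction of a maximal clan containing an FIP family of generators is a correct way to supply that omitted argument, and you rightly identify and properly handle the one delicate point, namely the verification of (Clan 3) for the maximal element of $\mathcal{P}$.
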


\begin{proof} (i) Let $\Gamma$ be an arbitrary maximal clan. The space $X(D)$ is $T_{1}$ iff the singleton set  $\{ \Gamma\}$ is closed, i.e. $Cl(\{\Gamma\})=\{\Gamma\}$. This follows  by the maximality of $\Gamma$ as follows. Let $\Delta$ be a maximal clan. Then:

\noindent $\Delta\in Cl(\{\Gamma\})$ iff
$(\forall c\in D)(\{\Gamma\}\subseteq h(c)\rightarrow \Delta\in h(c))$ iff
$(\forall c\in D)(\Gamma\in h(c)\rightarrow \Delta\in h(c))$ iff
$(\forall c\in D)(c\in \Gamma\rightarrow c\in \Delta$ iff $\Gamma\subseteq\Delta)$ iff
$\Gamma=\Delta$ iff $\Delta\in \{\Gamma\}$.

 This chain shows that  indeed $Cl(\{\Gamma\})=\{\Gamma\}$.

 (ii) The proof is similar to the proof of Lemma \ref{$T_0$compact1} (ii)

(iii) follows from (ii) as in the proof of Lemma \ref{C-separability}. $\square$
\end{proof}

 \begin{theorem}\label{rep:ext2} {\bf Topological representation theorem for C-extensional
$U$-rich EDC-lattices} Let
$\underline{D}=(D, C, \widehat{C}, \ll) $ be a C-extensional $U$-rich EDC-lattice. Then there exists a compact weakly regular $T_1$-space $X$ and
a dually dense and $C$-separable embedding $h$ of $\underline{D}$ into the
Boolean contact algebra $RC(X)$ of the regular closed sets of $X$.
Moreover:

(i) $\underline{D}$ satisfies (Con C) iff  $RC(X)$ satisfies (Con C); in this
case $X$ is connected.

(ii) $\underline{D}$ satisfies (Nor 1) iff  $RC(X)$ satisfies (Nor 1); in this
case $X$ is $\kappa$-normal.

\end{theorem}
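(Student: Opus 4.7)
The plan is to take $X = X(D)$ to be the canonical topological space of $\underline{D}$ built in Section \ref{T1 space} (with maximal clans as points and $\mathbf{CB}(X(D)) = \{h(a) : a \in D\}$ as a closed basis), and $h$ the corresponding Stone-type map $h(a) = \{\Gamma \in MaxCLAN(D) : a \in \Gamma\}$. The bulk of the work has already been packaged into Lemmas \ref{T1embedding} and \ref{T1compact1}: the former gives semiregularity of $X$ and a dually dense embedding of $\underline{D}$ into $RC(X)$ preserving $C$, $\widehat{C}$, $\ll$; the latter gives that $X$ is $T_1$ and compact, and that $h$ is $C$-separable. So the core statement of the theorem is essentially a corollary of these lemmas together with the topological characterizations in Lemma \ref{topoequivalents}.

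For the weak regularity claim, the idea is to invoke the bridging Lemma \ref{dense}. Since $h$ is a dually dense, $C$-separable embedding and $\underline{D}$ satisfies (Ext C) by hypothesis (C-extensionality), Lemma \ref{dense}(i) transfers (Ext C) to $RC(X)$. Semiregularity of $X$ is already in hand, so Lemma \ref{topoequivalents}(i) then delivers weak regularity of $X$.

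For clauses (i) and (ii), the same two-step pattern applies. For (i), Lemma \ref{dense}(ii) yields the equivalence $\underline{D} \models \text{(Con C)} \iff RC(X) \models \text{(Con C)}$, and Lemma \ref{topoequivalents}(iii) converts the latter into connectedness of $X$. For (ii), Lemma \ref{dense}(iii) gives the analogous equivalence for (Nor 1), and Lemma \ref{topoequivalents}(ii) converts it into $\kappa$-normality.

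I do not expect any genuine obstacle: every preparatory step (density, $C$-separability, semiregularity, compactness, $T_1$-ness, preservation of the three mereotopological relations) has already been proved in the excerpt under exactly the hypotheses assumed here, and U-richness is precisely what makes the transfer lemmas of Section \ref{embedding properties} applicable. The only ``care'' needed is to observe that all three hypotheses of the theorem are used at distinct points: U-richness (more specifically (U-rich $\ll$) and (U-rich $\widehat{C}$)) is what lets us identify the canonical relation $R_C$ with $R^c$ in Lemma \ref{new canonical relation1} and thus preserve $\widehat{C}$ and $\ll$; U-extensionality is what makes Corollary \ref{ext:semireg1} applicable inside Lemma \ref{T1embedding} to obtain dual density and semiregularity; and C-extensionality is what supplies axiom (EXT C), without which $h$ need not be injective on maximal clans in Lemma \ref{T1embedding}, and is also what gets passed through Lemma \ref{dense}(i) to yield weak regularity.
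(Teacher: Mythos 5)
Your proposal is correct and follows exactly the paper's own route: the paper proves this theorem as a direct corollary of Lemma \ref{T1embedding}, Lemma \ref{T1compact1} and Lemma \ref{topoequivalents}, with the axiom-transfer equivalences supplied (implicitly in the paper, explicitly in your write-up) by Lemma \ref{dense}. Your additional remarks on where each of the three hypotheses is actually used are accurate and, if anything, more detailed than the paper's one-line proof.
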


\begin{proof} The proof follows from Lemma \ref{T1embedding}, Lemma \ref{T1compact1} and Lemma \ref{topoequivalents}.

\end{proof}

\subsection{Representations in $T_2$  spaces}
In the previous section we proved representability in $T_{1}$ spaces of U-rich EDC-lattices satisfying the axiom of C-extensionality (Ext C). The $T_{1}$ property of the topological space was guaranteed by the fact that abstract points are maximal clans. In this section we will show that adding the axiom (Nor 1) we can obtain representability in compact $T_{2}$-spaces. The reason for this is that the axiom (Nor 1) makes possible to use new abstract points - the so called clusters, which are maximal clans satisfying some  additional properties yielding $T_{2}$ separability of the topological space. Clusters have been used in the compactification theory of proximity spaces (see more about their origin in \cite{Thron}). They have been adapted in algebraic form in the representation theory of contact algebras in \cite{dv,VDDB}. In \cite{dmvw2} their definition and some constructions are modified for the distributive case.   We remaind below the corresponding definition.

\begin{definition}\label{clusters} Let $\underline{D}=(D, C, \widehat{C}, \ll) $  be an EDC-lattice. A clan $\Gamma$ in $\underline{D}$ is called a cluster if it satisfies the following condition:

(Cluster)  If for all $b\in \Gamma$ we have $aCb$, then $a\in \Gamma$.

We denote the set of clusters in $\underline{D}$ by $CLUSTER(D)$.

\end{definition}

Let us note that not in all EDC-lattices there are clusters.  The following lemma shows that the axiom (Nor 1) guarantees existence of clusters and some important properties needed for the representation theorem.

\begin{lemma} \label{facts for clusters} \cite{dmvw2} Let $\underline{D}=(D, C, \widehat{C}, \ll) $ be an EDC-lattice. Then:

 (i) Every  cluster is a maximal clan.

(ii) If $\underline{D}$ satisfies (Nor 1) then every maximal clan is a cluster.

(iii) If $\Gamma$ and $\Delta$ are  clusters such that $\Gamma\not=\Delta$, then there are $a\not\in \Gamma$ and $b\not\in \Delta$ such that $a+b=1$.

\end{lemma}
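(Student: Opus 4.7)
The plan is to handle (i) as a direct unpacking of definitions, (iii) as a short application of (Nor 1) combined with the cluster property, and (ii) by first showing that (Nor 1) forces the canonical relation $R_C$ on prime filters to be transitive, after which the result drops out of Facts \ref{facts for clans} and \ref{facts for R_{C}}. For (i), I would suppose $\Gamma$ is a cluster and $\Delta \supseteq \Gamma$ is any clan. For every $a \in \Delta$ and every $b \in \Gamma \subseteq \Delta$, axiom (Clan 4) gives $aCb$, so the cluster property forces $a \in \Gamma$; hence $\Delta \subseteq \Gamma$ and $\Gamma$ is maximal.

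For (iii), by (i) both clusters are maximal clans, so I would pick $x \in \Gamma \setminus \Delta$. Since $\Delta$ is a cluster and $x \notin \Delta$, the contrapositive of the cluster property yields $y \in \Delta$ with $x \overline{C} y$. Applying (Nor 1) to $x \overline{C} y$ produces $c, d$ with $c + d = 1$, $x \overline{C} c$, and $y \overline{C} d$. Now $c \notin \Gamma$ (otherwise $x, c \in \Gamma$ would force $xCc$ by (Clan 4)) and symmetrically $d \notin \Delta$, so $a := c$ and $b := d$ meet the requirements.

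For (ii), the first step is to show that under (Nor 1) the relation $R_C$ on prime filters is transitive. Assume $U R_C V$ and $V R_C W$ but, for contradiction, $a \overline{C} c$ for some $a \in U$ and $c \in W$. Applying (Nor 1) gives $b + d = 1$ with $a \overline{C} b$ and $c \overline{C} d$; primeness of $V$ places $b \in V$ or $d \in V$, but the first case contradicts $U R_C V$ (which demands $aCb$) and the second contradicts $V R_C W$ (which demands $dCc$). Thus $R_C$ is an equivalence relation.

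With transitivity in hand, let $\Gamma$ be a maximal clan. By Facts \ref{facts for clans} (6) and (7), $\Gamma$ is the union of a single $R_C$-equivalence class $[U_0]$ of prime filters. Suppose $aCb$ for every $b \in \Gamma$; restricted to $b \in U_0$, this is the filter-level relation $[a) R_C U_0$, so Facts \ref{facts for R_{C}} (1) produces prime filters $V \supseteq [a)$ and $U_0' \supseteq U_0$ with $V R_C U_0'$. Since $U_0 \subseteq U_0'$, for any $x \in U_0$ and $y \in U_0'$ the meet $x \cdot y$ lies in the prime filter $U_0'$ and is therefore nonzero, so (C5) gives $xCy$, i.e., $U_0 R_C U_0'$. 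Combined with $V R_C U_0'$ and transitivity this yields $V R_C U_0$, so $V \in [U_0]$ and $V \subseteq \Gamma$; hence $a \in V \subseteq \Gamma$. The main obstacle is recognizing that (ii) is really a statement about transitivity of $R_C$ under (Nor 1); once that is isolated, the rest is bookkeeping with the already-established facts.
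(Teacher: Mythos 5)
The paper gives no proof of this lemma at all --- it is imported from \cite{dmvw2} --- so your argument has to stand on its own, and it essentially does: (i) is a correct unpacking of the definitions, and in (ii) your key observation that (Nor 1) makes $R_C$ transitive is the same idea the paper uses for $R^{c}$ in Case 1 of Lemma \ref{transitivity lemma}. One step in (ii) should be made explicit, though. The identification of a maximal clan $\Gamma$ with the union of a \emph{full} $R_C$-equivalence class does not follow from Facts \ref{facts for clans} (6) and (7) alone: those only say that the prime filters contained in $\Gamma$ form an $R_C$-clique, hence lie inside a single class. To conclude that every prime filter $V$ with $V R_C U_0$ is contained in $\Gamma$ you must invoke maximality: by transitivity $V$ is $R_C$-related to every prime filter contained in $\Gamma$, so $\Gamma\cup V$ is a clan by Facts \ref{facts for clans} (6), whence $V\subseteq\Gamma$. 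This is precisely the step you use when passing from $V\in[U_0]$ to $V\subseteq\Gamma$, so either spell it out or skip the equivalence-class language and apply this absorption argument directly to the prime filter $V\supseteq[a)$ that you construct.

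The more substantive issue is in (iii): your proof uses (Nor 1), which is not among the stated hypotheses of that clause. This is a defect of the statement rather than of your proof, because (iii) is actually false without (Nor 1). Take the full discrete EDC-lattice over $W=\{1,2,3\}$ with $R$ the reflexive symmetric closure of $\{(1,2),(2,3)\}$. Then $\Gamma=\{a\subseteq W: a\cap\{1,2\}\neq\varnothing\}$ and $\Delta=\{a\subseteq W: a\cap\{2,3\}\neq\varnothing\}$ are distinct clusters, but $a\notin\Gamma$ forces $a\subseteq\{3\}$ and $b\notin\Delta$ forces $b\subseteq\{1\}$, so $a+b\neq 1$; consistently, this lattice violates (Nor 1). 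Since the lemma is applied only in Lemma \ref{T2compact}, where (Nor 1) is assumed, your reading is clearly the intended one, but you should record explicitly that (iii) needs (Nor 1) as a hypothesis; with that proviso your derivation of (iii) --- producing $y\in\Delta$ with $x\overline{C}y$ from the cluster property and then splitting $1=c+d$ via (Nor 1) --- is correct.
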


To build the canonical space $X(D)$  we assume in this section that $\underline{D}=(D, C, \widehat{C}, \ll) $ is an U-rich EDC-lattice satisfying the axioms (Ext C) and (Nor 1). We define  $X(D)=CLUSTER(D)$,  $h(a)=\{\Gamma\in CLUSTER(D): a\in \Gamma\}$ and define the topology in $X(D)$ considering the set $\mathbf{CB}(X)=\{h(a): a\in D\}$ as a basis for closed sets in $X(D)$. Since the points of $X(D)$ are maximal clans, just as in Section \ref{T1 space} we can prove the following lemma.

\begin{lemma}\label{T2embedding}  The space $X(D)$ is a
semiregular  and $h$ is a dually dense  embedding of $D$ into the
contact Boolean algebra $RC(X(D))$.
\end{lemma}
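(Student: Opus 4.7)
The plan is to follow essentially the same three-step template used in Lemma \ref{T1embedding}, exploiting the fact that under the current hypotheses (in particular (Nor 1)) maximal clans and clusters coincide, so any construction that produces a maximal clan automatically produces a cluster. The only real difference from the $T_1$ case is that the ambient point space shrinks from $\mathrm{MaxCLAN}(D)$ to $\mathrm{CLUSTER}(D)$, and we must check that this shrinking does not destroy the key separation properties used before.

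First, I would verify the purely lattice-theoretic part: $h(0)=\varnothing$, $h(1)=X(D)$, $h(a+b)=h(a)\cup h(b)$ and $a\leq b\Rightarrow h(a)\subseteq h(b)$. These follow directly from clauses (Clan\,1)--(Clan\,3) in the definition of clan and do not use maximality or the cluster condition, so they go through verbatim. Then I would prove the crucial injectivity step $a\not\leq b\Rightarrow h(a)\not\subseteq h(b)$: combining (Ext C) with U-extensionality (Ext $\widehat{O}$) yields (EXT C) (Section \ref{additional axioms}), so one gets $c$ with $aCc$ and $b\overline{C}c$; by Lemma \ref{C}(i) there exist prime filters $U,V$ with $UR^{c}V$, $a\in U$, $c\in V$; the set $\Gamma_{0}=U\cup V$ is a clan (Facts \ref{facts for clans}) and can be enlarged to a maximal clan $\Gamma$, which by Lemma \ref{facts for clusters}(ii) (using (Nor 1)) is in fact a cluster. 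Since $b\overline{C}c$ forces $b\notin\Gamma$ (else $bCc$ by (Clan\,4)), we have $\Gamma\in h(a)\setminus h(b)$. At this point Corollary \ref{ext:semireg1} applies to the embedding of the upper semi-lattice $(D,0,1,+)$ into the lattice of closed sets of $X(D)$ and delivers semiregularity of $X(D)$ together with the dual density of $h$ as a lattice embedding into $RC(X(D))$.

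It remains to show that $h$ preserves $C$, $\widehat{C}$ and $\ll$, i.e. the three equivalences in Claim \ref{preservation 1}(iii)--(v). The backward directions are straightforward since clusters are clans, so any cluster witnessing $h(a)\cap h(b)\neq\varnothing$ (respectively the $\widehat{C}$ or $\ll$ analogue) already supplies the required clan. For the forward directions I would reproduce the arguments from Lemmas \ref{T0embedding1} and \ref{T1embedding}: assuming, say, $a\not\ll b$, invoke Lemma \ref{C}(ii) to obtain prime filters $U,V$ with $UR^{c}V$, $a\in U$, $b\notin V$; form the clan $\Gamma_{0}=U\cup V$ and extend it to a maximal clan $\Gamma$, which again is a cluster by Lemma \ref{facts for clusters}(ii). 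The relevant version of Claim \ref{preservation 1}(ii), whose proof only uses the basis property of $\{h(c):c\in D\}$ and Lemma \ref{special property of clans}, still gives $\Gamma\in Cl(-h(b))$, so $h(a)\cap Cl(-h(b))\neq\varnothing$. The cases for $C$ and $\widehat{C}$ are entirely analogous, invoking Lemma \ref{C}(i) and Lemma \ref{dual C}(ii) respectively.

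The potentially worrying point is whether restricting points from all maximal clans to clusters might lose some witness needed above, but this is precisely where (Nor 1) rescues us: Lemma \ref{facts for clusters}(ii) collapses the two notions, so the $T_{1}$-case constructions port over unchanged. I therefore do not expect any substantive new obstacle, and would wrap up by simply citing the $T_{1}$ proof for the repeated bookkeeping.
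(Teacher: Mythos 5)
Your proposal is correct and follows exactly the route the paper intends: the paper's own ``proof'' is just the remark that since clusters are maximal clans and, under (Nor 1), maximal clans are clusters (Lemma \ref{facts for clusters}), the argument of Lemma \ref{T1embedding} carries over verbatim. You have merely made explicit the details (injectivity via (EXT C), Corollary \ref{ext:semireg1}, and the extension of $U\cup V$ to a maximal clan, hence cluster) that the paper leaves implicit.
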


\begin{lemma}\label{T2compact}
(i) $X(D)$ is $T_{2}$,

(ii) $X(D)$ is compact,

(iii) $h$ is C-separable embedding.

\end{lemma}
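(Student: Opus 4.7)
The plan is to handle the three parts separately, leveraging the main new ingredient for this section, namely Lemma \ref{facts for clusters} (which gives us the separation property of distinct clusters from (Nor 1)), while reusing the arguments from Lemma \ref{$T_0$compact1} and Lemma \ref{C-separability} as much as possible.

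For \emph{(i)} $T_2$, I would exploit Lemma \ref{facts for clusters}(iii) directly. Given two distinct clusters $\Gamma,\Delta\in X(D)$, that lemma supplies $a\not\in\Gamma$ and $b\not\in\Delta$ with $a+b=1$. Then $U_\Gamma=X(D)\setminus h(a)$ and $U_\Delta=X(D)\setminus h(b)$ are open sets (as complements of basis closed sets), with $\Gamma\in U_\Gamma$ and $\Delta\in U_\Delta$. Since $h$ is a $(0,1,+)$-embedding (established in Lemma \ref{T2embedding}), we have
\[
U_\Gamma\cap U_\Delta \;=\; X(D)\setminus\bigl(h(a)\cup h(b)\bigr)\;=\;X(D)\setminus h(a+b)\;=\;X(D)\setminus h(1)\;=\;\varnothing,
\]
so $\Gamma$ and $\Delta$ are separated by disjoint open neighbourhoods.

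For \emph{(ii)} compactness, I would mimic the proof of Lemma \ref{$T_0$compact1}(ii) and reduce, via the closed base $\mathbf{CB}(X(D))$, to showing that any family $\{h(a_i)\}_{i\in I}$ with the finite intersection property has nonempty intersection. The FIP hypothesis means that for every finite $J\subseteq I$ there is a cluster containing all $a_i$ ($i\in J$); in particular the finite meets $\prod_{i\in J}a_i$ are nonzero, so $\{a_i:i\in I\}$ generates a proper filter $F$ in $\underline{D}$. By the Filter-extension Lemma, $F$ extends to a prime filter $U$, which is a clan (Facts \ref{facts for clans}(1)); by Facts \ref{facts for clans}(5), $U$ extends to a maximal clan $\Gamma$; and by Lemma \ref{facts for clusters}(ii) (where (Nor 1) is essential) $\Gamma$ is actually a cluster, hence lies in $X(D)$. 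Since $a_i\in\Gamma$ for every $i$, we get $\Gamma\in\bigcap_i h(a_i)$.

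For \emph{(iii)}, C-separability follows from compactness via the very same argument as in Lemma \ref{C-separability}: any two disjoint regular closed sets, written as intersections of basis elements $h(c)$, can be replaced by finite subintersections by compactness, and these finite meets give the separating elements required by each of the three clauses of Definition \ref{C-separability1}. The only substantive obstacle in the whole proof is ensuring (in (ii)) that the extension process yields a cluster rather than merely a maximal clan; this is precisely what (Nor 1) buys us through Lemma \ref{facts for clusters}(ii), and without it the choice of $X(D)=CLUSTER(D)$ would not admit a compactness argument of this form.
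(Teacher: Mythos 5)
Parts (i) and (iii) of your proposal are correct and coincide with the paper's own argument: (i) is exactly the separation argument via Lemma \ref{facts for clusters}(iii) together with $h(a)\cup h(b)=h(a+b)=X(D)$, and (iii) is the compactness-based argument already given for Lemma \ref{C-separability}. The problem is in (ii). Your step ``for every finite $J\subseteq I$ there is a cluster containing all $a_i$; in particular the finite meets $\prod_{i\in J}a_i$ are nonzero, so $\{a_i\}$ generates a proper filter'' is an invalid inference. A cluster is a clan, and a clan is \emph{not} a filter: if $a,b\in\Gamma$ for a clan $\Gamma$, condition (Clan 4) gives only $aCb$, and contact does not imply overlap --- this is exactly the phenomenon of external contact. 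For instance, in $RC(\mathbb{R})$ the sets $a=[0,1]$ and $b=[1,2]$ both belong to the cluster $\Gamma_{1}=\{c\in RC(\mathbb{R}):1\in c\}$, so $h(a)\cap h(b)\not=\varnothing$, yet $a\cdot b=Cl(Int(a\cap b))=\varnothing=0$. Hence the filter generated by a family with the finite intersection property may contain $0$, the Filter-extension Lemma is not applicable, and your whole chain (prime filter $\to$ clan $\to$ maximal clan $\to$ cluster) never gets started.

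The repair is to work with the finite intersection property directly rather than through filters (this is in effect what the proof of Lemma 19 of \cite{dmvw2}, to which the paper ultimately defers, does). Given $\{a_i\}_{i\in I}$ with $\bigcap_{i\in J}h(a_i)\not=\varnothing$ for every finite $J$, apply Zorn's Lemma to the collection of all sets $M\subseteq D$ with $M\supseteq\{a_i:i\in I\}$ that retain this finite intersection property, and verify that a maximal such $M$ is a clan: (Clan 1)--(Clan 3) follow from maximality together with $h(0)=\varnothing$, the monotonicity of $h$ and $h(a+b)=h(a)\cup h(b)$, while (Clan 4) holds because any two elements of $M$ lie in a common cluster and are therefore in contact. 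This clan contains every $a_i$; extend it to a maximal clan by Facts \ref{facts for clans}(5), which by (Nor 1) and Lemma \ref{facts for clusters}(ii) is a cluster, i.e.\ a point of $X(D)$ lying in $\bigcap_{i\in I}h(a_i)$. Your closing observation about the role of (Nor 1) is correct; it is only the route from the finite intersection property to a clan that needs to be replaced.
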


\begin{proof} (i) To show that the space $X(D)$ is $T_{2}$ suppose that $\Gamma,\Delta$ are two different clusters. We have to find two disjoint open sets $A,B$ such that $\Gamma\in A$ and $\Delta \in B$. By Lemma \ref{facts for clusters} (iii) there are $a,b\in D$ such that $a\not\in \Gamma$ and $b\not\in \Delta$ such that $a+b=1$. Then by Lemma \ref{T2embedding} we get $\Gamma\not\in h(a)$, $\Delta\not\in h(b)$ and $h(a)\cup h(b)=X(D)$, hence $-h(a)\cap -h(b)=\varnothing$. Define $A=-h(a)$, $B=-h(b)$. Since $h(a)$ and $h(b)$ are closed sets, then $A$ and $B$ are open sets which separate the abstract points $\Gamma$ and $\Delta$.

The proof of (ii) and (iii) is the same as the proof of (ii) and (iii) in Lemma \ref{T1compact1}. $\square$
\end{proof}

\begin{theorem}\label{T2rep} {\bf Topological representation theorem for
$U$-rich EDC-lattices satisfying (Ext C) and (Nor 1).} Let
$\underline{D}=(D, C, \widehat{C}, \ll) $ be an  $U$-rich EDC-lattice satisfying (Ext C) and (Nor 1). Then there exists a compact  $T_2$-space $X$ and
a dually dense and $C$-separable embedding $h$ of $\underline{D}$ into the
Boolean contact algebra $RC(X)$ of the regular closed sets of $X$.
Moreover $\underline{D}$ satisfies (Con C) iff  $RC(X)$ satisfies (Con C) and in this case  $X$ is connected.
\end{theorem}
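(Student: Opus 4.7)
The plan is to assemble the theorem as a direct corollary of the groundwork already laid by Lemma \ref{T2embedding} and Lemma \ref{T2compact}, together with the transfer machinery of Section \ref{embedding properties}. Take $X := X(D) = \mathrm{CLUSTER}(D)$ as the canonical space and $h(a) = \{\Gamma \in \mathrm{CLUSTER}(D) : a \in \Gamma\}$ as the canonical map, with closed basis $\mathbf{CB}(X) = \{h(a) : a \in D\}$. Because $\underline{D}$ is assumed to be U-rich and to satisfy (Ext C) and (Nor 1), the hypotheses of both cluster-based lemmas are in force (in particular Lemma \ref{facts for clusters}(ii) guarantees that clusters exist in abundance, since every maximal clan is a cluster under (Nor 1)).

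First I would invoke Lemma \ref{T2embedding} to conclude that $X(D)$ is a semiregular topological space and that $h$ is a dually dense embedding of $\underline{D}$ into the contact Boolean algebra $RC(X(D))$ of regular closed sets. Then I would invoke Lemma \ref{T2compact} to obtain simultaneously the $T_2$ separation axiom, compactness of $X(D)$, and the C-separability of $h$. This already yields the first (unnumbered) assertion of the theorem: $X = X(D)$ is a compact $T_2$ space into which $\underline{D}$ embeds densely (in the dual sense) and C-separably via $h$.

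For the (Con C) equivalence I would apply Lemma \ref{dense}(ii): since $h$ witnesses $\underline{D}$ as a C-separable EDC-sublattice of $RC(X(D))$, the axiom (Con C) transfers in both directions between $\underline{D}$ and $RC(X(D))$. Combining this with Lemma \ref{topoequivalents}(iii), which equates (Con C) for $RC(X)$ with topological connectedness of $X$, gives the desired equivalence and the fact that $X$ is connected precisely when $\underline{D}$ satisfies (Con C).

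There is no real obstacle — the theorem is essentially packaging, and all the substantive work has already been done in establishing the cluster construction, the embedding properties, and the transfer lemmas. The only point to double-check is that the use of maximal-clan abstract points together with (Nor 1) does not invalidate the preservation of $C$, $\widehat{C}$ and $\ll$ by $h$; but this is exactly what Lemma \ref{T2embedding} asserts, since its proof parallels that of Lemma \ref{T1embedding} with maximal clans replaced by clusters (and every cluster is a maximal clan by Lemma \ref{facts for clusters}(i)), so the Claim \ref{preservation 1} style arguments go through verbatim. $\square$
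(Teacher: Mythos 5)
Your proposal is correct and follows essentially the same route as the paper, which likewise derives the theorem as a direct corollary of Lemma \ref{T2embedding}, Lemma \ref{T2compact} and Lemma \ref{topoequivalents}; your explicit appeal to Lemma \ref{dense}(ii) for the two-way transfer of (Con C) merely spells out a step the paper leaves implicit. No gaps.
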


\begin{proof} The proof follows from Lemma \ref{T2embedding}, Lemma \ref{T2compact} and \ref{topoequivalents}. $\square$

\end{proof}

Let us note that this theorem generalizes several theorems from \cite{dv,V,deVries,VDDB}

\section{Topological representation theory of   O-rich\\ EDC-lattices}

This section is devoted to the theory of dense  representations  for O-rich EDC-latices (see Definition  \ref{U-rich and O-rich lattices}). According to Theorem \ref{semiregularity} we will look for  dense representations with regular open sets. This case is completely dual to the corresponding theory developed in Section \ref{Uext}. For this reason we will only sketch  the main representation scheme and the  definitions of abstract points for the $T_{0}$, $T_{1}$ and $T_{2}$ representations.

The representation scheme is dual to the scheme presented in Section \ref{Uext}:

 \begin{itemize}

\item  Define a set $X(D)$ of "abstract points" of $\underline{D}$,

\item define a topology in $X(D)$ by the set $\mathbf{OB}(X(D))=\{h(a): a\in D\}$, considered as an open base of the topology, where $h$ is the intended embedding of Stone type: $h(a)=\{\Gamma: \Gamma$ is "abstract point" and $a\in \Gamma\}$. $X(D)$ is called the \emph{canonical topological space of $\underline{D}$} and $h$ is called \emph{canonical embedding},

\item establish that $h$ is a  dense embedding of the lattice $\underline{D}$ into the Boolean algebra $RO(X(D))$ of regular open sets of  the space $X(D)$.

 \end{itemize}

For the case of $T_{0}$ dense representation we consider a notion of abstract  point which is dual to the notion of clan. This is the so called E-filter (Efremovich filter). E-filters were used in the theory of proximity spaces (see \cite{Thron}). In the context of contact algebras they were introduced for the first time in \cite{dv}. The definition adapted for the language of EDC-lattices is the following.

\begin{definition}\label{E-filter} Let $\underline{D}=(D, C, \widehat{C}, \ll)$ be an EDC-lattice. A subset $\Gamma\subseteq D$ is called an \emph{E-filter} if it  satisfies the following properties:

(E-fil 1)  $\Gamma$ is a proper filter in $\underline{D}$, i.e. $0\not\in \Gamma$,

(E-fil 2)  If $a\not\in \Gamma$ and $b\not\in \Gamma$, then $a\widehat{C}b$.

$\Gamma$ is a minimal E-filter if it is minimal in the set of all E-filters of $\underline{D}$ with respect to set inclusion.

\end{definition}

This definition comes as an abstraction  from the following natural example. Let $X$ be a topological space, $x\in X$ and $RO(X)$ be the set of all regular-open sets of $X$. Then the set $\Gamma_{x}=\{a\in RO(X): x\in a\}$ is an E-filter in the contact algebra $RO(X)$. Note that the definition of E-filter is based not on the relation of contact $C$, but on the dual contact $\widehat{C}$.

A general construction of E-filters can be obtain dualizing the construction of clans from Section \ref{T0repesentations}. Just to show how this dual construction goes on and how the O-rich axioms works, we will repeat some steps omitting the proofs.

 First we will introduce a new canonical relation between prime filters.

\begin{definition} Let $U,V$ be prime ideals. Define a new  canonical relation  $\widehat{R}_{\widehat{C}}$  (\emph{$\widehat{R}_{\widehat{C}}$-canonical relation}) between prime ideals as follows:

$U\widehat{R}_{\widehat{C}}V\leftrightarrow_{def} (\forall a\in U)(\forall b\in V)(a\widehat{C}b)$.

If $U,V$ are prime filters then we define $UR_{\widehat{C}}V\leftrightarrow_{def} \overline{U}\widehat{R}_{\widehat{C}}\overline{V}$.

\end{definition}

Let us note that the  relation $\widehat{R}_{\widehat{C}}$ depend only on $\widehat{C}$ and can be defined also for ideals. It is different from the canonical relation $\widehat{R}^{c}$ between prime ideals defined in Section \ref{relrepresentation}, but  the presence of O-rich axioms  makes it equivalent to $\widehat{R}^{c}$ as it is stated in the following lemma.

\begin{lemma}

(i) $\widehat{R}_{\widehat{C}}$ is a reflexive and symmetric relation.

(ii) If $\underline{D}$ satisfies the axioms (O-rich $\ll $) and  (O-rich $\widehat{C})$, then $\widehat{R}_{\widehat{C}}=\widehat{R}^{c}$.

\end{lemma}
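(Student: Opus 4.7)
The plan is to prove this lemma by direct dualization of Lemma \ref{new canonical relation1}. Since $\widehat{R}_{\widehat{C}}$ is defined purely in terms of $\widehat{C}$ on prime ideals, and $\widehat{R}^c$ was defined as $\Gamma\widehat{R}^c\Delta \Leftrightarrow \overline{\Gamma}R^c\overline{\Delta}$ (with $\overline{\Gamma},\overline{\Delta}$ being prime filters), everything is set up so that each step of the earlier proof translates ``under bar'' via the duality principle of Observation \ref{duality}.

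For part (i), reflexivity $\Gamma \widehat{R}_{\widehat{C}} \Gamma$ follows from axiom $(\widehat{C}5)$: if $a,b\in\Gamma$ and $\Gamma$ is a prime (in particular proper) ideal, then $a+b\in\Gamma$ so $a+b\neq 1$, whence $a\widehat{C}b$. Symmetry is axiom $(\widehat{C}4)$.

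For part (ii), the inclusion $\widehat{R}^c\subseteq\widehat{R}_{\widehat{C}}$ is immediate from the fourth clause of the definition of $R^c$: if $\overline{\Gamma}R^c\overline{\Delta}$ and $a\in\Gamma$, $b\in\Delta$, then $a\notin\overline{\Gamma}$, $b\notin\overline{\Delta}$, so $a\widehat{C}b$. For the converse $\widehat{R}_{\widehat{C}}\subseteq\widehat{R}^c$, I assume $\Gamma\widehat{R}_{\widehat{C}}\Delta$ with $\Gamma,\Delta$ prime ideals, set $U=\overline{\Gamma}$, $V=\overline{\Delta}$ (prime filters), and verify the four clauses of $UR^c V$. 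The clause $a\notin U, b\notin V \Rightarrow a\widehat{C}b$ is literally the hypothesis. For $a\in U, b\in V \Rightarrow aCb$: if instead $a\overline{C}b$, apply (O-rich~C) to obtain $c,d$ with $a\cdot c=0$, $b\cdot d=0$, $c\overline{\widehat{C}}d$; primality of $\Gamma$ forces $c\in\Gamma$ (since $a\notin\Gamma$) and similarly $d\in\Delta$, contradicting $\Gamma\widehat{R}_{\widehat{C}}\Delta$. The two $\ll$-clauses are handled analogously using (O-rich~$\ll$): e.g.\ for $a\in U, b\notin V \Rightarrow a\not\ll b$, assuming $a\ll b$ yields $c$ with $a\cdot c=0$ and $c\overline{\widehat{C}}b$; primality gives $c\in\Gamma$, while $b\in\Delta$, again contradicting the hypothesis.

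No serious obstacle is expected; the proof is a mechanical dualization, and the only content is the use of the O-rich axioms to convert each non-$\widehat{C}$ relation into a dual-contact statement that can be refuted against $\Gamma\widehat{R}_{\widehat{C}}\Delta$. The main thing to be careful about is keeping track of the switch ``element in prime filter $U$'' $\leftrightarrow$ ``element not in prime ideal $\overline{U}=\Gamma$'' so that the primality of $\Gamma$ (resp.\ $\Delta$) is applied at the right place when one of $a\cdot c$ or $b\cdot d$ equals $0$.
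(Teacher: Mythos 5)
Your proof is correct and takes essentially the same route as the paper, which omits this proof precisely because it is the dualization of the proof of the corresponding lemma for $R_{C}$ and $R^{c}$ (with (O-rich C), (O-rich $\ll$) replacing (U-rich $\widehat{C}$), (U-rich $\ll$)); your case analysis carries out that dualization correctly, including the right use of primality of the ideals at $a\cdot c=0$.
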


The following statement lists some facts about the relation $R_{C}$.

 \begin{facts}
 \begin{enumerate}
  \item Let $F,G$ be ideals and $F\widehat{R}_{\widehat{C}}G$ then there are prime ideals $U,V$ such that $F\subseteq U$, $G\subseteq V$ and $U\widehat{R}_{\widehat{C}}V$.

\item  For all $a,b\in D$: $a\widehat{C}b$ iff there exist prime ideals $U,V$ such that $U\widehat{R}_{\widehat{C}} V$, $a\in U$ and $b\in V$.

\item For all $a,b\in D$: $a\widehat{C}b$ iff there exist prime filters $U,V$ such that $UR_{\widehat{C}} V$, $a\not\in U$ and $b\not\in V$.

 \end{enumerate}
 \end{facts}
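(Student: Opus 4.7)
The plan is to obtain these three facts by invoking the duality principle of Observation \ref{duality} applied to the results for $R_{C}$ already collected in Facts \ref{facts for R_{C}}. Under the dual pairings $(C,\widehat{C})$, $(\leq,\geq)$, $(+,\cdot)$, filter/ideal, and prime filter/prime ideal, the relation $\widehat{R}_{\widehat{C}}$ on ideals is the formal dual of $R_{C}$ on filters: the axioms $(\widehat{C}1)$--$(\widehat{C}5)$ are dual to $(C1)$--$(C5)$, and the Strong ideal-extension Lemma (Lemma \ref{strong extension lemma}(2)) is dual to the Strong filter-extension Lemma. Every step of the proof of Facts \ref{facts for R_{C}} therefore transfers mechanically.

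For a more self-contained derivation of (1) I would argue as follows. Given ideals $F,G$ with $F\widehat{R}_{\widehat{C}}G$, first extend $F$ to a prime ideal $U$ preserving the relation with $G$. Apply Zorn's Lemma to the family $\{H : H \text{ is an ideal},\ F\subseteq H,\ H\widehat{R}_{\widehat{C}}G\}$; a maximal element $U$ must be prime, since if $a\cdot b\in U$ but $a,b\notin U$, then by maximality both $U\oplus(a]$ and $U\oplus(b]$ leave the family, producing $g_{1},g_{2}\in G$ with $(U\oplus(a])\cap\,\ldots$ violating $\widehat{C}$, and $(\widehat{C}3)$ applied with $(\widehat{C}4)$ delivers the contradiction (this is exactly the dualization of the standard argument for $R_{C}$). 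Then repeat the same Zorn construction with $U$ fixed to produce a prime ideal $V\supseteq G$ with $U\widehat{R}_{\widehat{C}}V$.

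Fact (2): the $(\Leftarrow)$ direction is immediate from the definition of $\widehat{R}_{\widehat{C}}$. For $(\Rightarrow)$, suppose $a\widehat{C}b$. By $(\widehat{C}1)$ we have $a,b\neq 1$, so $(a]$ and $(b]$ are proper ideals; $(\widehat{C}2)$ gives $x\widehat{C}y$ for every $x\leq a$ and $y\leq b$, hence $(a]\widehat{R}_{\widehat{C}}(b]$, and now (1) applied to $F=(a]$, $G=(b]$ produces the required prime ideals. Fact (3) is just (2) rewritten via the definitional identity $UR_{\widehat{C}}V\Leftrightarrow \overline{U}\widehat{R}_{\widehat{C}}\overline{V}$ and the bijection between prime filters and prime ideals induced by complementation (Facts \ref{facts}(1)); the translations $a\in\overline{U}\Leftrightarrow a\notin U$ and $b\in\overline{V}\Leftrightarrow b\notin V$ complete the reformulation.

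The main obstacle is purely bookkeeping: one must verify that the Zorn argument really produces a prime ideal in the dual setting, which amounts to checking that axiom $(\widehat{C}3)$ behaves with respect to meets as $(C3)$ behaves with respect to joins. This is guaranteed by Observation \ref{duality} together with the explicit availability of the dual extension lemmas, so no new mereotopological ideas beyond those already exploited for the $C$-case are needed.
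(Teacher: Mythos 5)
Your proposal is correct and matches the paper's intent exactly: the paper states these facts without proof, in a section that explicitly declares everything to be the dual of the $R_{C}$ case (Facts \ref{facts for R_{C}}, themselves cited from the literature), so duality via the pairings of Observation \ref{duality} is precisely the intended argument. Your additional self-contained Zorn-style derivation of item (1), the reduction of (2) to (1) via the principal ideals $(a]$ and $(b]$ using $(\widehat{C}1)$--$(\widehat{C}2)$, and the translation of (3) from (2) through the prime-filter/prime-ideal complementation are all sound and merely fill in details the paper omits.
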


In the following lemma we list some facts about E-filters.

\begin{facts}\label{facts for E-filters}

\begin{enumerate}

\item  Every   prime filter is an E-filter.

\item If $\Gamma$ is an E-filter and $a\not\in \Gamma$, then there exists a prime filter $U$ such that $\Gamma\subseteq U$ and $a\not\in U$.

\item  Every  E-filter   $\Gamma$ is the intersection of all prime filters containing $\Gamma$.

\item Every E-filter contains a minimal E-filter.

\item Let $\Sigma$ be a nonempty set of prime filters such that for every $U,V\in \Sigma$ we have $UR_{\widehat{C}}V$ and let $\Gamma$ be the intersection  of the elements of $\Sigma$. Then $\Gamma$ is an  E-filter and every E-filter can be obtained in this way.

\item Let $U,V$ be prime filters, $\Gamma$ be an E-filter,  $\Gamma\subseteq U$ and $\Gamma\subseteq V$. Then $UR_{\widehat{C}}V$ and $UR^{c}V$.
\end{enumerate}

\end{facts}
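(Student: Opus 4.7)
The plan is to derive each of the six statements as a dual of the corresponding item of Facts~\ref{facts for clans}, using the duality principle (Observation~\ref{duality}) that swaps $C\leftrightarrow\widehat{C}$, $R^c\leftrightarrow\widehat{R}^c$, filters with ideals, and the ``union of prime filters'' construction of clans with the ``intersection of prime filters'' construction of E-filters. The section's standing O-rich hypothesis will supply $R_{\widehat{C}}=R^c$ on prime filters (the dual of Lemma~\ref{new canonical relation1}), which is what eventually promotes $R_{\widehat{C}}$-statements to $R^c$-statements in item~6.

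For item~1, primeness of $F$ together with axiom $(\widehat{C}5)$ does the work: $a,b\notin F$ forces $a+b\notin F$, hence $a+b\neq 1$, hence $a\widehat{C}b$. Item~2 is an immediate application of the Filter-extension Lemma (Lemma~\ref{feL}) to the filter $\Gamma$ and the principal ideal $(a]$, which are disjoint because $a\notin\Gamma$ and $\Gamma$ is a filter. Item~3 is then the contrapositive of item~2, and item~5 is a routine verification that an intersection $\Gamma=\bigcap\Sigma$ of prime filters that are $R_{\widehat{C}}$-related pairwise is an E-filter: unwinding $UR_{\widehat{C}}V\Leftrightarrow\overline{U}\widehat{R}_{\widehat{C}}\overline{V}$ one sees that whenever $a\notin U\in\Sigma$ and $b\notin V\in\Sigma$ one has $a\widehat{C}b$, so any $a,b\notin\Gamma$ produce such a witness pair. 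The converse half of item~5 comes from item~3 combined with item~6 applied to the set $\{U\in PF(D):\Gamma\subseteq U\}$. For item~6, $\Gamma\subseteq U\cap V$ lets one pull any $a\notin U,\ b\notin V$ back to $a,b\notin\Gamma$, so the E-filter property of $\Gamma$ gives $a\widehat{C}b$; this is exactly $UR_{\widehat{C}}V$, and the equivalence with $UR^c V$ then follows from the O-rich hypothesis.

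The one step that I expect to require genuine care, and which is the main obstacle, is item~4. The plan is to apply Zorn's Lemma to the poset of E-filters contained in $\Gamma$ ordered by reverse inclusion, for which I need to verify that the intersection $F^{\infty}=\bigcap_i F_i$ of a descending chain of E-filters is again an E-filter. The filter axioms and properness pass to $F^\infty$ without issue; the nontrivial point is the E-filter condition, which does \emph{not} in general survive arbitrary intersections, but does survive chains by the following trick: given $a,b\notin F^\infty$, pick $F_{i_a}$ and $F_{i_b}$ in the chain omitting $a$ and $b$ respectively, and use nestedness to conclude that the smaller of the two omits both, so its own E-filter property delivers $a\widehat{C}b$. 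Once item~4 is secured by this argument, the remaining items are essentially dictionary translations of the clan case.
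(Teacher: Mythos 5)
Your proof is correct and takes essentially the approach the paper intends: the paper states these facts without any proof, presenting them as the duals of Facts~\ref{facts for clans} (themselves cited to \cite{dv,dmvw2}) obtained via the duality principle of Observation~\ref{duality} and the standing O-rich hypothesis, which is exactly the dictionary you use. Your explicit arguments --- the Filter-extension Lemma for item~2, the chain/Zorn argument for item~4, and the use of $R_{\widehat{C}}=R^{c}$ under the O-rich axioms for item~6 --- correctly supply the details the paper leaves to the reader.
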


Using the above facts one can prove the following representation theorem.

\begin{theorem} Let $\underline{D}=(D, C, \widehat{C}, \ll)$ be an O-rich  EDC-lattice. Then there exists a compact semi-regular space $X$ and a dense and $\widehat{C}$-separable embedding $h$ from $\underline{D}$ into the contact algebra $RO(X)$ of regular-open sets of $X$. Moreover:

(i) If $\underline{D}$ satisfies (Ext $\widehat{C}$), then $X$ is weakly regular,

(ii) If $\underline{D}$ satisfies (Con $\widehat{C}$), then $X$ is a connected space,

(iii) If $\underline{D}$ satisfies (Nor 2), then $X$ is $\kappa$-normal.

\end{theorem}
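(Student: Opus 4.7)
The plan is to mirror the $T_0$-representation of U-rich EDC-lattices (Theorem \ref{rep:ext1}) in its fully dual form, exploiting the Duality Principle stated in Observation \ref{duality} together with the facts about E-filters and the canonical relation $R_{\widehat{C}}$ already listed. Take $X(D)=EF(D)$, the set of all E-filters of $\underline{D}$, and define the Stone-like mapping $h(a)=\{\Gamma\in EF(D):a\in\Gamma\}$. Using the properties of E-filters one verifies that $h(0)=\varnothing$, $h(1)=X(D)$, $h(a\cdot b)=h(a)\cap h(b)$, so the family $\mathbf{OB}(X(D))=\{h(a):a\in D\}$ is closed under finite intersections and can serve as an open base of a topology on $X(D)$.

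I would next show that $h$ is an injection of the lower semi-lattice $(D,0,1,\cdot)$ into the lattice of open sets of $X(D)$; injectivity uses Facts about E-filters (a prime filter is an E-filter), so $a\not\leq b$ yields a prime filter $U$ witnessing $b\in U$, $a\notin U$. The O-richness of $\underline{D}$ gives (Ext O), hence by Corollary \ref{ext:semireg2} the space $X(D)$ is semiregular, $\mathbf{OB}(X(D))\subseteq RO(X(D))$, $h$ is a \emph{dense} embedding into the Boolean algebra $RO(X(D))$, and $h(a+b)=\mathrm{Int}(\mathrm{Cl}(h(a)\cup h(b)))$. Preservation of the three mereotopological relations is the dual of Claim \ref{preservation 1}: $a\widehat{C}b$ iff there is an E-filter $\Gamma$ with $a,b\notin\Gamma$ (obtained by intersecting prime filters $U,V$ with $UR_{\widehat{C}}V$, $a\notin U$, $b\notin V$ given by Lemma \ref{dual C}(ii)); $aCb$ and $a\ll b$ are handled dually using Lemmas \ref{C} and \ref{dual C} together with the O-rich axioms (which, via the dual of Lemma \ref{new canonical relation1}, make $R_{\widehat{C}}$ coincide with $R^c$). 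In open-set form these become $h(a)Ch(b)$ iff $\mathrm{Cl}(h(a))\cap\mathrm{Cl}(h(b))\neq\varnothing$, $h(a)\widehat{C}h(b)$ iff $h(a)\cup h(b)\neq X(D)$, and $h(a)\ll h(b)$ iff $\mathrm{Cl}(h(a))\subseteq h(b)$.

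For compactness of $X(D)$ I would follow the standard Stone-style argument dual to Lemma \ref{$T_0$compact1}: a family of basic closed sets $\{X(D)\setminus h(a_i)\}$ with the finite intersection property generates an ideal disjoint from a suitable filter, and an appropriate application of the Ideal-extension / Filter-extension Lemma \ref{feL} produces a common point as an E-filter, exactly the way clans produced common points in the U-rich case. Once compactness is in hand, $\widehat{C}$-separability (in the sense of Definition \ref{dualCseparable}) follows by exactly the argument of Lemma \ref{C-separability} carried out in the open-base setting: any two regular-open sets satisfying the hypothesis can be shrunk, via compactness, to two basic members of $\mathbf{OB}(X(D))$ of the required form, because basic sets are closed under finite unions in $RO(X(D))$ by the dense-embedding formula for $+$.

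Finally, the ``moreover'' clauses drop out of the machinery of Section \ref{embedding properties}: since $h$ is both dense and $\widehat{C}$-separable, Lemma \ref{dual dense} transfers (Ext $\widehat{C}$), (Con $\widehat{C}$) and (Nor 2) between $\underline{D}$ and $RO(X)$, and Lemma \ref{topoequivalents} then translates these into weak regularity, connectedness, and $\kappa$-normality of $X$ respectively. I expect the main obstacle to be neither the duality bookkeeping nor the compactness/separability argument, but verifying cleanly that the O-rich axioms really do force $R_{\widehat{C}}=R^c$ on prime filters and, in particular, that the preservation of $\ll$ through $h$ works in the open-set reading $\mathrm{Cl}(h(a))\subseteq h(b)$; this is precisely where the two additional O-rich axioms (O-rich $\ll$) and (O-rich $\widehat{C}$) are indispensable, and checking that the corresponding dualizations of Lemmas \ref{C} and \ref{dual C} go through for E-filters rather than clans is the only step that is not purely formal.
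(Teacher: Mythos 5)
Your proposal is correct and follows essentially the same route as the paper, which itself only sketches this proof by dualizing the U-rich $T_0$-representation: E-filters as abstract points, the open base $\{h(a)\}$, Corollary \ref{ext:semireg2} for density and semiregularity, the canonical relation $R_{\widehat{C}}$ made to coincide with $R^{c}$ by the O-rich axioms, compactness and $\widehat{C}$-separability by the dual of Lemmas \ref{$T_0$compact1} and \ref{C-separability}, and the transfer of the additional axioms via Lemmas \ref{dual dense} and \ref{topoequivalents}. Your identification of the $R_{\widehat{C}}=R^{c}$ verification and the preservation of $\ll$ in the form $\mathrm{Cl}(h(a))\subseteq h(b)$ as the only non-formal steps matches exactly where the paper's duality argument does genuine work.
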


Abstract points for dense representations in $T_{1}$ spaces are minimal E-filters and abstract points for dense representations in $T_{2}$ spaces are duals of clusters introduced in \cite{dv} under the name \emph{co-clusters}. We adapt this notion for the language of EDC-lattices as follows:

\begin{definition} An E-filter $\Gamma$ is called co-cluster if it satisfies the following condition:

(Co-cluster) If  $(\forall b\not\in \Gamma)(a\widehat{C}b)$, then $a\not\in \Gamma$. (or, equivalently, if $a\in \Gamma$, then $(\exists b\not\in \Gamma)(a\overline{\widehat{C}}b))$.

\end{definition}

Let us show, for instance, the following statement for co-clusters, which is dual to the corresponding property for clusters as  maximal clans:

 \begin{lemma}
 Every co-cluster is a minimal E-filter.

\end{lemma}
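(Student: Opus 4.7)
The plan is to prove this by direct dualization of the argument that every cluster is a maximal clan (Lemma \ref{facts for clusters}(i)). The intuition is: a cluster is ``saturated from above'' by the contact relation, which forces maximality among clans; symmetrically, a co-cluster is ``saturated from below'' by the dual contact relation, which should force minimality among E-filters. Since the definitions of clan/cluster and E-filter/co-cluster are dual under the exchange $(C,\widehat{C})$ and the swap of membership with non-membership, a mechanical dualization should work.

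Concretely, I would argue by contradiction. Suppose $\Gamma$ is a co-cluster but not a minimal E-filter, so there exists an E-filter $\Gamma'$ with $\Gamma' \subsetneq \Gamma$. Pick any $a \in \Gamma \setminus \Gamma'$. Apply the co-cluster property at $a \in \Gamma$ to get some $b \notin \Gamma$ with $a\,\overline{\widehat{C}}\,b$. Since $\Gamma' \subseteq \Gamma$ gives $\overline{\Gamma} \subseteq \overline{\Gamma'}$, the witness $b$ also lies outside $\Gamma'$. Now both $a \notin \Gamma'$ and $b \notin \Gamma'$, so the E-filter axiom (E-fil 2) for $\Gamma'$ yields $a\,\widehat{C}\,b$, contradicting $a\,\overline{\widehat{C}}\,b$.

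There is essentially no obstacle: the only slightly subtle point is to get the direction of the inclusion right when transporting $b \notin \Gamma$ to $b \notin \Gamma'$, but this is immediate from $\Gamma' \subseteq \Gamma$. No use of the O-rich axioms or of the ambient lattice structure is required beyond the two definitions themselves, which makes this essentially a one-line dual of the proof of Lemma \ref{facts for clusters}(i). Accordingly, I would present the proof in two or three lines and then, if desired, remark that it is strictly dual to the cluster case.
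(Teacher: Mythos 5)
Your proof is correct and is essentially the paper's own argument: both take $a\in\Gamma\setminus\Gamma'$, invoke the co-cluster property to produce $b\notin\Gamma$ with $a\overline{\widehat{C}}b$, and then play this against axiom (E-fil 2) for the smaller E-filter $\Gamma'$. The only difference is cosmetic — the paper phrases the last step contrapositively (deducing $b\in\Gamma'\subseteq\Gamma$, contradicting $b\notin\Gamma$) while you deduce $b\notin\Gamma'$ and contradict $a\overline{\widehat{C}}b$ directly.
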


\begin{proof} Suppose that $\Gamma$ is a co-cluster which is not a minimal E-filter. Then there exists an E-filter $\Delta$ such that $\Delta\subset\Gamma$, so $a\in \Gamma$ and $a\not\in \Delta$ for some $a$. Then there exists $b\not\in\Gamma $ such that $a\overline{\widehat{C}}b$. From here we get $b\in \Delta$. Consequently $b\in\Gamma$ - a contradiction.$\square$

\end{proof}

We left to the reader to formulate and proof the dual analog of Theorem \ref{rep:ext2} and Theorem \ref{T2rep}.

\section{Concluding remarks}

In this paper we generalized the notion of contact algebra by weakening the algebraic part to distributive lattice. One solution of this problem   was given  in \cite{dmvw2} including in the definition only the contact relation. However, the obtained axiomatization in \cite{dmvw2} is in a sense "incomplete", because it does not contain the definable in the Boolean case mereotopological relations of dual contact $\widehat{C}$ and non-tangential inclusion $\ll$ and its dual  $\gg$ and in this sense  the system is not closed under duality. We succeed in this paper to axiomatize all these relations considered as primitives  on the base of distributive lattices by means of universal first-order axioms. The resulting   system  is called "extended distributive contact lattice" (EDC-lattice). In this way we obtain, among others, the following two results. First, EDC-lattice is closed under duality,  and second,  it can be considered as an axiomatization of the universal fragment of contact algebras in the language of distributive lattices with the relations $C$, $\widehat{C}$ and $\ll$. We developed topological representation theory of EDC-lattices by means of regular closed and regular open sets generalizing in a quite non-trivial way the corresponding representation theory for contact algebras. Considering this representation theory on a weaker lattice  base   provided a deeper insight into  the interaction  of some notions taking place in the representation, which cannot be seen in the Boolean case. For instance we show the role of extensionality of underlap and overlap relations in case of dual dense and dense embeddings.

Our future plans include building of new  logics for qualitative spatial representation and reasoning based on EDC-lattices, studying the standard logical problems related to them: axiomatizability, decidability or undecidability, complexity. A good source for possible  generalizations and extensions is the paper \cite{BTV} containing many examples of spatial logics based on contact and precontact algebras.

\section{Acknowledgement}
The final publication is available at Springer via http://dx.doi.org/10.1007/s10472-016-9499-5.

\end{document}